\newtheorem{theorem}{Theorem}[section]
\newtheorem{definition}[theorem]{Definition}
\newtheorem{lemma}[theorem]{Lemma}
\newtheorem{corollary}[theorem]{Corollary}
\newtheorem{remark}[theorem]{Remark}
\numberwithin{equation}{section}
\def\p{\partial}
\newcommand\RR{\ensuremath{\mathbb{R}}}
\newcommand\NN{\ensuremath{\mathbb{N}}}
\newcommand\PP{\ensuremath{\mathbb{P}}}
\begin{document}
\begin{center}

\textbf{\Large One-Dimensional Random Attractor and Rotation Number
}

 \vskip 0.2cm

\textbf{\Large of the Stochastic Damped Sine-Gordon
Equation
}

\vskip 1cm

{\large Zhongwei Shen$^{a,}$\footnote{The first two authors are
supported by National Natural Science Foundation of China under
Grant 10771139, and the Innovation Program of Shanghai Municipal
Education Commission under Grant 08ZZ70

$^{2}$The third author is partially supported by NSF grant
DMS-0907752

$^{*}$Corresponding Author: zhoushengfan@yahoo.com}, Shengfan
Zhou$^{a, 1, *}$, Wenxian Shen$^{b, 2}$}

\vskip 0.3cm

$^{a}$\textit{Department of Applied Mathematics, Shanghai Normal
University},

\textit{Shanghai 200234, PR China}

$^{b}$\textit{Department of Mathematics and Statistics, Auburn
University},

\textit{Auburn 36849, USA}

\vskip 1cm

\begin{minipage}[c]{13cm}

\noindent \textbf{Abstract}: This paper is devoted to the study of
the asymptotic dynamics of the stochastic damped sine-Gordon
equation with homogeneous Neumann boundary condition. It is shown
that for any positive damping and diffusion coefficients,  the
equation possesses a random attractor, and when the damping and
diffusion coefficients are sufficiently large, the random attractor
is a one-dimensional random horizontal curve regardless of the
strength of noise. Hence its dynamics is not chaotic. It is also
shown that the equation has a rotation number provided that the
damping and diffusion coefficients are sufficiently large, which
implies that the solutions tend to oscillate with the same frequency
eventually and the so called frequency locking is successful.

\vspace{5pt}

\noindent\textbf{Keywords}: Stochastic damped sine-Gordon equation;
 random horizontal curve; one-dimensional random
attractor; rotation number; frequency locking

\vspace{5pt}

\noindent\textbf{AMS Subject Classification}: 60H10, 34F05, 37H10.

\end{minipage}
\end{center}

\section{Introduction}

\qquad Let $(\Omega ,\mathcal{F},\PP)$ be a probability space ,
where
\[
\begin{split}
\Omega =\{\omega=(\omega_1,\omega_2,\dots,\omega_m)\in
{C}(\mathbb{R},\mathbb{R}^m):\omega (0)=0\},
\end{split}
\]
the Borel $\sigma $-algebra $\mathcal{F}$ on $\Omega $ is generated
by the compact open topology (see \cite{1}), and $\PP$ is the
corresponding Wiener measure on $\mathcal{F}$. Define $(\theta
_t)_{t\in\mathbb{R}}$ on $\Omega $ via
\[
\begin{split}
\theta _t\omega (\cdot )=\omega (\cdot +t)-\omega (t),\quad
t\in\mathbb{R}.
\end{split}
\]
Thus, $(\Omega,\mathcal{F},\PP,(\theta_t)_{t\in\mathbb{R}})$ is an
ergodic metric dynamical system.

Consider the following stochastic damped sine-Gordon equation with
additive noise:
\begin{equation}\label{main-eq}
du_t+\alpha du+(-K\Delta u+\sin
u)dt=fdt+\sum_{j=1}^{m}h_jdW_{j}\quad
\text{in}\,\,U\times\mathbb{R}^{+}
\end{equation}
complemented with the homogeneous Neumann boundary condition
\begin{equation}\label{main-bc}
\frac{\partial u}{\partial n}=0\quad \text{on}\,\,
\partial U\times\mathbb{R}^{+},
\end{equation}
 where $U\subset\mathbb{R}^n$ is a bounded open
set with a smooth boundary $\partial U$, $u=u(x,t)$ is a real
function of $x\in U$ and $t\geq 0$, $\alpha,\,K>0$ are damping
and diffusion coefficients, respectively, $f\in H^{1}(U)$, $h_j\in
H^2(U)$ with $\frac{\p h_j}{\p n}=0$ on $\p U$, $j=1,\dots,m$, and
$\{W_j\}_{j=1}^{m}$ are independent two-sided real-valued Wiener
processes on $(\Omega,\mathcal{F},\PP)$. We identify
$\omega(t)$ with $(W_1(t),W_2(t),\dots,W_m(t))$, i.e.,
\begin{equation*}\label{ommega-eq}
\omega(t)=(W_1(t),W_2(t),\dots,W_m(t)),\,\, t\in\mathbb{R}.
\end{equation*}

Sine-Gordon equations describe the dynamics of  continuous Josephosn junctions (see \cite{11})
and have been widely studied (see \cite{BiFeLoTr}, \cite{BiLo}, \cite{CaKoRe}, \cite{Dic},
 \cite{9}, \cite{Fan2}, \cite{GhTe}, \cite{KoWi}, \cite{11}, \cite{NaKoKa}, \cite{17}, \cite{18}, \cite{21}, \cite{WaZh1}, \cite{WaZh2}, etc.).
Various interesting dynamical scenarios such as subharmonic
bifurcation and chaotic behavior are observed in  damped and driven
sine-Gordon equations (see \cite{BiFeLoTr}, \cite{BiLo},
\cite{NaKoKa}, etc.). Note that interesting dynamics of a
dissipative system occurs in its global attractor (if it exists). It
is therefore of great importance to study the existence and the
structure/dimension of a global attractor of a damped sine-Gordon
equation.

As it is  known,  under various boundary conditions, a deterministic
damped sine-Gordon equation possesses a finite dimensional global
attractor (see \cite{GhTe, 10, 19, 21, WaZh1, WaZh2}). Moreover,
some upper bounds of the dimension of the attractor were obtained in
\cite{GhTe, 21, WaZh1, WaZh2}. In \cite{18, 19}, the authors proved
that  under Neumann boundary condition, when the damping is
sufficiently large, the dimension of the global attractor is one,
which justifies the folklore that there is no chaotic dynamics in a
strongly damped sine-Gordon equation.

Recently, the existence of attractors of stochastic damped
sine-Gordon equations has been studied by several authors (see
\cite{CaKoRe}, \cite{9}, \cite{Fan2}). For example, for the equation
\eqref{main-eq} with Dirichlet boundary condition considered in
\cite{9}, the author  proved the existence of a finite-dimensional
attractor in the random sense. However, the existing works on
stochastic damped sine-Gordon equations deal with Dirichlet boundary
conditions only. The case of a Neumann boundary condition is of
great physical interest. It is therefore important to investigate
both the existence and structure of attractors of stochastic damped
sine-Gordon equations with Neumann boundary conditions. Observe that
there is no bounded attracting sets in such case in the original
phase space due to the uncontrolled space average of the solutions,
which leads to nontrivial dynamics and also some additional
difficulties. Nevertheless, it is still expected that
\eqref{main-eq}-\eqref{main-bc} possesses an attractor in the
original phase space in proper sense.

The objective of the current paper is to provide a study on the
existence and structure of random attractors (see Definition
\ref{random-attractor-def} for the definition of random attractor)
of stochastic damped sine-Gordon equations with Neumann boundary
conditions, i.e. \eqref{main-eq}-\eqref{main-bc}. We will do so in
terms of the random dynamical system generated by
\eqref{main-eq}-\eqref{main-bc} (see Definition
\ref{random-dynamical-system-def} for the definition of random
dynamical system).

The following are the main results of this paper.

\begin{itemize}
\item[(1)] For any  $\alpha>0$ and $K>0$, \eqref{main-eq}-\eqref{main-bc}
possesses a random attractor (see Theorem
\ref{existence-attractor-thm} and Corollary
\ref{existence-attractor-cor}).

\item[(2)] When $K$ and $\alpha$
are sufficiently large, the random attractor of
\eqref{main-eq}-(\ref{main-bc}) is a one-dimensional random
horizontal curve (and hence is one dimensional) (see Theorem
\ref{one-dimension-thm} and Corollary \ref{one-dimension-cor}).

\item[(3)] When $K$ and $\alpha$
are sufficiently large, the rotation number of \eqref{main-eq}
exists (See Theorem \ref{existence-rotation-number-thm} and
Corollary \ref{existence-rotation-number-cor}).
\end{itemize}

The above results make an important contribution to the
understanding of the nonlinear dynamics of stochastic damped
sine-Gordon equations with Neumann boundary conditions. Property (1)
extends the existence result of random attractor in the Dirichlet
boundary case to the Neumann boundary case and shows that system
\eqref{main-eq}-\eqref{main-bc} is dissipative. By property (2), the
asymptotic dynamics of \eqref{main-eq}-\eqref{main-bc} with
sufficiently large $\alpha$ and $K$ is one dimensional regardless of
the strength of noise and hence is not chaotic. Observe that
$\rho\in\RR$ is called the {\it rotation number} of
\eqref{main-eq}-\eqref{main-bc} (see Definition
\ref{rotation-number-def} for detail) if for any solution $u(t,x)$
of \eqref{main-eq}-\eqref{main-bc} and any $x\in U$, the limit
$\lim_{t\to\infty}\frac{u(t,x)}{t}$ exists almost surely  and
\begin{equation*}
\lim_{t\to\infty}\frac{u(t,x)}{t}=\rho \quad {\rm for}\quad a.e. \quad \omega\in\Omega.
\end{equation*}
Property (3) then shows that all the solutions of
\eqref{main-eq}-\eqref{main-bc} tend to oscillate with the same
frequency eventually almost surely and hence frequency locking is
successful in \eqref{main-eq}-\eqref{main-bc} provided that $\alpha$
and $K$ are sufficiently large.

We remark that the results in the current paper also hold for
stochastic damped sine-Gordon equations with periodic boundary
conditions.

It should be pointed out that the dynamical behavior
of variety of systems of the form \eqref{main-eq} have been
studied in \cite{14,15,16,17} for ordinary differential equations,
\cite{18,19} for partial differential equations and \cite{3,13,20}
for stochastic (random) ordinary differential equations. In above
literatures, two main aspects considered are the structure of the
attractor and the phenomenon of frequency locking. For example, in
\cite{20}, the authors studied a class of nonlinear noisy
oscillators. They proved the existence of a random attractor which
is a family of horizontal curves and the existence of a rotation
number which implies the frequency locking.

The rest of the paper is organized as follows. In section 2, we
present  some basic concepts and properties for general random
dynamical systems. In section 3, we provide some basic settings
about \eqref{main-eq}-\eqref{main-bc} and show that it generates a
random dynamical system in proper function space. We prove in
section 4 the existence of a unique random attractor of the random
dynamical system $\phi$ generated by \eqref{main-eq}-\eqref{main-bc}
for any $\alpha,K>0$.  We show in section 5 that the random
attractor of $\phi$ is a random horizontal curve provided that
$\alpha$ and $K$ are sufficiently large. In section 6, we prove the
existence of a rotation number of \eqref{main-eq}-\eqref{main-bc}
provided that $\alpha$ and $K$ are sufficiently large.
\section{General Random Dynamical Systems}

\qquad In this section, we collect some basic knowledge about
general random dynamical systems (see \cite{1,5} for details). Let
$(X,\|\cdot\|_{X})$ be a separable Hilbert space with Borel
$\sigma$-algebra $\mathcal{B}(X)$ and
$(\Omega,\mathcal{F},\PP,(\theta_t)_{t\in\mathbb{R}})$ be the
ergodic metric dynamical system mentioned in section 1.

\begin{definition}\label{random-dynamical-system-def}
A continuous random dynamical system over
$(\Omega,\mathcal{F},\PP,(\theta_t)_{t\in\mathbb{R}})$ is a
$(\mathcal{B}(\mathbb{R}^+)\times\mathcal{F}\times\mathcal{B}(X),\mathcal{B}(X))$-measurable
mapping
\begin{equation*}
\varphi:\mathbb{R}^{+}\times\Omega\times X\rightarrow
X,\quad (t,\omega,u)\mapsto\varphi(t,\omega,u)
\end{equation*}
such that the following properties hold
\begin{itemize}
\item[(1)] $\varphi(0,\omega,u)=u$ for all $\omega\in\Omega$ and $u\in X$;

\item[(2)]
$\varphi(t+s,\omega,\cdot)=\varphi(t,\theta_s\omega,\varphi(s,\omega,\cdot))$
for all $s,t\geq0$ and $\omega\in\Omega$;

\item[(3)] $\varphi$ is continuous in $t$ and $u$.
\end{itemize}
\end{definition}

For given $u\in X$ and $E,F\subset X$, we define
\begin{equation*}
d(u,E)=\inf_{v\in E}\|u-v\|_X
\end{equation*}
and
\begin{equation*}
d_H(E,F)=\sup_{u\in E}d(u,F).
\end{equation*}
$d_H(E,F)$ is called the {\it Hausdorff semi-distance} from $E$ to $F$.

\begin{definition}\label{random-attractor-def}
\begin{itemize}
\item[(1)] A set-valued mapping $\omega\mapsto
D(\omega):\Omega\rightarrow 2^{X}$ is said to be a {\rm random set}
if the mapping $\omega\mapsto d(u,D(\omega))$ is measurable for any
$u\in X$. If $D(\omega)$ is also closed (compact) for each
$\omega\in\Omega$, the mapping $\omega\mapsto D(\omega)$ is called a
{\rm random closed (compact) set}. A random set $\omega\mapsto
D(\omega)$ is said to be {\rm bounded} if there exist $u_0\in X$ and
a random variable $R(\omega)>0$ such that
\begin{equation*}
D(\omega)\subset\{u\in X:\|u-u_0\|_{X}\leq
R(\omega)\}\quad\text{for all}\quad \omega\in\Omega.
\end{equation*}

\item[(2)] A random set $\omega\mapsto D(\omega)$ is called {\rm
tempered} provided for $\PP$-a.s.$\omega\in\Omega$,
\begin{equation*}
\lim\limits_{t\rightarrow\infty}e^{-\beta t}\sup\{\|b\|_{X}:b\in
D(\theta_{-t}\omega)\}=0\quad\text{for all}\quad\beta>0.
\end{equation*}

\item[(3)] A random set $\omega\mapsto B(\omega)$ is said to be a {\rm
random absorbing set} if for any tempered random set $\omega\mapsto
D(\omega)$, there exists $t_0(\omega)$ such that
\begin{equation*}
\varphi(t,\theta_{-t}\omega,D(\theta_{-t}\omega))\subset
B(\omega)\quad\text{for all}\quad t\geq
t_0(\omega),\,\,\omega\in\Omega.
\end{equation*}

\item[(4)] A random set $\omega\mapsto B_1(\omega)$ is said to be a
{\rm random attracting set} if for any tempered random set
$\omega\mapsto D(\omega)$, we have
\begin{equation*}
\lim_{t\rightarrow\infty}d_{H}(\varphi(t,\theta_{-t}\omega,D(\theta_{-t}\omega),B_1(\omega))=0\quad\text{for
all}\quad\omega\in\Omega.
\end{equation*}

\item[(5)] A random compact set $\omega\mapsto A(\omega)$ is said to be
a random attractor if it is an random attracting set and
$\varphi(t,\omega,A(\omega))=A(\theta_t\omega)$ for all
$\omega\in\Omega$ and $t\geq 0$.
\end{itemize}
\end{definition}

\begin{theorem}\label{existence-attractor-thm-rds}
Let $\varphi$ be a continuous random dynamical system over
$(\Omega,\mathcal{F},\PP,(\theta_t)_{t\in\mathbb{R}})$.   If there
is a  tempered random compact attracting set $\omega\mapsto
B_1(\omega)$ of $\varphi$, then $\omega\mapsto A(\omega)$ is a
random attractor of $\varphi$, where
\begin{equation*}
A(\omega)=\bigcap_{t>0}\overline{\bigcup_{\tau\geq
t}\varphi(\tau,\theta_{-\tau}\omega,B_{1}(\theta_{-\tau}\omega))},\quad\omega\in\Omega.
\end{equation*}
Moreover, $\omega\mapsto A(\omega)$ is the unique random attractor
of $\phi$.
\end{theorem}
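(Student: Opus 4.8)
The plan is to recognize $A(\omega)$ as the pullback $\omega$-limit set of the attracting set $B_1$, and to base everything on the elementary characterization that $y\in A(\omega)$ if and only if there exist sequences $t_n\to\infty$ and $x_n\in B_1(\theta_{-t_n}\omega)$ with $\varphi(t_n,\theta_{-t_n}\omega,x_n)\to y$; this follows routinely from the definition of $A(\omega)$ as an intersection of closures in the metric space $X$. Granting it, I would establish in turn that $\omega\mapsto A(\omega)$ is a random compact set, that it is strictly invariant, that it attracts every tempered random set, and that it is unique. For compactness, applying the attracting property of $B_1$ to $B_1$ itself shows that any limit $y$ as above satisfies $d(y,B_1(\omega))=0$, so $A(\omega)\subseteq B_1(\omega)$; being an intersection of closed sets contained in the compact set $B_1(\omega)$, it is compact, and it is nonempty because for $x_n\in B_1(\theta_{-n}\omega)$ the points $\varphi(n,\theta_{-n}\omega,x_n)$ lie within vanishing distance of the compact $B_1(\omega)$, so a convergent subsequence exists whose limit lies in $A(\omega)$ by the characterization. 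Measurability of $\omega\mapsto A(\omega)$ follows from joint measurability of $\varphi$, the fact that $B_1$ is a random set, and separability of $X$, as in \cite{1,5}.

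For invariance, I prove both inclusions. Given $y\in A(\omega)$ realized by $\varphi(t_n,\theta_{-t_n}\omega,x_n)\to y$, the cocycle identity (Definition \ref{random-dynamical-system-def}(2)) rewrites $\varphi(t,\omega,\varphi(t_n,\theta_{-t_n}\omega,x_n))$ as $\varphi(t+t_n,\theta_{-(t+t_n)}(\theta_t\omega),x_n)$ with $x_n\in B_1(\theta_{-(t+t_n)}(\theta_t\omega))$ and $t+t_n\to\infty$; continuity in $u$ (Definition \ref{random-dynamical-system-def}(3)) then gives $\varphi(t,\omega,y)\in A(\theta_t\omega)$, so $\varphi(t,\omega,A(\omega))\subseteq A(\theta_t\omega)$. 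Conversely, given $z\in A(\theta_t\omega)$ realized by times $s_n\to\infty$, set $\tau_n=s_n-t$; the intermediate states lie near the compact $B_1(\omega)$, so a subsequence converges to some $y\in A(\omega)$, and the cocycle identity with continuity yields $\varphi(t,\omega,y)=z$. Hence $A(\theta_t\omega)\subseteq\varphi(t,\omega,A(\omega))$ and equality holds.

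The crux is the attracting property, and I expect this to be the main obstacle, because a one-step estimate only shows that pullback orbits approach $B_1$, not $A$. I would first prove the special case that $A$ attracts $B_1$: otherwise there are $t_n\to\infty$ and $x_n\in B_1(\theta_{-t_n}\omega)$ with $\varphi(t_n,\theta_{-t_n}\omega,x_n)$ bounded away from $A(\omega)$, yet the nonemptiness argument extracts a convergent subsequence whose limit lies in $A(\omega)$ by the characterization, a contradiction. For a general tempered random set $D$, suppose $d(\varphi(t_n,\theta_{-t_n}\omega,b_n),A(\omega))\ge\epsilon$ with $b_n\in D(\theta_{-t_n}\omega)$. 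The key device is a two-time-scale splitting with a \emph{fixed} intermediate time $T$: using that $A$ attracts $B_1$, choose $T$ with $d_H(\varphi(T,\theta_{-T}\omega,B_1(\theta_{-T}\omega)),A(\omega))<\epsilon/2$, and write $\varphi(t_n,\theta_{-t_n}\omega,b_n)=\varphi(T,\theta_{-T}\omega,z_n)$ where $z_n=\varphi(t_n-T,\theta_{-(t_n-T)}(\theta_{-T}\omega),b_n)$. Since $T$ is fixed, the attracting property of $B_1$ at the single base point $\theta_{-T}\omega$ forces $d(z_n,B_1(\theta_{-T}\omega))\to0$, so a subsequence gives $z_n\to z_*\in B_1(\theta_{-T}\omega)$; continuity of $\varphi(T,\theta_{-T}\omega,\cdot)$ and the choice of $T$ then yield $d(\varphi(t_n,\theta_{-t_n}\omega,b_n),A(\omega))<\epsilon$ for large $n$, a contradiction. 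Fixing $T$ before passing to the limit is exactly what circumvents the moving-base-point difficulty inherent in pullback convergence.

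Finally, uniqueness. If $A'$ is another random attractor, then $A$ is tempered (being contained in the tempered set $B_1$), so $A'$ attracts $A$; since invariance gives $\varphi(t,\theta_{-t}\omega,A(\theta_{-t}\omega))=A(\omega)$, the quantity $d_H(A(\omega),A'(\omega))$ is independent of $t$ yet tends to $0$, hence vanishes, and as $A'(\omega)$ is closed this gives $A(\omega)\subseteq A'(\omega)$. The reverse inclusion is symmetric: using temperedness of $A'$ so that the already-constructed attractor $A$ attracts $A'$, together with invariance of $A'$, one obtains $d_H(A'(\omega),A(\omega))=0$ and thus $A'(\omega)\subseteq A(\omega)$. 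Therefore $A'(\omega)=A(\omega)$ for all $\omega$, which establishes uniqueness.
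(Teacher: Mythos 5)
Your proposal is correct, but note that the paper itself offers no argument at all for this theorem: its entire ``proof'' is the citation \cite[Theorem 1.8.1]{5}. What you have written is in effect a self-contained reconstruction of that cited proof --- the standard pullback $\omega$-limit-set argument of Crauel--Flandoli/Chueshov. Your key steps all check out: the sequential characterization of $A(\omega)$; the inclusion $A(\omega)\subseteq B_1(\omega)$ obtained by letting the tempered set $B_1$ attract itself (which also yields compactness and nonemptiness); both directions of invariance via the cocycle identity, with the reverse inclusion correctly extracting a convergent subsequence of the intermediate states $\varphi(\tau_n,\theta_{-\tau_n}\omega,x_n)$ near the compact $B_1(\omega)$; and, most importantly, the fixed-intermediate-time splitting $\varphi(t_n,\theta_{-t_n}\omega,b_n)=\varphi(T,\theta_{-T}\omega,z_n)$, which is precisely the device the literature uses to upgrade attraction from $B_1$ to $A$ and which, as you say, circumvents the moving-base-point difficulty of pullback convergence by freezing the fiber $\theta_{-T}\omega$ before letting $n\to\infty$.

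Two caveats worth flagging. First, measurability of $\omega\mapsto A(\omega)$ is asserted by citation rather than proved; this is defensible given that the paper defers the whole theorem, but a self-contained version would note that continuity of $\varphi$ in $t$ lets one replace the union over $\tau\geq t$ by the union over rational $\tau$, and that a countable dense selection of the random compact set $B_1$ (Castaing representation) renders $\omega\mapsto d\bigl(x,\varphi(\tau,\theta_{-\tau}\omega,B_1(\theta_{-\tau}\omega))\bigr)$ measurable, so $d(x,A(\omega))$ is a countable supremum of countable infima of measurable functions. Second, in the uniqueness step you invoke ``temperedness of $A'$,'' but the paper's Definition \ref{random-attractor-def}(5) does not require a random attractor to be tempered; without that hypothesis your argument yields only $A(\omega)\subseteq A'(\omega)$, and the reverse inclusion genuinely needs $A'$ to lie in the tempered universe so that $A$ attracts it. This matches the cited source, where attractors by definition belong to the tempered class, so your proof establishes uniqueness among tempered random attractors --- the intended reading of the theorem, but the restriction deserves explicit mention.
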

\begin{proof}
See \cite[Theorem 1.8.1]{5}.
\end{proof}

\section{Basic Settings}

\qquad In this section, we give some basic settings about
(1.1)-(1.2) and show that it generates a random dynamical system.
Define an unbounded operator
\begin{equation}\label{generator-eq}
A:D(A)\equiv\Big\{u\in
H^{2}(U):\frac{\partial u}{\partial n}\Big|_{\partial
U}=0\Big\}\rightarrow L^{2}(U),\,\,u\mapsto-K\Delta u.
\end{equation}
Clearly, $A$ is nonnegative definite and self-adjoint. Its spectral
set consists of only nonnegative eigenvalues, denoted by
$\lambda_i$, satisfying
\begin{equation}\label{lambdas-eq}
0=\lambda_0<\lambda_1\leq\lambda_2\leq\cdots\leq\lambda_i\leq\cdots,\,\,(\lambda_i\rightarrow+\infty\quad\text{as}
\quad i\rightarrow\infty).
\end{equation}
It is well known that $-A$ generates an analytic semigroup of
bounded linear operators $\{e^{-At}\}_{t\geq0}$ on $L^2(U)$ (and
$H^1(U)$). Let $E=H^{1}(U)\times L^{2}(U)$, endowed with the usual
norm
\begin{equation}\label{2a}
\|Y\|_{H^{1}\times L^{2}}=\big(\|\nabla
u\|^{2}+\|u\|^{2}+\|v\|^{2}\big)^{\frac{1}{2}}\quad\text{for}\quad
Y=(u,v)^{\top},
\end{equation}
where $\|\cdot\|$ denotes the usual norm in $L^{2}(U)$ and $\top$
stands for the transposition.

The existence of solutions to problem
\eqref{main-eq}-\eqref{main-bc} follows from \cite{7}. We next
transform the problem \eqref{main-eq}-\eqref{main-bc} to a
deterministic system with a random parameter, and then show that it
generates a random dynamical system.

Let $(\Omega,\mathcal{F},\PP,(\theta_t)_{t\in\mathbb{R}})$
be the ergodic  metric dynamical system in section 1.
 For $j\in\{1,2,\dots,m\}$,
consider the one-dimensional Ornstein-Uhlenbeck equation
\begin{equation*}
dz_j+z_jdt=dW_j(t).
\end{equation*}
Its unique stationary solution is given by
\begin{equation*}
z_j(\theta_t\omega_j)=\int_{-\infty}^{0}e^{s}(\theta_t\omega_j)(s)ds
=-\int_{-\infty}^{0}e^{s}\omega_j(s+t)ds+\omega_j(t),\quad
t\in\mathbb{R}.
\end{equation*}
Note that the random variable $|z_j(\omega_j)|$ is tempered and
the mapping $t\mapsto z_j(\theta_t\omega_j)$ is $\PP$-a.s.
continuous (see \cite{2,8}). More precisely, there is a
$\theta_t$-invariant $\Omega_0\subset\Omega$ with
$\PP(\Omega_0)=1$ such that $t\mapsto z_j(\theta_t\omega_j)$ is
continuous for $\omega\in\Omega_0$ and $j=1,2,\cdots,m$.
 Putting
$z(\theta_t\omega)=\sum_{j=1}^{m}h_jz_j(\theta_t\omega_j)$, which
solves $dz+zdt=\sum_{j=1}^{m}h_jdW_j$.

Now, let $v=u_t-z(\theta_t\omega)$ and take the functional space $E$
into consideration, we obtain the equivalent system of
\eqref{main-eq}-\eqref{main-bc},
\begin{equation} \label{2b}
\left\{ \begin{aligned}
&\dot{u}=v+z(\theta_t\omega),\\
&\dot{v}=-Au-\alpha v-\sin u+f+(1-\alpha)z(\theta_t\omega).
\end{aligned} \right.
\end{equation}
Let $Y=(u,v)^{\top}$, $C=\begin{pmatrix}0&I\\-A&-\alpha
I\end{pmatrix}$, $F(\theta_t\omega,Y)=(z(\theta_t\omega),-\sin
u+f+(1-\alpha)z(\theta_t\omega))^{\top}$, problem (\ref{2b}) has the
following simple matrix form
\begin{equation}\label{2c}
\dot{Y}=CY+F(\theta_t\omega,Y).
\end{equation}
We will consider \eqref{2b} or \eqref{2c} for $\omega\in\Omega_0$
and write $\Omega_0$ as $\Omega$ from now on.

Clearly, $C$ is an unbounded closed operator on $E$ with domain
$D(C)=D(A)\times H^{1}(U)$. It is not difficult to check that the
spectral set of $C$ consists of only following points \cite{19}
\begin{equation*}
\mu_{i}^{\pm}=\frac{-\alpha\pm\sqrt{\alpha^2-4\lambda_i}}{2},\quad
i=0,1,2,\dots
\end{equation*}
and $C$ generates a $C_0$-semigroup of bounded linear operators
$\{e^{Ct}\}_{t\geq0}$ on $E$. Furthermore, let
$F^{\omega}(t,Y):=F(\theta_t\omega,Y)$, it is easy to see that
$F^{\omega}(\cdot,\cdot):\mathbb{R}^{+}\times E\rightarrow E$ is
continuous in $t$ and globally Lipschitz continuous in $Y$ for each
$\omega\in\Omega$. By the classical theory concerning the existence
and uniqueness of the solutions, we obtain (see \cite{12,21})

\begin{theorem}
Consider \eqref{2c}. For each $\omega\in\Omega$ and each $Y_0\in E$,
there exists a unique function $Y(\cdot,\omega,Y_0)\in
C([0,+\infty);E)$ such that $Y(0,\omega,Y_0)=Y_0$ and
$Y(t,\omega,Y_0)$ satisfies the integral equation
\begin{equation}\label{2d}
Y(t,\omega,Y_0)=e^{Ct}Y_0+\int_{0}^{t}e^{C(t-s)}F(\theta_s\omega,Y(s,\omega,Y_0))ds.
\end{equation}
Furthermore, if $Y_0\in D(C)$, there exists $Y(\cdot,\omega,Y_0)\in
C([0,+\infty);D(C))\cap C^{1}((\mathbb{R}^{+},+\infty);E)$ which
satisfies \eqref{2d} and $Y(t,\omega,Y_0)$ is jointly continuous in
$t$, $Y_0$, and is measurable in $\omega$. Then,
$Y:\mathbb{R}^{+}\times\Omega\times E\rightarrow
E\,(\text{or}\,\,\mathbb{R}^{+}\times\Omega\times D(C)\rightarrow
D(C))$ is a continuous random dynamical system.
\end{theorem}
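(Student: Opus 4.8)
The plan is to obtain $Y(\cdot,\omega,Y_0)$ as the unique fixed point of the right-hand side of \eqref{2d} and then to read the random dynamical system properties off from uniqueness. Fix $\omega\in\Omega$ and $Y_0\in E$ and, on $C([0,T];E)$, define
\[
(\Gamma Y)(t)=e^{Ct}Y_0+\int_0^t e^{C(t-s)}F(\theta_s\omega,Y(s))\,ds .
\]
Since $C$ generates a $C_0$-semigroup there are constants $M\ge1$ and $\beta\in\RR$ with $\|e^{Ct}\|_E\le Me^{\beta t}$, and since $F^{\omega}$ is globally Lipschitz in $Y$ with some constant $L$ (uniformly in $t\in[0,T]$), one estimates
\[
\|\Gamma Y_1-\Gamma Y_2\|_{C([0,T];E)}\le MLTe^{|\beta|T}\,\|Y_1-Y_2\|_{C([0,T];E)} .
\]
Choosing $T$ so small that $MLTe^{|\beta|T}<1$ makes $\Gamma$ a contraction, and the Banach fixed point theorem supplies a unique local solution of \eqref{2d}.

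The decisive feature for global existence is that this $T$ depends only on $M,L,\beta$, not on $\|Y_0\|_E$; hence the construction can be iterated on $[T,2T],[2T,3T],\dots$ to produce $Y(\cdot,\omega,Y_0)\in C([0,+\infty);E)$ (a Gronwall estimate on $\|Y(t)\|_E$ equally rules out blow-up). For the regularity claim I would exploit the structure of $F$: because each $h_j\in H^2(U)$ with $\partial h_j/\partial n=0$ one has $z(\theta_s\omega)\in D(A)$, and because $\sin u\in H^1(U)$ whenever $u\in H^1(U)$, the forcing $g(s):=F(\theta_s\omega,Y(s))$ actually takes values in $D(C)=D(A)\times H^1(U)$, with $s\mapsto Cg(s)$ continuous in $E$ (using the continuity of $s\mapsto z(\theta_s\omega)$ into $H^2$ and of $s\mapsto u(s)$ into $H^1$). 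The standard regularity theorem for inhomogeneous linear equations driven by a $C_0$-semigroup with forcing valued in the domain then makes $\int_0^t e^{C(t-s)}g(s)\,ds$ a classical solution, and adding $e^{Ct}Y_0$ with $Y_0\in D(C)$ gives $Y\in C([0,+\infty);D(C))\cap C^1((0,+\infty);E)$; see \cite{12,21}.

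It remains to verify the axioms of a continuous random dynamical system. Axiom $(1)$ is immediate from \eqref{2d} at $t=0$. For the cocycle law $(2)$ I would argue by uniqueness: fixing $s\ge0$ and setting $\tilde Y(t)=Y(t,\theta_s\omega,Y(s,\omega,Y_0))$ and $\hat Y(t)=Y(t+s,\omega,Y_0)$, the relation $F(\theta_{s+r}\omega,\cdot)=F(\theta_r(\theta_s\omega),\cdot)$ together with $e^{C(t+s)}=e^{Ct}e^{Cs}$ shows that both functions solve \eqref{2d} with datum $Y(s,\omega,Y_0)$, so $\tilde Y=\hat Y$. Joint continuity in $(t,Y_0)$ (axiom $(3)$) follows from a Gronwall estimate applied to the difference of solutions issued from nearby data.

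The step demanding the most care, and the one I expect to be the main obstacle, is measurability in $\omega$. I would obtain it through the Picard iterates $Y^{(0)}\equiv Y_0$, $Y^{(n+1)}=\Gamma Y^{(n)}$: each is measurable in $\omega$ because $s\mapsto z(\theta_s\omega)$ is ($\PP$-a.s.) continuous while $\omega\mapsto z(\theta_s\omega)$ is measurable, and the semigroup and time integration are measurable operations; measurability is then preserved in the uniform (on compact time intervals) limit $Y^{(n)}\to Y$. Combining this with the joint continuity in $(t,Y_0)$ yields the required joint measurability, and hence $Y$ is a continuous random dynamical system on $E$ (and on $D(C)$). The genuinely random-dynamical ingredient beyond the deterministic theory is the bookkeeping of the shift $\theta_s$ in the cocycle identity, which is exactly what the stationarity of the Ornstein--Uhlenbeck process $z$ provides.
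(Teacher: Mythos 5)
Your proposal is correct and follows essentially the same route as the paper, which does not write out a proof at all but simply invokes ``the classical theory concerning the existence and uniqueness of the solutions'' with references to Pazy and Temam; your fixed-point construction, bootstrapping of regularity for $Y_0\in D(C)$, uniqueness-based cocycle verification, and measurability via Picard iterates are exactly the standard details those citations delegate. The only point worth flagging is cosmetic: the paper's $C^{1}((\mathbb{R}^{+},+\infty);E)$ is evidently a typo for the $C^{1}((0,+\infty);E)$ you prove.
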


We now define a mapping $\phi:\mathbb{R}^{+}\times\Omega\times
E\rightarrow E\,(\text{or}\,\,\mathbb{R}^{+}\times\Omega\times
D(C)\rightarrow D(C))$ by
\begin{equation}\label{2e}
\phi(t,\omega,\phi_0)=Y(t,\omega,Y_0(\omega))+(0,z(\theta_t\omega))^{\top},
\end{equation}
where $\phi_0=(u_0,u_1)^{\top}$ and
$Y_0(\omega)=(u_0,u_1-z(\omega))^{\top}$. It is easy to see that
$\phi$ is a continuous random dynamical system associated with the
problem \eqref{main-eq}-\eqref{main-bc} on $E\,(\text{or}\,\,D(C))$.
We next show a useful property of just defined random dynamical
systems.

\begin{lemma}\label{periodicity-lm}
Suppose that $p_0=(2\pi,0)^{\top}$. The random dynamical system $Y$
defined in \eqref{2d} is $p_0$-translation invariant in the sense
that
\begin{equation*}
Y(t,\omega,Y_0+p_0)=Y(t,\omega,Y_0)+p_0,\quad
t\geq0,\,\,\omega\in\Omega,\,\,Y_0\in E.
\end{equation*}
\end{lemma}
\begin{proof}
Since $Cp_0=0$ and $F(t,\omega,Y)$ is $p_0$-periodic in $Y$,
$Y(t,\omega,Y_0)+p_0$ is a solution of (\ref{2c}) with initial data
$Y_0+p_0$. Thus, $Y(t,\omega,Y_0)+p_0=Y(t,\omega,Y_0+p_0)$.
\end{proof}

Note that $\mu_{1}^{+}\rightarrow0$ as $\alpha\rightarrow+\infty$,
which will cause some difficulty. In order to overcome it, we
introduce a new norm which is equivalent to the usual norm
$\|\cdot\|_{H^{1}\times L^{2}}$ on $E$ in \eqref{2a}. Here, we only
collect some results about the new norm (see \cite{19} for details).
Since $C$ has at least two real eigenvalues $0$ and $-\alpha$ with
corresponding eigenvectors $\eta_0=(1,0)^{\top}$ and
$\eta_{-1}=(1,-\alpha)^{\top}$, let $E_1=\text{span}\{\eta_0\}$,
$E_{-1}=\text{span}\{\eta_{-1}\}$ and $E_{11}=E_{1}+E_{-1}$. For any
$u\in L^{2}(U)$, define $\bar{u}=\frac{1}{|U|}\int_{U}u(x)dx$, i.e.,
the spatial average of $u$, let $\dot{L}^{2}(U)=\{u\in
L^{2}(U):\bar{u}=0\}$, $\dot{H}^{1}(U)=H^{1}(U)\cap \dot{L}^{2}(U)$
and $E_{22}=\dot{H}^{1}(U)\times\dot{L}^{2}(U)$. It's easy to see
that $E=E_{11}\oplus E_{22}$ and $E_1$ is invariant under $C$. We
now define two bilinear forms on $E_{11}$ and $E_{22}$ respectively.
For $Y_i=(u_i,v_i)^{\top}\in E_{11}$, $i=1,2$, let
\begin{equation}\label{2f}
\langle Y_1,Y_2\rangle_{E_{11}}=\frac{\alpha^2}{4}\langle
u_1,u_2\rangle+\langle
\frac{\alpha}{2}u_1+v_1,\frac{\alpha}{2}u_2+v_2\rangle,
\end{equation}
where $\langle\cdot,\cdot\rangle$ denotes the inner product on
$L^{2}(U)$, and for $Y_i=(u_i,v_i)^{\top}\in E_{22},\,i=1,2$, let
\begin{equation}\label{2g}
\langle Y_1,Y_2\rangle_{E_{22}}=\langle
A^{\frac{1}{2}}u_1,A^{\frac{1}{2}}
u_2\rangle+(\frac{\alpha^2}{4}-\delta\lambda_1)\langle
u_1,u_2\rangle+\langle
\frac{\alpha}{2}u_1+v_1,\frac{\alpha}{2}u_2+v_2\rangle,
\end{equation}
where $A^{\frac{1}{2}}=\sqrt{K}\nabla$ (see \eqref{generator-eq} for
the definition of $A$) and $\delta\in(0,1]$. By the Poincar\'{e}
inequality
\begin{equation*}
\|A^{\frac{1}{2}}u\|^2\geq\lambda_1\|u\|^2,\quad\forall
u\in\dot{H}^{1}(U),
\end{equation*}
\eqref{2g} is then positive definite. Note that for any $Y\in E$,
$\bar{Y}=\int_{U}Y(x)dx\in E_{11}$ and $Y-\bar{Y}\in E_{22}$, thus
we define
\begin{equation}\label{2h}
\langle Y_1,Y_2\rangle_{E}=\langle
\bar{Y}_1,\bar{Y}_2\rangle_{E_{11}}+\langle
Y_1-\bar{Y}_1,Y_2-\bar{Y}_2\rangle_{E_{22}}\quad\text{for}\quad
Y_1,Y_2\in E.
\end{equation}

\begin{lemma}[\cite{19}]
\begin{itemize}
\item[(1)] \eqref{2f} and \eqref{2g} define inner products on $E_{11}$
and $E_{22}$, respectively.

\item[(2)] \eqref{2h} defines an inner product on $E$, and the
corresponding norm $\|\cdot\|_{E}$ is equivalent to the usual norm
$\|\cdot\|_{H^1\times L^2}$ in \eqref{2a}, where
\begin{equation}
\begin{split}
\|Y\|_{E}&=\Big(\frac{\alpha^2}{4}\|u\|^2+\|\frac{\alpha}{2}u+v\|^2+\|A^{\frac{1}{2}}(u-\bar{u})\|^2-\delta\lambda_1\|u-\bar{u}\|^2\Big)^{\frac{1}{2}}\\
&=\Big(\frac{\alpha^2}{4}\|u\|^2+\|\frac{\alpha}{2}u+v\|^2+\|A^{\frac{1}{2}}u\|^2-\delta\lambda_1\|u-\bar{u}\|^2\Big)^{\frac{1}{2}}
\end{split}\label{2i}
\end{equation}
for $Y=(u,v)^{\top}\in E$.

\item[(3)] In terms of the inner product $\langle\cdot,\cdot\rangle_{E}$,
$E_1$ and $E_{11}$ are orthogonal to $E_{-1}$ and $E_{22}$,
respectively.

\item[(4)] In terms of the norm $\|\cdot\|_{E}$, the Lipschitz constant
$L_{F}$ of $F$ in \eqref{2c} satisfies
\begin{equation}
\label{lipschitz-constant-f}
L_{F}\leq\frac{2}{\alpha}.
\end{equation}
\end{itemize}
\end{lemma}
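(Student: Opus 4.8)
The plan is to establish the four assertions in order, using the splitting $E=E_{11}\oplus E_{22}$ together with the Poincar\'e inequality $\|A^{1/2}u\|^2\ge\lambda_1\|u\|^2$ on $\dot H^1(U)$ as the only nontrivial analytic ingredient. For part (1), bilinearity and symmetry of \eqref{2f} and \eqref{2g} are immediate, so everything reduces to positive definiteness. On $E_{11}$ I would observe that $\langle Y,Y\rangle_{E_{11}}=\frac{\alpha^2}{4}\|u\|^2+\|\frac{\alpha}{2}u+v\|^2$ is a sum of squares vanishing only when $u=0$ and $\frac{\alpha}{2}u+v=0$, i.e. $Y=0$ (here $\alpha>0$ is essential). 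On $E_{22}$ the only dangerous contribution is the possibly negative $(\frac{\alpha^2}{4}-\delta\lambda_1)\|u\|^2$; I would absorb it into $\|A^{1/2}u\|^2$ via Poincar\'e, obtaining $\langle Y,Y\rangle_{E_{22}}\ge\big((1-\delta)\lambda_1+\frac{\alpha^2}{4}\big)\|u\|^2+\|\frac{\alpha}{2}u+v\|^2\ge0$, with equality forcing $Y=0$ since $\delta\le1$.

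For part (2), I would first check that $Y=\bar Y+(Y-\bar Y)$ genuinely lands in $E_{11}\oplus E_{22}$: since $E_{11}=\mathrm{span}\{\eta_0,\eta_{-1}\}$ is exactly the space of constant vectors and $Y-\bar Y$ has zero spatial average, this decomposition is the one used in \eqref{2h}, and positive definiteness of \eqref{2h} is inherited from part (1). To reach the explicit formula \eqref{2i} I would expand \eqref{2h} and use the $L^2$-orthogonality of a function and its mean, giving $\|u\|^2=\|\bar u\|^2+\|u-\bar u\|^2$ and $\|\frac{\alpha}{2}u+v\|^2=\|\frac{\alpha}{2}\bar u+\bar v\|^2+\|\frac{\alpha}{2}(u-\bar u)+(v-\bar v)\|^2$; collecting terms yields the first line, and $A^{1/2}\bar u=0$ yields the second. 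The main obstacle is the norm equivalence, and precisely its lower bound, because of the negative term $-\delta\lambda_1\|u-\bar u\|^2$. I would resolve it by expanding $u-\bar u=\sum_{i\ge1}c_ie_i$ in the $L^2$-orthonormal eigenbasis $\{e_i\}$ of $A$, so that $\|A^{1/2}u\|^2-\delta\lambda_1\|u-\bar u\|^2=\sum_{i\ge1}(\lambda_i-\delta\lambda_1)c_i^2$, and checking that $\frac{\alpha^2}{4}+\lambda_i-\delta\lambda_1\ge\epsilon\lambda_i$ for a fixed $\epsilon>0$ and all $i\ge1$ (using $\lambda_i\ge\lambda_1$ and $\delta\le1$). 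Combined with $\|A^{1/2}u\|^2=K\|\nabla u\|^2$, the trivial bound $\|v\|\le\|\frac{\alpha}{2}u+v\|+\frac{\alpha}{2}\|u\|$, and $\frac{\alpha^2}{4}\|u\|^2\le\|Y\|_E^2$, this controls each of $\|\nabla u\|^2$, $\|u\|^2$, $\|v\|^2$ and gives $c_1\|Y\|_{H^1\times L^2}^2\le\|Y\|_E^2$. The upper bound is easy, since dropping the negative term only increases the norm.

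For part (3), the orthogonality of $E_{11}$ and $E_{22}$ is built into \eqref{2h}: if $Y_1\in E_{11}$ and $Y_2\in E_{22}$ then $\bar Y_1=Y_1$ and $Y_2-\bar Y_2=Y_2$, so the cross terms vanish and $\langle Y_1,Y_2\rangle_E=0$. The orthogonality of $E_1$ and $E_{-1}$ I would confirm by the single computation $\langle\eta_0,\eta_{-1}\rangle_{E_{11}}=\frac{\alpha^2}{4}|U|+\langle\frac{\alpha}{2},-\frac{\alpha}{2}\rangle=\frac{\alpha^2}{4}|U|-\frac{\alpha^2}{4}|U|=0$, using $\eta_0=(1,0)^\top$ and $\eta_{-1}=(1,-\alpha)^\top$.

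Finally, for part (4) I would use that $F$ depends on $Y=(u,v)^\top$ only through $-\sin u$ in its second component, so $F(\theta_t\omega,Y_1)-F(\theta_t\omega,Y_2)=(0,\sin u_2-\sin u_1)^\top$. Evaluating \eqref{2i} at $(0,w)^\top$ gives $\|(0,w)^\top\|_E=\|w\|$, whence the $E$-distance between the images equals $\|\sin u_2-\sin u_1\|\le\|u_2-u_1\|$ by the $1$-Lipschitz continuity of $\sin$. Since \eqref{2i} also gives $\frac{\alpha^2}{4}\|u_2-u_1\|^2\le\|Y_2-Y_1\|_E^2$, I conclude $\|F(\theta_t\omega,Y_1)-F(\theta_t\omega,Y_2)\|_E\le\frac{2}{\alpha}\|Y_1-Y_2\|_E$, i.e. $L_F\le\frac{2}{\alpha}$.
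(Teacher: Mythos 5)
Your proposal is correct. Note that the paper itself gives no proof of this lemma: it is stated with the citation tag [19] and the surrounding text says only that these facts about the new norm are ``collected'' from that reference (the paper does record the one-line observation that the Poincar\'e inequality makes \eqref{2g} positive definite, which matches your absorption step). So there is no in-paper argument to compare against; your write-up supplies the standard verification in full, and every step checks out. In particular: the identification of $E_{11}$ with the constant pairs (valid since $\eta_0=(1,0)^\top$ and $\eta_{-1}=(1,-\alpha)^\top$ are independent for $\alpha>0$) justifies reading \eqref{2h} as the $E_{11}\oplus E_{22}$ splitting; the mean/mean-zero $L^2$-orthogonality correctly collapses \eqref{2h} into \eqref{2i}; and your spectral-expansion treatment of the lower bound in (2) is sound --- the required inequality $\frac{\alpha^2}{4}+\lambda_i-\delta\lambda_1\ge\epsilon\lambda_i$ does hold for all $i\ge1$ with, say, $\epsilon=\min\bigl\{\tfrac12,\tfrac{\alpha^2}{4\lambda_1}\bigr\}$, using $\lambda_i\ge\lambda_1$ and $\delta\le1$ exactly as you indicate (just be explicit, when writing it out, that you split $\frac{\alpha^2}{4}\|u\|^2$ so that the portion used to absorb the negative term is not the same portion retained to control $\|u\|^2$ and $\|v\|^2$). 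The computation $\langle\eta_0,\eta_{-1}\rangle_{E_{11}}=0$ and the Lipschitz bound in (4) --- resting on $\|(0,w)^\top\|_E=\|w\|$, the $1$-Lipschitz property of $\sin$, and $\frac{\alpha^2}{4}\|u_1-u_2\|^2\le\|Y_1-Y_2\|_E^2$ (the latter again via the Poincar\'e absorption with $\delta\le1$) --- are exactly right and yield $L_F\le\frac{2}{\alpha}$ as claimed.
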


Now let $E_2=E_{-1}\oplus E_{22}$, then $E_2$ is orthogonal to $E_1$
and $E=E_1\oplus E_2$. Thus, $E_2$ is also invariant under $C$.
Denote by $P$ and $Q\,(=I-P)$ the projections from $E$ into $E_1$
and $E_2$, respectively.

\begin{lemma}
\label{projection-lm}
\begin{itemize}
\item[(1)] For any $Y\in D(C)\cap E_2$, $\langle
CY,Y\rangle_{E}\leq-a\|Y\|_{E}^{2}$, where
\begin{equation}\label{a-eq}
a=\frac{\alpha}{2}-\Big|\frac{\alpha}{2}-\frac{\delta\lambda_1}{\alpha}\Big|.
\end{equation}

\item[(2)] $\|e^{Ct}Q\|\leq e^{-at}$ for $t\geq0$.

\item[(3)] $e^{Ct}PY=PY$ for $Y\in E$, $t\geq0$.
\end{itemize}
\end{lemma}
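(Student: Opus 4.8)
The plan is to handle the three parts in the order (3), (1), (2), since (3) is immediate, (1) carries all the analytic content, and (2) is a soft consequence of (1). For part (3), I would observe that $\eta_0=(1,0)^{\top}$ is an eigenvector of $C$ with eigenvalue $0$: since $A\cdot 1=-K\Delta 1=0$, we get $C\eta_0=(0,-A\cdot 1)^{\top}=0$. As $E_1=\text{span}\{\eta_0\}$ is one dimensional, $C$-invariant, and $C|_{E_1}=0$, the semigroup acts as the identity on $E_1$, so $e^{Ct}PY=PY$ for all $Y\in E$ and $t\ge 0$.

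For part (1), the idea is a block decomposition adapted to the inner product. Write $Y=Y_{-1}+Y_{22}$ with $Y_{-1}=c\eta_{-1}\in E_{-1}$ and $Y_{22}\in E_{22}$. The previous lemma gives $E_{11}\perp E_{22}$, hence $E_{-1}\perp E_{22}$ in $\langle\cdot,\cdot\rangle_E$, and one checks $CE_{-1}\subset E_{-1}$ (because $C\eta_{-1}=-\alpha\eta_{-1}$) and $C\big(D(C)\cap E_{22}\big)\subset E_{22}$ (because $\overline{Au}=0$ under the Neumann condition). Consequently the cross terms $\langle CY_{-1},Y_{22}\rangle_E$ and $\langle CY_{22},Y_{-1}\rangle_E$ vanish, so $\langle CY,Y\rangle_E=-\alpha\|Y_{-1}\|_E^2+\langle CY_{22},Y_{22}\rangle_{E_{22}}$ while $\|Y\|_E^2=\|Y_{-1}\|_E^2+\|Y_{22}\|_{E_{22}}^2$. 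On $E_{-1}$ the desired bound is immediate since $a\le\frac{\alpha}{2}\le\alpha$, so everything reduces to proving $\langle CY_{22},Y_{22}\rangle_{E_{22}}\le -a\|Y_{22}\|_{E_{22}}^2$.

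For this reduced inequality I would expand $\langle CY_{22},Y_{22}\rangle_{E_{22}}$ directly from \eqref{2g} using $CY_{22}=(v,-Au-\alpha v)^{\top}$; the two occurrences of $\langle A^{\frac{1}{2}}u,A^{\frac{1}{2}}v\rangle$ cancel, leaving $\langle CY_{22},Y_{22}\rangle_{E_{22}}=-\frac{\alpha}{2}\|A^{\frac{1}{2}}u\|^2-\frac{\alpha}{2}\|v\|^2-\delta\lambda_1\langle u,v\rangle$. Subtracting $-a\|Y_{22}\|_{E_{22}}^2$ and invoking the Poincar\'e inequality $\|A^{\frac{1}{2}}u\|^2\ge\lambda_1\|u\|^2$ (valid since $u\in\dot H^1(U)$), the claim becomes the nonpositivity of a quadratic form in $\|u\|$, $\|v\|$, and $\langle u,v\rangle$. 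I expect this to be the main obstacle, precisely because the cross term $\langle u,v\rangle$ must be absorbed and because the absolute value in the definition of $a$ forces a split into $\frac{\alpha}{2}\ge\frac{\delta\lambda_1}{\alpha}$ and $\frac{\alpha}{2}<\frac{\delta\lambda_1}{\alpha}$. In the first case $a=\frac{\delta\lambda_1}{\alpha}$, the coefficient $a\alpha-\delta\lambda_1$ of $\langle u,v\rangle$ vanishes outright and the diagonal terms are controlled by $\|A^{\frac{1}{2}}u\|^2\ge\lambda_1\|u\|^2$ together with $\delta\in(0,1]$; in the second case $a=\alpha-\frac{\delta\lambda_1}{\alpha}$, the cross term is dominated via Young's inequality, after which one checks that the resulting $2\times2$ coefficient matrix is negative semidefinite.

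Finally, part (2) follows from (1) by a differential-inequality argument. Since $E_1$ and $E_2$ are both $C$-invariant, $Q$ commutes with $e^{Ct}$ and $e^{Ct}QY_0\in E_2$, with $QY_0\in D(C)$ whenever $Y_0\in D(C)$. Setting $Y(t)=e^{Ct}QY_0$ for $Y_0\in D(C)$, we have $\frac{d}{dt}\|Y(t)\|_E^2=2\langle CY(t),Y(t)\rangle_E\le-2a\|Y(t)\|_E^2$, so Gronwall's inequality yields $\|Y(t)\|_E\le e^{-at}\|QY_0\|_E\le e^{-at}\|Y_0\|_E$, the last step because $Q$ is the orthogonal projection onto $E_2$ and hence $\|Q\|\le 1$. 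A density argument in $E$ then gives $\|e^{Ct}Q\|\le e^{-at}$ for all $t\ge 0$.
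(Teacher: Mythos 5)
Your proposal is correct, but it is worth noting that the paper itself proves only part (3) --- by exactly your argument: $C\eta_0=0$, so $e^{Ct}$ acts as the identity on $E_1$ --- while for parts (1) and (2) it simply cites Lemma 3.3 and Corollary 3.3.1 of \cite{19}, the deterministic paper from which the weighted norm $\|\cdot\|_E$ is imported. So your real contribution is a self-contained verification of what the paper outsources, and the verification checks out. On $E_{22}$ the two occurrences of $\langle A^{\frac{1}{2}}u,A^{\frac{1}{2}}v\rangle$ do cancel (using $\langle Au,v\rangle=\langle A^{\frac{1}{2}}u,A^{\frac{1}{2}}v\rangle$, valid under the Neumann condition), giving $\langle CY,Y\rangle_{E_{22}}=-\frac{\alpha}{2}\|A^{\frac{1}{2}}u\|^2-\frac{\alpha}{2}\|v\|^2-\delta\lambda_1\langle u,v\rangle$; in the case $\alpha^2\geq 2\delta\lambda_1$, where $a=\frac{\delta\lambda_1}{\alpha}$, the coefficient $a\alpha-\delta\lambda_1$ of $\langle u,v\rangle$ vanishes and, after Poincar\'{e}, the remaining $u$-coefficient reduces to $(1-\delta)\lambda_1\big(\frac{\delta\lambda_1}{\alpha}-\frac{\alpha}{2}\big)\leq 0$; in the opposite case, where $a=\alpha-\frac{\delta\lambda_1}{\alpha}$, Cauchy--Schwarz reduces the claim to nonnegativity of $(\alpha^2+(1-\delta)\lambda_1)\|u\|^2-2\alpha\|u\|\,\|v\|+\|v\|^2$, whose discriminant is $-4(1-\delta)\lambda_1\leq 0$ --- so negative semidefiniteness holds precisely because $\delta\in(0,1]$, and your two-case split is indeed forced by the absolute value in the definition of $a$. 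The block-diagonality step is also sound: $C\eta_{-1}=-\alpha\eta_{-1}$ and $\overline{Au}=0$ under the Neumann condition, so $C$ preserves the orthogonal splitting $E_2=E_{-1}\oplus E_{22}$ (orthogonality of $E_{-1}$ and $E_{22}$ follows from $E_{11}\perp E_{22}$), the cross terms vanish, and $a\leq\frac{\alpha}{2}\leq\alpha$ disposes of the $E_{-1}$ block. Your derivation of (2) from (1) via $\frac{d}{dt}\|e^{Ct}QY_0\|_E^2\leq-2a\|e^{Ct}QY_0\|_E^2$ on $D(C)$, Gronwall, $\|Q\|\leq 1$, and density is the standard dissipativity-to-contraction argument and is what the cited corollary amounts to. What each route buys: the paper's citation keeps section 3 short; your version makes transparent that the constant $a$ is exactly the decay rate extracted from this quadratic-form estimate, at the cost of a page of computation.
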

\begin{proof}
See Lemma 3.3 and Corollary 3.3.1 in \cite{19} for (1) and (2). We
now show (3). For $Y\in D(C)\cap E_1$, since
$\frac{d}{dt}e^{Ct}Y=e^{Ct}CY=0$, we have $e^{Ct}Y=e^{C0}Y=Y$. Then,
by approximation, $e^{Ct}Y=Y$ for $u\in E_1,\,\,t\geq0$, since
$D(A)\cap E_1$ is dense in $E_1$. Thus, $e^{Ct}PY=PY$ for $Y\in E$,
$t\geq0$.
\end{proof}

We will need the following lemma and its corollaries.
\begin{lemma}\label{tempered-lm1}
For any $\epsilon>0$, there is a tempered
random variable $r:\Omega\mapsto\mathbb{R}^{+}$ such that
\begin{equation}\label{2j}
\|z(\theta_{t}\omega)\|\leq e^{\epsilon|t|}r(\omega)\quad\text{for
all}\quad t\in\mathbb{R},\,\,\omega\in\Omega,
\end{equation}
where $r(\omega)$, $\omega\in\Omega$ satisfies
\begin{equation}\label{2k}
e^{-\epsilon|t|}r(\omega)\leq r(\theta_t\omega)\leq
e^{\epsilon|t|}r(\omega),\quad t\in\mathbb{R},\,\,\omega\in\Omega.
\end{equation}
\end{lemma}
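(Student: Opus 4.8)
The statement to prove concerns the stationary Ornstein–Uhlenbeck process $z(\theta_t\omega)$. The plan is to exploit the known sublinear growth of the underlying Wiener paths together with the explicit integral representation of $z_j$, and then package the resulting bound into a tempered random variable $r$ satisfying the tempered-bound estimate \eqref{2j} and the quasi-invariance \eqref{2k}.

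First I would recall the standard fact (from the cited references \cite{2,8}, and ultimately from the law of the iterated logarithm or a Borel–Cantelli argument) that for $\PP$-a.e.\ $\omega$ the stationary solution satisfies
\begin{equation*}
\lim_{t\to\pm\infty}\frac{|z_j(\theta_t\omega_j)|}{|t|}=0,
\end{equation*}
and more importantly that $|z_j(\theta_t\omega_j)|$ has at most subexponential growth: for every $\epsilon>0$ there is a constant so that $|z_j(\theta_t\omega_j)|$ is dominated by $e^{\epsilon|t|}$ times a finite random quantity. Summing over $j=1,\dots,m$ and using $z(\theta_t\omega)=\sum_j h_j z_j(\theta_t\omega_j)$ with $h_j\in H^2(U)\subset L^2(U)$, I would obtain a bound of the form $\|z(\theta_t\omega)\|\leq e^{\epsilon|t|}\tilde r(\omega)$ for some finite, measurable $\tilde r$.

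The key construction is then to define $r$ so that it simultaneously bounds $z$ for \emph{all} $t$ and is itself quasi-invariant under the flow. The natural choice is
\begin{equation*}
r(\omega):=\sup_{t\in\mathbb{R}}\Big(e^{-\epsilon|t|}\,\|z(\theta_t\omega)\|\Big),
\end{equation*}
which is finite for a.e.\ $\omega$ precisely because of the subexponential growth just established; measurability follows from continuity of $t\mapsto z(\theta_t\omega)$ on $\Omega_0$. By construction $\|z(\theta_t\omega)\|\leq e^{\epsilon|t|}r(\omega)$, giving \eqref{2j} immediately. For \eqref{2k} I would use the cocycle identity $\theta_s\theta_t=\theta_{s+t}$ to write, for fixed $t$,
\begin{equation*}
r(\theta_t\omega)=\sup_{s\in\mathbb{R}}e^{-\epsilon|s|}\|z(\theta_{s+t}\omega)\|
=\sup_{\tau\in\mathbb{R}}e^{-\epsilon|\tau-t|}\|z(\theta_{\tau}\omega)\|,
\end{equation*}
and then apply the elementary triangle-inequality bounds $|\tau|-|t|\leq|\tau-t|\leq|\tau|+|t|$ inside the exponential. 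This yields $e^{-\epsilon|t|}r(\omega)\leq r(\theta_t\omega)\leq e^{\epsilon|t|}r(\omega)$, which is exactly \eqref{2k}, and the same inequality shows that $r$ is tempered (indeed \eqref{2k} with arbitrary $\beta$ in place of $\epsilon$, combined with finiteness, forces $e^{-\beta|t|}r(\theta_t\omega)\to0$).

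The main obstacle I anticipate is justifying that the supremum defining $r$ is genuinely finite almost surely and measurable — i.e.\ converting the qualitative statement that $|z_j(\theta_t\omega)|/|t|\to0$ into the quantitative uniform-in-$t$ sublinear/subexponential control needed to make $\sup_t e^{-\epsilon|t|}\|z(\theta_t\omega)\|<\infty$. This requires the sharp growth estimate on the Wiener path increments (rather than just a limsup statement at $\pm\infty$), which is where I would lean most heavily on \cite{2,8}; the remaining verifications of \eqref{2j} and \eqref{2k} are then routine consequences of the definition and the cocycle property of $\theta_t$.
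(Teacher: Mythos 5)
Your proof is correct, but it takes a more self-contained route than the paper. The paper argues componentwise: for each $j$ it invokes Proposition 4.3.3 of Arnold \cite{1} (applicable because $|z_j(\omega_j)|$ is tempered and $t\mapsto \ln|z_j(\theta_t\omega_j)|$ is a.s.\ continuous) to obtain tempered random variables $r_j(\omega_j)$ with $|z_j(\omega_j)|\leq r_j(\omega_j)$ and the two-sided quasi-invariance \eqref{2l}, and then simply sets $r(\omega)=\sum_{j=1}^m r_j(\omega_j)\|h_j\|$, so that \eqref{2j} and \eqref{2k} follow by summation. You instead define $r(\omega)=\sup_{t\in\mathbb{R}}e^{-\epsilon|t|}\|z(\theta_t\omega)\|$ directly at the level of the full process and verify \eqref{2j} by construction and \eqref{2k} via the cocycle identity and $\bigl||\tau|-|t|\bigr|\leq|\tau-t|\leq|\tau|+|t|$; this is essentially the construction hidden inside Arnold's Proposition 4.3.3, unpacked and applied once to $\|z\|$ rather than cited $m$ times. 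What your route buys is transparency and independence from the black-box proposition, at the cost of having to justify by hand the a.s.\ finiteness of the supremum — which, as you correctly identify, rests on the sublinear orbit growth $\lim_{t\to\pm\infty}|z_j(\theta_t\omega_j)|/|t|=0$ (Lemma 2.1 of \cite{8}, which the paper itself uses in Section 6), together with continuity on the invariant set $\Omega_0$ for measurability; the paper's route buys brevity and also delivers the lower bounds $1/r_j\leq|z_j|$, which are not needed here. One small imprecision: your parenthetical claim that \eqref{2k} alone forces temperedness is not quite right, since \eqref{2k} holds only for the fixed $\epsilon$ in the definition of $r$ and yields $e^{-\beta|t|}r(\theta_t\omega)\to 0$ only for $\beta>\epsilon$; for $\beta\leq\epsilon$ you must return to the sublinear growth (or, equivalently, note that $r=r_\epsilon\leq r_{\epsilon'}$ for any $\epsilon'<\beta$ and apply \eqref{2k} for $r_{\epsilon'}$), but this uses exactly the ingredient you already have in hand, so it is a cosmetic fix rather than a gap.
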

\begin{proof}
For $j\in\{1,2,\dots,m\}$, since $|z_j(\omega_j)|$ is a tempered
random variable and the mapping $t\mapsto \ln
|z_j(\theta_t\omega_j)|$ is $\PP$-a.s. continuous, it follows from
Proposition 4.3.3 in \cite{1} that for any $\epsilon_j>0$ there is a
tempered random variable $r_j(\omega_j)>0$ such that
\begin{equation*}
\frac{1}{r_j(\omega_j)}\leq|z_j(\omega_j)|\leq r_j(\omega_j),
\end{equation*}
where $r_j(\omega_j)$ satisfies, for $\PP$-a.s. $\omega\in\Omega$,
\begin{equation}\label{2l}
e^{-\epsilon_j|t|}r_j(\omega_j)\leq r_j(\theta_t\omega_j)\leq
e^{\epsilon_j|t|}r_j(\omega_j),\quad t\in\mathbb{R}.
\end{equation}

Taking $\epsilon_1=\epsilon_2=\cdots=\epsilon_m=\epsilon$, then we
have
\begin{equation*}
\|z(\theta_t\omega)\|\leq\sum\limits_{j=1}^{m}|z_j(\theta_t\omega_j)|\cdot\|h_j\|\leq\sum\limits_{j=1}^{m}r_j(\theta_t\omega_j)\|h_j\|\leq
e^{\epsilon|t|}\sum\limits_{j=1}^{m}r_j(\omega_j)\|h_j\|.
\end{equation*}
Let $r(\omega)=\sum_{j=1}^{m}r_j(\omega_j)\|h_j\|$, \eqref{2j}
is satisfied  and (\ref{2k}) is trivial from \eqref{2l}.
\end{proof}

\begin{corollary}\label{tempered-cor1}
For any $\epsilon>0$, there is a tempered random variable
$r':\Omega\mapsto\mathbb{R}^{+}$ such that
\begin{equation*}
\|A^{\frac{1}{2}}z(\theta_{t}\omega)\|\leq
e^{\epsilon|t|}r'(\omega)\quad\text{for all}\quad
t\in\mathbb{R},\,\,\omega\in\Omega,
\end{equation*}
where
$r'(\omega)=\sum_{j=1}^{m}r_j(\omega_j)\|A^{\frac{1}{2}}h_j\|$ satisfies
\begin{equation*}
e^{-\epsilon|t|}r'(\omega)\leq r'(\theta_t\omega)\leq
e^{\epsilon|t|}r'(\omega),\quad t\in\mathbb{R},\,\,\omega\in\Omega.
\end{equation*}
\end{corollary}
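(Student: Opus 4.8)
The plan is to follow the proof of Lemma~\ref{tempered-lm1} almost verbatim, reusing the very same tempered random variables $r_j(\omega_j)$ produced there; the only new ingredient is to recompute the relevant norm after applying $A^{\frac{1}{2}}$. First I would check that $A^{\frac12}h_j$ is well defined: since $h_j\in H^{2}(U)$ with $\frac{\p h_j}{\p n}=0$ on $\p U$, we have $h_j\in D(A)\subset D(A^{\frac12})$ by \eqref{generator-eq}, so $\|A^{\frac12}h_j\|=\sqrt{K}\,\|\nabla h_j\|<\infty$. Because $z(\theta_t\omega)=\sum_{j=1}^{m}h_j z_j(\theta_t\omega_j)$ and each $z_j(\theta_t\omega_j)$ is a scalar, linearity of $A^{\frac12}$ gives $A^{\frac12}z(\theta_t\omega)=\sum_{j=1}^{m}(A^{\frac12}h_j)\,z_j(\theta_t\omega_j)$.

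Next, applying the triangle inequality and then the two estimates already established inside the proof of Lemma~\ref{tempered-lm1} — namely $|z_j(\theta_t\omega_j)|\le r_j(\theta_t\omega_j)$ together with the cocycle bound \eqref{2l} in the form $r_j(\theta_t\omega_j)\le e^{\epsilon|t|}r_j(\omega_j)$ — I would chain
\[
\|A^{\frac12}z(\theta_t\omega)\|\le\sum_{j=1}^{m}|z_j(\theta_t\omega_j)|\,\|A^{\frac12}h_j\|\le\sum_{j=1}^{m}r_j(\theta_t\omega_j)\,\|A^{\frac12}h_j\|\le e^{\epsilon|t|}\sum_{j=1}^{m}r_j(\omega_j)\,\|A^{\frac12}h_j\|,
\]
which is precisely the asserted bound with $r'(\omega)=\sum_{j=1}^{m}r_j(\omega_j)\|A^{\frac12}h_j\|$.

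Finally, the temperedness of $r'$ and the two-sided inequality for $r'(\theta_t\omega)$ are immediate: since each $r_j$ is tempered and the weights $\|A^{\frac12}h_j\|$ are fixed nonnegative constants, the finite linear combination $r'$ is again tempered, and summing \eqref{2l} against these weights yields $e^{-\epsilon|t|}r'(\omega)\le r'(\theta_t\omega)\le e^{\epsilon|t|}r'(\omega)$ for all $t\in\mathbb{R}$ and $\omega\in\Omega$.

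I do not expect any genuine obstacle: the corollary is a routine transcription of Lemma~\ref{tempered-lm1} with $\|h_j\|$ replaced throughout by $\|A^{\frac12}h_j\|$. The only step deserving a line of justification — the closest thing to a hard part — is confirming that the Neumann hypothesis $\frac{\p h_j}{\p n}=0$ places $h_j$ in $D(A^{\frac12})$ so that $\|A^{\frac12}h_j\|$ is finite; once that is noted, every other estimate carries over unchanged from the lemma.
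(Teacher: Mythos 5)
Your proposal is correct and coincides with the paper's (omitted) argument: the corollary is exactly Lemma~\ref{tempered-lm1} with $\|h_j\|$ replaced by $\|A^{\frac{1}{2}}h_j\|$, reusing the same tempered variables $r_j(\omega_j)$ and the bound \eqref{2l}, as the stated formula $r'(\omega)=\sum_{j=1}^{m}r_j(\omega_j)\|A^{\frac{1}{2}}h_j\|$ confirms. Your extra remark that $h_j\in D(A)\subset D(A^{\frac{1}{2}})$ by the Neumann condition, so that $\|A^{\frac{1}{2}}h_j\|=\sqrt{K}\|\nabla h_j\|<\infty$, is the one point the paper leaves tacit, and you handle it correctly.
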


\begin{corollary}\label{tempered-cor2}
For any $\epsilon>0$, there is a tempered random variable
$r'':\Omega\mapsto\mathbb{R}^{+}$ such that
\begin{equation*}
\|Az(\theta_{t}\omega)\|\leq
e^{\epsilon|t|}r''(\omega)\quad\text{for all}\quad
t\in\mathbb{R},\,\,\omega\in\Omega
\end{equation*}
where $r''(\omega)=\sum_{j=1}^{m}r_j(\omega_j)\|Ah_j\|$  satisfies
\begin{equation*}
e^{-\epsilon|t|}r''(\omega)\leq r''(\theta_t\omega)\leq
e^{\epsilon|t|}r''(\omega),\quad t\in\mathbb{R},\,\,\omega\in\Omega.
\end{equation*}
\end{corollary}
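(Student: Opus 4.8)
The plan is to mimic verbatim the argument used for Lemma~\ref{tempered-lm1} and Corollary~\ref{tempered-cor1}, since the only change is the replacement of the multipliers $\|h_j\|$ by $\|Ah_j\|$. The key structural observation is that $A$ is a \emph{linear} operator acting solely on the spatial variable, while the stochastic factors $z_j(\theta_t\omega_j)$ are scalars depending only on $t$ and $\omega$. Consequently $A$ commutes with the finite sum defining $z$, and no limiting or closedness argument is needed.

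First I would check that the expression $Ah_j$ is meaningful. By hypothesis $h_j\in H^2(U)$ with $\frac{\partial h_j}{\partial n}=0$ on $\partial U$, so $h_j\in D(A)$ and $Ah_j=-K\Delta h_j\in L^2(U)$ has finite norm $\|Ah_j\|$. Writing $z(\theta_t\omega)=\sum_{j=1}^m h_j\,z_j(\theta_t\omega_j)$ and applying $A$ term by term gives
\begin{equation*}
Az(\theta_t\omega)=\sum_{j=1}^m z_j(\theta_t\omega_j)\,Ah_j.
\end{equation*}
I would then reuse the tempered random variables $r_j(\omega_j)$ produced in the proof of Lemma~\ref{tempered-lm1} with the common choice $\epsilon_1=\cdots=\epsilon_m=\epsilon$, so that $|z_j(\theta_t\omega_j)|\le r_j(\theta_t\omega_j)\le e^{\epsilon|t|}r_j(\omega_j)$ by \eqref{2l}. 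The triangle inequality then yields
\begin{equation*}
\|Az(\theta_t\omega)\|\le\sum_{j=1}^m |z_j(\theta_t\omega_j)|\,\|Ah_j\|\le e^{\epsilon|t|}\sum_{j=1}^m r_j(\omega_j)\|Ah_j\|,
\end{equation*}
and setting $r''(\omega)=\sum_{j=1}^m r_j(\omega_j)\|Ah_j\|$ gives the desired estimate.

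Finally, the temperedness of $r''$ follows immediately from the two-sided bound \eqref{2l} for each $r_j$: since $r''$ is a finite nonnegative linear combination of the $r_j(\omega_j)$ with constant coefficients $\|Ah_j\|$, multiplying each side of \eqref{2l} by $\|Ah_j\|$ and summing over $j$ preserves the inequalities and produces $e^{-\epsilon|t|}r''(\omega)\le r''(\theta_t\omega)\le e^{\epsilon|t|}r''(\omega)$. There is essentially no obstacle here; the only points requiring care are the domain membership $h_j\in D(A)$, which is guaranteed by the standing assumptions on the $h_j$, and the observation that the sum is finite, so that passing $A$ inside requires only linearity and not continuity of $A$.
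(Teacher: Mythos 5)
Your proposal is correct and follows essentially the same route as the paper: Corollary \ref{tempered-cor2} is stated there without a separate proof precisely because it is the verbatim adaptation of the proof of Lemma \ref{tempered-lm1} (and of Corollary \ref{tempered-cor1}), replacing the coefficients $\|h_j\|$ by $\|Ah_j\|$ and reusing the same tempered random variables $r_j(\omega_j)$. Your explicit checks---that $h_j\in D(A)$ so $\|Ah_j\|<\infty$, and that $A$ passes through the finite sum by linearity alone---are sound and merely make precise what the paper leaves implicit.
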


\section{Existence of Random Attractor}

\qquad In this section, we study the existence of a random
attractor. Throughout this section we assume that
$p_0=2\pi\eta_0=(2\pi,0)^{\top}\in E_{1}$ and $\delta\in (0,1]$ is
such that $a>0$, where $a$ is as in  \eqref{a-eq}. We remark in the
end of this section that such $\delta$ always exists.

The space $D(C)$ can be endowed with the graph norm,
\begin{equation*}
\|Y\|_{\tilde{E}}=\|Y\|_E+\|CY\|_E\quad\text{for}\,\,Y\in D(C).
\end{equation*}
Since $C$ is a closed operator, $D(C)$ is a Banach space under the
graph norm. We denote $(D(C),\|\cdot\|_{\tilde{E}})$ by $\tilde{E}$
and let $\tilde{E}_1=\tilde{E}\cap E_1$, $\tilde{E}_2=\tilde{E}\cap
E_2$.

By Lemma \ref{periodicity-lm} and the fact that operator $C$ has a
zero eigenvalue, we will define a random dynamical system
$\mathbf{Y}$ defined on torus induced from $Y$. Then by properties
of $Y$ restricted on $E_2$, we can prove the existence of a random
attractor of $\mathbf{Y}$. Thus, we can say that $Y$ has a unbounded
random attractor. Now, we define $\mathbf{Y}$.

Let $\mathbb{T}^{1}=E_{1}/{p_0\mathbb{Z}}$ and
$\mathbf{E}=\mathbb{T}^{1}\times E_{2}$. For $Y_0\in E$, let
$\mathbf{Y_0}:=Y_0\,\,(mod\,p_0)=Y_0+p_0\mathbb{Z}\subset E$ denotes
the equivalence class of $Y_0$, which is an element of $\mathbf{E}$.
And the norm on $\mathbf{E}$ is denoted by
\begin{equation*}
\|\mathbf{Y_0}\|_{\mathbf{E}}=\inf\limits_{y\in
p_0\mathbb{Z}}\|Y_0+y\|_{E}.
\end{equation*}
Note that, by Lemma \ref{periodicity-lm},
$Y(t,\omega,Y_0+kp_0)=Y(t,\omega,Y_0)+kp_0,\,\,\forall
k\in\mathbb{Z}$ for $t\geq0$, $\omega\in\Omega$ and $Y_0\in E$. With
this, we define
$\mathbf{Y}:\mathbb{R}^{+}\times\Omega\times\mathbf{E}\rightarrow\mathbf{E}$
by setting
\begin{equation}\label{3a}
\mathbf{Y}(t,\omega,\mathbf{Y_0})=Y(t,\omega,Y_0)\,\,(mod\,p_0),
\end{equation}
where $\mathbf{Y_0}=Y_0\,\,(mod\,p_0)$. It is easy to see that
$\mathbf{Y}:\mathbb{R}^{+}\times\Omega\times\mathbf{E}\rightarrow\mathbf{E}$
is a random dynamical system.

Similarly, the random dynamical system $\phi$ defined in \eqref{2e}
also induces a random dynamical system $\mathbf{\Phi}$ on
$\mathbf{E}$. By \eqref{2e} and \eqref{3a}, $\mathbf{\Phi}$ is
defined by
\begin{equation}\label{3e}
\mathbf{\Phi}(t,\omega,\mathbf{\Phi_0})=\mathbf{Y}(t,\omega,\mathbf{Y_0})+\tilde{z}(\theta_t\omega)\,\,(mod\,p_0),
\end{equation}
where $\mathbf{\Phi_0}=\phi_0\,\,(mod\,p_0)$,
$\tilde{z}(\theta_t\omega)=(0,z(\theta_t\omega))^{\top}$ and
$\mathbf{Y_0}=\mathbf{\Phi_0}-\tilde{z}(\omega)\,\,(mod\,p_0)$.

The main result of this section can now be stated as follows.

\begin{theorem}\label{existence-attractor-thm}
The random dynamical system $\mathbf{Y}$ defined in \eqref{3a} has a
unique random attractor $\omega\mapsto \mathbf{A_0}(\omega)$, where
\begin{equation*}
\mathbf{A_0}(\omega)=\bigcap_{t>0}\overline{\bigcup_{\tau\geq
t}\mathbf{Y}(\tau,\theta_{-\tau}\omega,\mathbf{B_1}(\theta_{-\tau}\omega))},\quad\omega\in\Omega,
\end{equation*}
in which $\omega\mapsto\mathbf{B_1}(\omega)$ is a tempered random
compact attracting set for $\mathbf{Y}$.
\end{theorem}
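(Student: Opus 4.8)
The plan is to invoke Theorem \ref{existence-attractor-thm-rds}: since $\mathbf{Y}$ is a continuous random dynamical system, it suffices to exhibit a single tempered random compact attracting set $\omega\mapsto\mathbf{B_1}(\omega)$ for $\mathbf{Y}$, after which the stated omega-limit formula and the uniqueness both follow immediately. Because $\mathbf{E}=\mathbb{T}^{1}\times E_{2}$ has compact torus factor $\mathbb{T}^{1}=E_{1}/p_{0}\mathbb{Z}$, the only genuine work is to control the $E_{2}$-component. The key observation is that this component is precisely $QY(t,\omega,Y_{0})$, which descends to the quotient: since $p_{0}\in E_{1}$ we have $Qp_{0}=0$, so by Lemma \ref{periodicity-lm} $QY(t,\omega,Y_{0}+kp_{0})=QY(t,\omega,Y_{0})$ for all $k\in\mathbb{Z}$. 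Thus I would produce a tempered random compact set $K(\omega)\subset E_{2}$ that attracts $QY$, and set $\mathbf{B_1}(\omega)=\mathbb{T}^{1}\times K(\omega)$.

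First I would derive the basic absorbing estimate in $E$. Applying $Q$ to the variation-of-constants formula \eqref{2d} and using $e^{Ct}PY=PY$ together with $\|e^{Ct}Q\|\le e^{-at}$ from Lemma \ref{projection-lm}, I obtain
\[
\|QY(t,\omega,Y_{0})\|_{E}\le e^{-at}\|QY_{0}\|_{E}+\int_{0}^{t}e^{-a(t-s)}\|QF(\theta_{s}\omega,Y(s,\omega,Y_{0}))\|_{E}\,ds.
\]
The decisive structural feature is that the nonlinearity is bounded: because $\sin u$ enters only through an $L^{2}$-slot of $F$ while $A^{1/2}$ falls on the first coordinate $z$, one has $\|QF(\theta_{s}\omega,Y)\|_{E}\le C_{0}+C_{1}\|z(\theta_{s}\omega)\|+C_{2}\|A^{1/2}z(\theta_{s}\omega)\|$ \emph{uniformly in} $Y$, with the last two terms tempered by Lemma \ref{tempered-lm1} and Corollary \ref{tempered-cor1}. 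Replacing $\omega$ by $\theta_{-t}\omega$, substituting $\sigma=s-t$, and choosing the $\epsilon$ of Lemma \ref{tempered-lm1} with $\epsilon<a$, the integral converges as $t\to\infty$ to a finite tempered quantity $R_{0}(\omega)$, while $e^{-at}\|QY_{0}\|_{E}\to0$ for $Y_{0}$ in any tempered set. Hence the $E_{2}$-ball of radius $R_{0}(\omega)+1$ is a tempered random \emph{bounded} absorbing set for $QY$.

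The main obstacle is upgrading boundedness to compactness, since this ball is not precompact in the infinite-dimensional $E_{2}$. As $C$ generates only a $C_{0}$-semigroup (the system is of damped-wave type), there is no parabolic smoothing, so I would instead obtain a bounded absorbing set in the graph-norm space $\tilde{E}=D(C)$ and exploit that the embedding $\tilde{E}=D(A)\times H^{1}(U)\hookrightarrow H^{1}(U)\times L^{2}(U)=E$ is compact by Rellich--Kondrachov. For this I would run the energy estimate one level higher, on $W=CQY$, which satisfies $\dot{W}=CW+CF(\theta_{t}\omega,Y)$; using the dissipativity $\langle CW,W\rangle_{E}\le-a\|W\|_{E}^{2}$ on $E_{2}$ from Lemma \ref{projection-lm}(1) and estimating $\|CF\|_{E}$, one checks that $CF$ involves only the bounded term $\sin u$, the fixed data $f$, the factor $\|A^{1/2}u\|\le\|Y\|_{E}$ already controlled after the first absorption, and the tempered noise terms $\|z(\theta_{t}\omega)\|$, $\|A^{1/2}z(\theta_{t}\omega)\|$, $\|Az(\theta_{t}\omega)\|$ governed by Lemma \ref{tempered-lm1} and Corollaries \ref{tempered-cor1}, \ref{tempered-cor2}. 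A two-step bootstrap — first entering the $E$-ball of radius $R_{0}(\omega)+1$, then estimating $W$ with $\|A^{1/2}u\|$ already bounded — yields a tempered random radius $R_{1}(\omega)$ with $\|QY(t,\theta_{-t}\omega,\cdot)\|_{\tilde{E}}\le R_{1}(\omega)$ for all large $t$.

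Finally, I would let $K(\omega)$ be the closure in $E_{2}$ of the $\tilde{E}_{2}$-ball of radius $R_{1}(\omega)$; by the compact embedding $\tilde{E}\hookrightarrow E$ this is a random compact set, it is tempered because $R_{1}$ is tempered, and by construction it attracts $QY$ from every tempered set. Then $\mathbf{B_1}(\omega)=\mathbb{T}^{1}\times K(\omega)$ is a tempered random compact attracting set for $\mathbf{Y}$, and Theorem \ref{existence-attractor-thm-rds} delivers the unique random attractor $\mathbf{A_0}(\omega)$ with exactly the claimed formula. The whole argument hinges on the boundedness of $\sin u$, which keeps every forcing term uniformly controlled by tempered noise quantities; the delicate point is the graph-norm (hyperbolic) regularity estimate, where analyticity is unavailable and one must rely on the structural dissipativity of $C$ on $E_{2}$.
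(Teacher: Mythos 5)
Your overall architecture coincides with the paper's: the absorbing estimate for $QY$ with tempered radius $R_0(\omega)$ is exactly Lemma \ref{existence-attracting-pseudo-ball-lm}, the compactness is sought through a graph-norm bound plus the compact embedding $\tilde{E}\hookrightarrow E$, and the conclusion is via Theorem \ref{existence-attractor-thm-rds}. But there is a genuine gap at the compactness step. You propose to run the energy estimate on $W=CQY$ along the full solution and conclude an absorption estimate $\|QY(t,\theta_{-t}\omega,\cdot)\|_{\tilde{E}}\le R_1(\omega)$ for all large $t$. This cannot hold: the equation is of damped-wave type, $e^{Ct}$ is a $C_0$-group with no smoothing, so for initial data $Y_0$ with $QY_0\in E_2\setminus D(C)$ --- which tempered sets in $E$, and indeed the absorbing pseudo-ball $B_0(\omega)$ itself, certainly contain --- one has $QY(t,\omega,Y_0)=e^{Ct}QY_0+(\text{a } D(C)\text{-valued integral term})$, and $e^{Ct}QY_0\notin D(C)$ for every $t$ since the group maps $D(C)$ onto $D(C)$. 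Hence $\|QY(t)\|_{\tilde{E}}=\infty$ for all $t$ for such data; moreover your evolution equation $\dot{W}=CW+CF(\theta_t\omega,Y)$ cannot even be initialized, because $W(0)=CQY_0$ is undefined. The ``two-step bootstrap'' does not repair this: entering the $E$-ball of radius $R_0(\omega)+1$ confers no extra regularity on the trajectory.

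The missing idea is the superposition splitting used in the paper: decompose the solution as $Y=Y_1+Y_2$, where $Y_2$ solves the homogeneous problem \eqref{3h} carrying the rough initial data, so that $\|QY_2(t)\|_E\le e^{-at}R_0(\theta_{-t}\omega)\to 0$ as in \eqref{aux-eqq1}, while $Y_1$ solves \eqref{3g} with \emph{zero} initial data, so that $CQY_1(t)=\int_0^t e^{C(t-s)}CF\,ds$ contains no initial-data term and can be bounded by a tempered radius $R_1(\omega)$ exactly along the lines of your $\|CF\|_E$ estimate (the term $\|A^{\frac{1}{2}}\sin(Y_u)\|\le a_4\|QY\|_E$ being controlled through the absorbing estimate, cf.\ \eqref{3m}). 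The resulting pseudo-ball $B_1(\omega)$ in $\tilde{E}$ is then a compact \emph{attracting} set, in the sense $d_H(Y(t,\theta_{-t}\omega,B_0(\theta_{-t}\omega)),B_1(\omega))\to 0$, rather than an absorbing one --- and attraction is all that Theorem \ref{existence-attractor-thm-rds} requires. With this substitution your argument becomes the paper's proof; the remaining ingredients of your proposal (the quotient structure $\mathbf{E}=\mathbb{T}^1\times E_2$ with $Qp_0=0$, the uniform-in-$Y$ tempered bound on the forcing thanks to the boundedness of $\sin$, the temperedness of the radii, and the compact embedding) are correct and match the paper.
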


\begin{corollary}\label{existence-attractor-cor}
The induced random dynamical system $\mathbf{\Phi}$
defined in \eqref{3e} has a random attractor $\omega\mapsto
\mathbf{A}(\omega)$, where
$\mathbf{A}(\omega)=\mathbf{A_0}(\omega)+\tilde{z}(\omega)\,\,(mod\,p_0)$
for all $\omega\in\Omega$.
\end{corollary}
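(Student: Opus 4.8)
The plan is to exploit the fact that $\mathbf\Phi$ is conjugate to $\mathbf Y$ through a random translation, so that the attractor of $\mathbf\Phi$ is simply the image of $\mathbf A_0$ under that translation. Concretely, for each $\omega$ I would introduce the map $T_\omega:\mathbf E\to\mathbf E$, $T_\omega\mathbf Z=\mathbf Z+\tilde z(\omega)\,(mod\,p_0)$, which is a homeomorphism of the torus $\mathbf E=\mathbb T^1\times E_2$ with inverse $T_\omega^{-1}\mathbf Z=\mathbf Z-\tilde z(\omega)\,(mod\,p_0)$ and is moreover an isometry of the metric on $\mathbf E$. With this notation the definition \eqref{3e} together with $\mathbf Y_0=\mathbf\Phi_0-\tilde z(\omega)\,(mod\,p_0)$ reads
\[
\mathbf\Phi(t,\omega,\cdot)=T_{\theta_t\omega}\circ\mathbf Y(t,\omega,\cdot)\circ T_\omega^{-1},
\]
so the natural candidate is $\mathbf A(\omega):=T_\omega(\mathbf A_0(\omega))=\mathbf A_0(\omega)+\tilde z(\omega)\,(mod\,p_0)$, exactly as claimed.

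First I would check that $\omega\mapsto\mathbf A(\omega)$ is a random compact set: compactness of each $\mathbf A(\omega)$ is immediate since $T_\omega$ is a homeomorphism and $\mathbf A_0(\omega)$ is compact by Theorem \ref{existence-attractor-thm}, while measurability follows from $d(\mathbf x,\mathbf A(\omega))=d(\mathbf x-\tilde z(\omega),\mathbf A_0(\omega))$ and the measurability of $\omega\mapsto\tilde z(\omega)$ and of $\omega\mapsto d(\cdot,\mathbf A_0(\omega))$.

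Invariance would then follow formally from the conjugacy: since $T_\omega^{-1}(\mathbf A(\omega))=\mathbf A_0(\omega)$ and $\mathbf A_0$ is invariant under $\mathbf Y$,
\[
\mathbf\Phi(t,\omega,\mathbf A(\omega))=T_{\theta_t\omega}\big(\mathbf Y(t,\omega,\mathbf A_0(\omega))\big)=T_{\theta_t\omega}(\mathbf A_0(\theta_t\omega))=\mathbf A(\theta_t\omega).
\]
For attraction, given a tempered random set $\omega\mapsto\mathbf D(\omega)$ I would set $\tilde{\mathbf D}(\omega):=T_\omega^{-1}(\mathbf D(\omega))$, which is again tempered because $\tilde z(\omega)$ is tempered by Lemma \ref{tempered-lm1} and the torus factor $\mathbb T^1$ is compact. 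Applying the conjugacy at $\theta_{-t}\omega$ gives $\mathbf\Phi(t,\theta_{-t}\omega,\mathbf D(\theta_{-t}\omega))=T_\omega\big(\mathbf Y(t,\theta_{-t}\omega,\tilde{\mathbf D}(\theta_{-t}\omega))\big)$, and because $T_\omega$ is an isometry,
\[
d_H\big(\mathbf\Phi(t,\theta_{-t}\omega,\mathbf D(\theta_{-t}\omega)),\mathbf A(\omega)\big)=d_H\big(\mathbf Y(t,\theta_{-t}\omega,\tilde{\mathbf D}(\theta_{-t}\omega)),\mathbf A_0(\omega)\big)\to0
\]
as $t\to\infty$, since $\mathbf A_0$ attracts $\tilde{\mathbf D}$ by Theorem \ref{existence-attractor-thm}.

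The step I expect to require the most care is justifying that $T_\omega$ is a well-defined isometry of the quotient space that preserves temperedness. This means decomposing $\tilde z(\omega)=(0,z(\omega))^\top$ along $E=E_1\oplus E_2$: the $E_1$-component only shifts the compact torus factor $\mathbb T^1$ and is therefore harmless both for the isometry property and for temperedness, while for the $E_2$-component one invokes the temperedness bound $\|z(\theta_t\omega)\|\le e^{\epsilon|t|}r(\omega)$ from Lemma \ref{tempered-lm1}. Once this bookkeeping is in place, the remaining assertions are purely formal consequences of the conjugacy identity.
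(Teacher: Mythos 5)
Your proposal is correct and is essentially the paper's own argument: the paper's proof is the one-line remark that the corollary ``follows from \eqref{3e} and Theorem \ref{existence-attractor-thm}'', i.e.\ precisely the observation that $\mathbf{\Phi}$ is conjugate to $\mathbf{Y}$ via the random translation by $\tilde{z}(\omega)$ on $\mathbf{E}$, which you have merely written out in full (compactness, measurability, invariance, and attraction transferred through the isometry $T_\omega$, with temperedness preserved by Lemma \ref{tempered-lm1}). The extra detail you supply is sound but does not constitute a different route.
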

\begin{proof}
It follows from \eqref{3e} and Theorem
\ref{existence-attractor-thm}.
\end{proof}

To prove Theorem \ref{existence-attractor-thm}, we first introduce
the concept of random pseudo-balls and prove a lemma on the
existence of a pseudo-tempered random absorbing pseudo-ball.

\begin{definition}\label{pseudo-ball-def}
Let $R:\Omega\rightarrow\mathbb{R}^{+}$ be a random variable. A
random pseudo-ball $\omega\in\Omega \mapsto B(\omega)\subset E$ with
random radius $\omega\mapsto R(\omega)$ is a set of the form
\begin{equation*}
\omega\mapsto B(\omega)=\{b(\omega)\in E:\|Qb(\omega)\|_E\leq
R(\omega)\}.
\end{equation*}
Furthermore, a random set $\omega\mapsto B(\omega)\subset  E$ is called
pseudo-tempered provided $\omega\mapsto QB(\omega)$ is a tempered
random set in $E$, i.e., for $\PP$-a.s.$\omega\in\Omega$,
\begin{equation*}
\lim\limits_{t\rightarrow\infty}e^{-\beta t}\sup\{\|Qb\|_{E}:b\in
B(\theta_{-t}\omega)\}=0\quad\text{for all}\quad\beta>0.
\end{equation*}
\end{definition}
Notice that any random pseudo-ball $\omega\mapsto B(\omega)$ in $E$
has the form $\omega\mapsto E_1\times QB(\omega)$, where
$\omega\mapsto QB(\omega)$ is a random ball in $E_2$, which implies
the measurability of $\omega\mapsto B(\omega)$.

By Definition \ref{pseudo-ball-def}, if $\omega\mapsto B(\omega)$ is a random
pseudo-ball in $E$, then $\omega\mapsto B(\omega)\,\,(mod\,p_0)$ is
random bounded set in $\mathbf{E}$. And if $\omega\mapsto B(\omega)$
is a pseudo-tempered random set in $E$, then $\omega\mapsto
B(\omega)\,\,(mod\,p_0)$ is tempered random set in $\mathbf{E}$.

\begin{lemma}\label{existence-attracting-pseudo-ball-lm}
Let $a>0$. Then there exists a tempered random set $\omega\mapsto
\mathbf{B_0}(\omega):=B_0(\omega)\,\,(mod\,p_0)$ in $\mathbf{E}$
such that, for any tempered random set $\omega\mapsto
\mathbf{B}(\omega):=B(\omega)\,\,(mod\,p_0)$ in $\mathbf{E}$, there
is a $T_{\mathbf{B}}(\omega)>0$ such that
\begin{equation*}
\mathbf{Y}(t,\theta_{-t}\omega,\mathbf{B}(\theta_{-t}\omega))\subset
\mathbf{B_0}(\omega)\quad \text{for all}\quad t\geq
T_{\mathbf{B}}(\omega),\,\,\omega\in\Omega,
\end{equation*}
where $\omega\mapsto B_0(\omega)$ is a random pseudo-ball in $E$
with random radius $\omega\mapsto R_0(\omega)$ and $\omega\mapsto
B(\omega)$ is any pseudo-tempered random set in $E$.
\end{lemma}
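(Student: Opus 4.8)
The plan is to control only the $Q$-component of the flow, which is exactly what a pseudo-ball measures, since the $E_1$-direction (along which $p_0$ lies) carries no decay and is quotiented out on $\mathbf{E}$. Because $p_0=2\pi\eta_0\in E_1$ we have $Qp_0=0$, so $\|Qb\|_E$ is constant on each equivalence class $b\,(mod\,p_0)$ and the pseudo-ball condition descends unambiguously to $\mathbf{E}$; the measurability of $\omega\mapsto B_0(\omega)$ is then automatic from its product structure $E_1\times QB_0(\omega)$ noted after Definition \ref{pseudo-ball-def}. It therefore suffices to produce a tempered random radius $R_0(\omega)$ and to show $\|QY(t,\theta_{-t}\omega,Y_0)\|_E\le R_0(\omega)$ for all large $t$ whenever $Y_0$ ranges over a pseudo-tempered set $B(\theta_{-t}\omega)$.

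First I would derive the evolution equation for $w:=QY$. Since $E_1=\ker C$ and both $E_1,E_2$ are $C$-invariant, $Q$ commutes with $C$ and $CPY=0$, so from \eqref{2c} one gets $\dot w=Cw+QF(\theta_t\omega,Y)$. Pairing with $w$ in the $E$-inner product and invoking Lemma \ref{projection-lm}(1) gives $\tfrac12\tfrac{d}{dt}\|w\|_E^2\le -a\|w\|_E^2+\langle QF,w\rangle_E$, where by Lemma (3) above $Q$ is orthogonal so $\langle QF,w\rangle_E\le\|F(\theta_t\omega,Y)\|_E\|w\|_E$. The decisive structural point, special to the sine-Gordon nonlinearity, is that $F(\theta_t\omega,Y)$ is bounded uniformly in $Y$: its only $Y$-dependence is through $-\sin u$, and $\|\sin u\|\le|U|^{1/2}$. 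Using the explicit norm \eqref{2i} together with Lemma \ref{tempered-lm1} and Corollary \ref{tempered-cor1}, I would bound $\|F(\theta_t\omega,Y)\|_E\le M(\theta_t\omega)$ with $M(\theta_t\omega)\le \tilde r(\omega)e^{\epsilon|t|}$ for a tempered $\tilde r$, uniformly in $Y$. Young's inequality then yields the closed differential inequality $\tfrac{d}{dt}\|w\|_E^2\le -a\|w\|_E^2+\tfrac1a M(\theta_t\omega)^2$.

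Next I would integrate by Gronwall, replace $\omega$ by $\theta_{-t}\omega$, and substitute $\sigma=s-t$ to obtain
\[
\|QY(t,\theta_{-t}\omega,Y_0)\|_E^2\le e^{-at}\|QY_0\|_E^2+\frac1a\int_{-t}^{0}e^{a\sigma}M(\theta_\sigma\omega)^2\,d\sigma.
\]
Choosing $\epsilon<a/2$ in Lemma \ref{tempered-lm1} makes the integrand $O\!\left(e^{(a-2\epsilon)\sigma}\right)$ as $\sigma\to-\infty$, so the integral converges and I can define
\[
R_0(\omega)^2:=1+\frac1a\int_{-\infty}^{0}e^{a\sigma}M(\theta_\sigma\omega)^2\,d\sigma,\qquad B_0(\omega):=\{b\in E:\|Qb\|_E\le R_0(\omega)\}.
\]
The tempering of $R_0$, hence of $\mathbf{B_0}=B_0\,(mod\,p_0)$ in $\mathbf{E}$, follows from \eqref{2k} and the analogous relation in Corollary \ref{tempered-cor1}. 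Finally, given any pseudo-tempered $B$, the term $e^{-at}\|QY_0\|_E^2$ tends to $0$ because $\|QY_0\|_E\le\sup\{\|Qb\|_E:b\in B(\theta_{-t}\omega)\}$ is subexponential in $t$, while the partial integral is always dominated by the full one; choosing $T_{\mathbf B}(\omega)$ so that the first term is $\le 1$ for $t\ge T_{\mathbf B}(\omega)$ gives $\|QY(t,\theta_{-t}\omega,Y_0)\|_E\le R_0(\omega)$, i.e. the claimed absorption.

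The main obstacle I anticipate is not any single estimate but the bookkeeping forced by the absence of decay in $E_1$: I must consistently project onto $E_2$, verify that $Q$ commutes with both the flow and the $p_0$-quotient (which needs $Qp_0=0$), and ensure that convergence of the defining integral for $R_0$ is compatible with the tempering exponent $\epsilon$. This last point is where the hypothesis $a>0$ is essential, since it is precisely what separates the decay rate $a$ from the subexponential growth of the noise terms and allows $\epsilon$ to be taken strictly below $a/2$.
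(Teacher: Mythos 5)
Your proposal is correct and takes essentially the same approach as the paper: project onto $E_2$, exploit the uniform decay rate $a$ there together with the uniform boundedness of $\sin u$ and the tempered bounds of Lemma \ref{tempered-lm1} and Corollary \ref{tempered-cor1} (the paper takes $\epsilon=\tfrac{a}{2}$ where you take $\epsilon<\tfrac{a}{2}$), define a tempered radius $R_0(\omega)$ with a slack factor absorbing the transient term $e^{-at}\|QY_0\|_E$, and descend to $\mathbf{E}$ using $Qp_0=0$. The one cosmetic difference is that you obtain the decay via an energy inequality for $\|QY\|_E^2$ (Lemma \ref{projection-lm}(1)) plus Gronwall, whereas the paper estimates the variation-of-constants formula directly with $\|e^{Ct}Q\|\le e^{-at}$ (Lemma \ref{projection-lm}(2)); since solutions with $Y_0\in E$ are only mild, the differentiation of $\|QY\|_E^2$ should be justified by approximating with data in $D(C)$, or one can simply pass to the integrated inequality, which is exactly the paper's estimate.
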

\begin{proof}
For $\omega\in\Omega$, we obtain from \eqref{2d} that
\begin{equation}\label{3b}
Y(t,\omega,Y_0(\omega))=e^{Ct}Y_0(\omega)+\int_{0}^{t}e^{C(t-s)}F(\theta_s\omega,Y(s,\omega,Y_0(\omega)))ds.
\end{equation}
The projection of \eqref{3b} to $E_2$ is
\begin{equation}\label{3c}
QY(t,\omega,Y_0(\omega))=e^{Ct}QY_0(\omega)+\int_{0}^{t}e^{C(t-s)}QF(\theta_s\omega,Y(s,\omega,Y_0(\omega)))ds.
\end{equation}
By replacing $\omega$ by $\theta_{-t}\omega$, it follows from
\eqref{3c} that
\begin{equation*}
QY(t,\theta_{-t}\omega,Y_0(\theta_{-t}\omega))=e^{Ct}QY_0(\theta_{-t}\omega)+\int_{0}^{t}e^{C(t-s)}QF(\theta_{s-t}\omega,Y(s,\theta_{-t}\omega,Y_0(\theta_{-t}\omega)))ds,
\end{equation*}
and
it then follows from Lemma \ref{projection-lm} and $Q^2=Q$ that
\begin{equation}
\begin{split}
&\|QY(t,\theta_{-t}\omega,Y_0(\theta_{-t}\omega))\|_{E}\\
&\quad\quad\leq
e^{-at}\|QY_0(\theta_{-t}\omega)\|_{E}+\int_{0}^{t}e^{-a(t-s)}\|F(\theta_{s-t}\omega,Y(s,\theta_{-t}\omega,Y_0(\theta_{-t}\omega)))\|_{E}ds.
\end{split}\label{3d}
\end{equation}
By \eqref{2i}, Lemma \ref{tempered-lm1} and Corollary
\ref{tempered-cor1} with $\epsilon=\frac{a}{2}$,
\begin{equation*}
\begin{split}
&\|F(\theta_{s-t}\omega,Y(s,\theta_{-t}\omega,Y_0(\theta_{-t}\omega)))\|_{E}\\
&\quad\quad=\Big(\frac{\alpha^2}{4}\|z(\theta_{s-t}\omega)\|^2+\|(1-\frac{\alpha}{2})z(\theta_{s-t}\omega)-\sin(Y_u)+f\|^2+\|A^{\frac{1}{2}}z(\theta_{s-t}\omega)\|^2\\
&\quad\quad\quad-\delta\lambda_1\|z(\theta_{s-t}\omega)-\overline{z(\theta_{s-t}\omega)}\|^2\Big)^{\frac{1}{2}}\\
&\quad\quad\leq\Big((\alpha^2-3\alpha+3)\|z(\theta_{s-t}\omega)\|^2+3\|\sin(Y_u)\|^2+3\|f\|^2+\|A^{\frac{1}{2}}z(\theta_{s-t}\omega)\|^2\Big)^{\frac{1}{2}}\\
&\quad\quad\leq\Big((\alpha^2-3\alpha+3)e^{a(t-s)}(r(\omega))^2+e^{a(t-s)}(r'(\omega))^2+3|U|+3\|f\|^2\Big)^{\frac{1}{2}}\\
&\quad\quad\leq
a_1e^{\frac{a}{2}(t-s)}r(\omega)+e^{\frac{a}{2}(t-s)}r'(\omega)+a_2,
\end{split}
\end{equation*}
where $Y_u$ satisfies
$Y(s,\theta_{-t}\omega,Y_0(\theta_{-t}\omega))=(Y_u,Y_v)^{\top}$,
$a_1=\sqrt{\alpha^2-3\alpha+3}$, $a_2=\sqrt{3|U|+3\|f\|^2}$ and
$|U|$ is the Lebesgue measure of $U$. We find from \eqref{3d} that
\begin{equation*}
\|QY(t,\theta_{-t}\omega,Y_0(\theta_{-t}\omega))\|_{E}\leq
e^{-at}\|QY_0(\theta_{-t}\omega)\|_{E}+\frac{2}{a}(1-e^{-\frac{a}{2}t})(a_1r(\omega)+r'(\omega))+\frac{a_2}{a}(1-e^{-at}).
\end{equation*}
Now for $\omega\in\Omega$, define
\begin{equation*}
R_0(\omega)=\frac{4}{a}(a_1r(\omega)+r'(\omega))+\frac{2a_2}{a}.
\end{equation*}
Then, for any pseudo-tempered random set $\omega\mapsto B(\omega)$
in $E$ and any $Y_0(\theta_{-t}\omega)\in B(\theta_{-t}\omega)$,
there is a $T_{B}(\omega)>0$ such that for $t\geq T_{B}(\omega)$,
\begin{equation*}
\|QY(t,\theta_{-t}\omega,Y_0(\theta_{-t}\omega))\|_{E}\leq
R_0(\omega),\,\,\omega\in\Omega,
\end{equation*}
which implies
\begin{equation*}
Y(t,\theta_{-t}\omega,B(\theta_{-t}\omega))\subset B_0(\omega)\quad
\text{for all}\quad t\geq T_{B}(\omega),\,\,\omega\in\Omega,
\end{equation*}
where $\omega\mapsto B_0(\omega)$ is the random pseudo-ball centered
at origin with random radius $\omega\mapsto R_0(\omega)$. In fact,
$\omega\mapsto R_0(\omega)$ is a tempered random variable since
$\omega\mapsto r(\omega)$ and $\omega\mapsto r'(\omega)$ are
tempered random variables. Then the measurability of random
pseudo-tempered ball $\omega\mapsto B_0(\omega)$ is obtained from
Definition \ref{pseudo-ball-def} and $\omega\mapsto B_0(\omega)$ is
a random pseudo-ball. Hence, $\omega\mapsto
\mathbf{B_0}(\omega):=B_0(\omega)\,\,(mod\,p_0)$ is a tempered
random ball in $\mathbf{E}$. It then follows from the definition of
$\mathbf{Y}$ that
\begin{equation*}
\mathbf{Y}(t,\theta_{-t}\omega,\mathbf{B}(\theta_{-t}\omega))\subset
\mathbf{B_0}(\omega)\quad \text{for all}\quad t\geq
T_{\mathbf{B}}(\omega),\,\,\omega\in\Omega,
\end{equation*}
where $T_{\mathbf{B}}(\omega)=T_{B}(\omega)$ for $\omega\in\Omega$.
This complete the proof.
\end{proof}

We now prove Theorem \ref{existence-attractor-thm}.

\begin{proof}[Proof of Theorem \ref{existence-attractor-thm}]
By Theorem \ref{existence-attractor-thm-rds}, it suffices to prove
the existence of a random attracting set which restricted on $E_2$
is tempered and compact, i.e., there exists a random set
$\omega\mapsto B_1(\omega)$ such that $\omega\mapsto QB_1(\omega)$
is tempered and compact in $E_2$ and for any pseudo-tempered random
set $\omega\mapsto B(\omega)$ in $E$,
\begin{equation*}
d_{H}(Y(t,\theta_{-t}\omega,B(\theta_{-t}\omega)),B_1(\omega))\rightarrow0\quad\text{as}\quad
t\rightarrow\infty,\,\,\omega\in\Omega,
\end{equation*}
where $d_{H}$ is the Hausdorff semi-distance. Since pseudo-tempered
random sets in $E$ are absorbed by the random absorbing set
$\omega\mapsto B_0(\omega)$, it suffices to prove that
\begin{equation}\label{3f}
d_{H}(Y(t,\theta_{-t}\omega,B_0(\theta_{-t}\omega)),B_1(\omega))\rightarrow0\quad\text{as}\quad
t\rightarrow\infty,\,\,\omega\in\Omega.
\end{equation}
Clearly, if such a $\omega\mapsto B_1(\omega)$ exists, then
$\omega\mapsto\mathbf{B_1}(\omega):={B_1}(\omega)\,\,(mod\,p_0)$ is
a tempered random compact attracting set for $\mathbf{Y}$. We next
show that \eqref{3f} holds.

By the superposition principle, \eqref{2c} with initial data
$Y_0(\omega)$ can be decomposed into
\begin{equation}\label{3g}
\dot{Y}_1=CY_1+F(\theta_t\omega,Y(t,\omega,Y_0(\omega))),\quad
Y_{10}(\omega)=0
\end{equation}
and
\begin{equation}\label{3h}
\dot{Y}_2=CY_2,\quad Y_{20}(\omega)=Y_0(\omega),
\end{equation}
where $Y(t,\omega,Y_0(\omega))$ is the solution of \eqref{2c} with
initial data $Y_0(\omega)\in B_0(\omega)$. Let
$Y_1(t,\omega,Y_{10}(\omega))$ and $Y_2(t,Y_{20}(\omega))$ be
solutions of \eqref{3g} and \eqref{3h}, respectively. We now give
some estimations of $Y_1(t,\omega,Y_{10}(\omega))$ and
$Y_2(t,Y_{20}(\omega))$, which ensure the existence of a random
attracting set which restricted on $E_2$ is tempered and compact.

We first estimate $Y_2(t,Y_{20}(\omega))$. Clearly, \eqref{3h} is a
linear problem. It is easy to see that
\begin{equation*}
Y_2(t,Y_{20}(\omega))=e^{Ct}Y_{20}(\omega),
\end{equation*}
which implies (with $\omega$ being replaced by $\theta_{-t}\omega$)
that
\begin{equation}\label{aux-eqq1}
\|QY_2(t,Y_{20}(\theta_{-t}\omega))\|_E\leq\|e^{Ct}Q\|\cdot\|QY_{20}(\theta_{-t}\omega)\|_E\leq e^{-a t}R_0(\theta_{-t}\omega)\rightarrow 0\quad\text{as}\quad
t\rightarrow\infty.
\end{equation}

For $Y_1(t,\omega,Y_{10}(\omega))$, we show that it is bounded by a
tempered random bounded closed set in $\tilde{E}$, which then is
compact in $E$ since $\tilde{E}$ is compactly imbedded in $E$. Note
that
\begin{equation}\label{3i}
Y_1(t,\omega,Y_{10}(\omega))=\int_{0}^{t}e^{C(t-s)}F(\theta_s\omega,Y(s,\omega,Y_0(\omega)))ds,
\end{equation}
it then follows that
\begin{equation}\label{3j}
\|QY_1(t,\theta_{-t}\omega,Y_{10}(\theta_{-t}\omega))\|_E\leq
\frac{2}{a}(1-e^{-\frac{a}{2}t})(a_1r(\omega)+r'(\omega))+\frac{a_2}{a}(1-e^{-at}),
\end{equation}
where $a_1=\sqrt{\alpha^2-3\alpha+3}$ and
$a_2=\sqrt{3|U|+3\|f\|^2}$ are the same as in the proof of Lemma \ref{existence-attracting-pseudo-ball-lm}, $|U|$ denotes the Lebbesgue measure of $U$.

We next estimate
$CQY_1(t,\theta_{-t}\omega,Y_{10}(\theta_{-t}\omega))$. We find from
\eqref{3i} that
\begin{equation*}
\begin{split}
CQY_1(t,\theta_{-t}\omega,Y_{10}(\theta_{-t}\omega))
&=\int_{0}^{t}e^{C(t-s)}CQF(\theta_{s-t}\omega,Y(s,\theta_{-t}\omega,Y_0(\theta_{-t}\omega)))ds\\
&=\int_{0}^{t}e^{C(t-s)}CF(\theta_{s-t}\omega,Y(s,\theta_{-t}\omega,Y_0(\theta_{-t}\omega)))ds.
\end{split}
\end{equation*}
Then,
\begin{equation}\label{3k}
\|CQY_1(t,\theta_{-t}\omega,Y_{10}(\theta_{-t}\omega))\|_E\leq
\int_{0}^{t}e^{-a(t-s)}\|CF(\theta_{s-t}\omega,Y(s,\theta_{-t}\omega,Y_0(\theta_{-t}\omega)))\|_Eds.
\end{equation}
Obviously,
\begin{equation*}
CF(\theta_{s-t}\omega,Y(s,\theta_{-t}\omega,Y_0(\theta_{-t}\omega)))=\begin{pmatrix}-\sin(Y_u)+f+(1-\alpha)z(\theta_{s-t}\omega)\\\alpha\sin(Y_u)-\alpha
f-\alpha(1-\alpha)-Az(\theta_{s-t}\omega)\end{pmatrix},
\end{equation*}
where $Y_u$ satisfies
$Y(s,\theta_{-t}\omega,Y_0(\theta_{-t}\omega))=(Y_u,Y_v)^{\top}$. By
\eqref{2i}, Lemma \ref{tempered-lm1}, Corollary \ref{tempered-cor1}
and Corollary \ref{tempered-cor2} with $\epsilon=\frac{a}{2}$,
\begin{equation*}
\begin{split}
&\|CF(\theta_{s-t}\omega,Y(s,\theta_{-t}\omega,Y_0(\theta_{-t}\omega)))\|_E^2\\
&\quad\quad\leq
\frac{7}{4}\alpha^2\|\sin(Y_u)\|^2+\frac{7}{4}\alpha^2\|f\|^2+\frac{7}{4}\alpha^2(1-\alpha)^2\|z(\theta_{s-t}\omega)\|^2+4\|Az(\theta_{s-t}\omega)\|^2\\
&\quad\quad\quad+3\|A^{\frac{1}{2}}\sin(Y_u)\|^2+3\|A^{\frac{1}{2}}f\|^2+3(1-\alpha)^2\|A^{\frac{1}{2}}z(\theta_{s-t}\omega)\|^2\\
&\quad\quad\leq
a_3^2+\frac{7}{4}\alpha^2(1-\alpha)^2e^{a(t-s)}(r(\omega))^2+3(1-\alpha)^2e^{a(t-s)}(r'(\omega))^2\\
&\quad\quad\quad+4e^{a(t-s)}(r''(\omega))^2+3\|A^{\frac{1}{2}}\sin(Y_u)\|^2\\
&\quad\quad\leq
\Big(a_3+\frac{\sqrt{7}}{2}\alpha|1-\alpha|e^{\frac{a}{2}(t-s)}r(\omega)+\sqrt{3}|1-\alpha|e^{\frac{a}{2}(t-s)}r'(\omega)\\
&\quad\quad\quad+2e^{\frac{a}{2}(t-s)}r''(\omega)+\sqrt{3}\|A^{\frac{1}{2}}\sin(Y_u)\|\Big)^2,
\end{split}
\end{equation*}
where
$a_3=\sqrt{\frac{7}{4}\alpha^2|U|+\frac{7}{4}\alpha^2\|f\|^2+3\|A^{\frac{1}{2}}f\|^2}$.
Then, \eqref{3k} implies
\begin{equation}
\begin{split}
&\|CQY_1(t,\theta_{-t}\omega,Y_{10}(\theta_{-t}\omega))\|_E\\
&\quad\quad\leq\frac{a_3}{a}(1-e^{-at})+\sqrt{3}\int_{0}^{t}e^{-a(t-s)}\|A^{\frac{1}{2}}\sin(Y_u)\|ds\\
&\quad\quad\quad+\frac{2}{a}\Big(\frac{\sqrt{7}}{2}\alpha|1-\alpha|r(\omega)+\sqrt{3}|1-\alpha|r'(\omega)+2r''(\omega)\Big)(1-e^{-\frac{a}{2}t}).
\end{split}\label{3l}
\end{equation}
For the integral on the right-hand side of (\ref{3l}), we note that
\begin{equation*}
\|A^{\frac{1}{2}}\sin(Y_u)\|\leq\|A^{\frac{1}{2}}Y_u\|\leq
a_4\|QY(s,\theta_{-t}\omega,Y_0(\theta_{-t}\omega))\|_E,
\end{equation*}
where $a_4=\sqrt{2/(2-\delta)}$. Since
\begin{equation*}
\begin{split}
&\|QY(s,\theta_{-t}\omega,Y_0(\theta_{-t}\omega))\|_{E}\\
&\quad\quad\leq
e^{-as}\|QY_0(\theta_{-t}\omega)\|_{E}+\int_{0}^{s}e^{-a(s-\tau)}\|F(\theta_{\tau-t}\omega,Y(\tau,\theta_{-t}\omega,Y_0(\theta_{-t}\omega)))\|_{E}d\tau,
\end{split}
\end{equation*}
we find that
\begin{equation}
\begin{split}
&\int_{0}^{t}e^{-a(t-s)}\|A^{\frac{1}{2}}\sin(Y_u)\|ds\\
&\quad\quad\leq a_4te^{-at}\|QY_0(\theta_{-t}\omega)\|_{E}
+a_4\int_{0}^{t}\int_{0}^{s}e^{-a(t-\tau)}\|F(\theta_{\tau-t}\omega,Y(\tau,\theta_{-t}\omega,Y_0(\theta_{-t}\omega)))\|_{E}d\tau
ds\\
&\quad\quad\leq a_4te^{-at}\|QY_0(\theta_{-t}\omega)\|_{E}\\
&\quad\quad\quad+a_4\int_{0}^{t}\Bigg(\frac{2}{a}\Big(a_1r(\omega)+r'(\omega)\Big)(e^{-\frac{a}{2}(t-s)}-e^{-\frac{a}{2}t})+\frac{a_2}{a}(e^{-a(t-s)}-e^{-at})\Bigg)ds\\
&\quad\quad=a_4te^{-at}\|QY_0(\theta_{-t}\omega)\|_{E}+\frac{2a_4}{a}\Big(a_1r(\omega)+r'(\omega)\Big)\Big(\frac{2}{a}(1-e^{-\frac{a}{2}t})-te^{-\frac{a}{2}t}\Big)\\
&\quad\quad\quad+\frac{a_2a_4}{a}\Big(\frac{1}{a}(1-e^{-at})-te^{-at}\Big).
\end{split}\label{3m}
\end{equation}
Combining \eqref{3j}, \eqref{3l} and \eqref{3m}, there is a
$T(\omega)>0$ such that for all $t\geq T(\omega)$,
\begin{align}
\label{aux-eqq2}
&\|QY_1(t,\theta_{-t}\omega,Y_{10}(\theta_{-t}\omega))\|_{\tilde{E}}\nonumber\\
&\quad\quad=\|QY_1(t,\theta_{-t}\omega,Y_{10}(\theta_{-t}\omega))\|_E+\|CQY_1(t,\theta_{-t}\omega,Y_{10}(\theta_{-t}\omega))\|_E\nonumber\\
&\quad\quad\leq R_1(\omega),
\end{align}

where
$R_1(\omega)=a_5r(\omega)+a_6r'(\omega)+\frac{8}{a}r''(\omega)+a_7$
 is a  tempered random variable, in which
$a_5=\frac{4a_1+2\sqrt{7}\alpha|1-\alpha|}{a}+\frac{8\sqrt{3}a_1a_4}{a^2}$,
$a_6=\frac{4+4\sqrt{3}|1-\alpha|}{a}+\frac{8\sqrt{3}a_4}{a^2}$ and
$a_7=\frac{2a_2+2a_3}{a}+\frac{2\sqrt{3}a_2a_4}{a^2}$.

Now, let $\omega\mapsto B_1(\omega)$ be the random pseudo-ball in
$\tilde{E}$ centered at origin with random radius $\omega\mapsto
R_1(\omega)$, then $\omega\mapsto B_1(\omega)$ is tempered and
measurable. By \eqref{aux-eqq1}, \eqref{aux-eqq2} and
\begin{equation*}
QY(t,\theta_{-t}\omega,\phi_0(\theta_{-t}\omega))=QY_1(t,\theta_{-t}\omega,Y_{10}(\theta_{-t}\omega))+QY_2(t,Y_{20}(\theta_{-t}\omega)),
\end{equation*}
we have for $\omega\in\Omega$,
\begin{equation*}
d_{H}(Y(t,\theta_{-t}\omega,B_0(\theta_{-t}\omega)),B_1(\omega))\rightarrow0\quad\text{as}\quad
t\rightarrow\infty.
\end{equation*}
Then by the compact embedding of $\tilde{E}$ into $E$,
$\omega\mapsto QB_1(\omega)$ is compact in $E_2$, which implies that
$\omega\mapsto\mathbf{B_1}(\omega):={B_1}(\omega)\,\,(mod\,p_0)$ is
a tempered random compact attracting set for $\mathbf{Y}$. Thus by
Theorem \ref{existence-attractor-thm-rds}, $\mathbf{Y}$ has a unique  random
attractor $\omega\mapsto \mathbf{A_0}(\omega)$, where
\begin{equation*}
\mathbf{A_0}(\omega)=\bigcap_{t>0}\overline{\bigcup_{\tau\geq
t}\mathbf{Y}(\tau,\theta_{-\tau}\omega,\mathbf{B_1}(\theta_{-\tau}\omega))},\quad\omega\in\Omega.
\end{equation*}
This completes the proof.
\end{proof}

\begin{remark}
\label{existence-attractor-rk}
\begin{itemize}
\item[(1)] For any $\alpha>0$ and
$\lambda_1=K\tilde{\lambda}_1>0$ (see \eqref{lambdas-eq}), there is
a $\delta\in (0,1]$ such that $a>0$ holds, where $a$ is as in
\eqref{a-eq} and $\tilde{\lambda}_1$ is the smallest positive
eigenvalue of $-\triangle$ and a constant.

\item[(2)] We can say that the random dynamical $Y$(or $\phi$) has a unique
random attractor in the sense that the induced random dynamical
system $\mathbf{Y}$(or $\mathbf{\Phi}$) has a unique random
attractor, and we will say that $Y$(or $\phi$) has a unique random
attractor directly in the sequel. We denote the random attractor of
$Y$ and $\phi$ by $\omega\mapsto A_0(\omega)$ and $\omega\mapsto
A(\omega)$ respectively. Indeed, $\omega\mapsto A_0(\omega)$ and
$\omega\mapsto A(\omega)$ satisfy
\begin{equation*}
\mathbf{A_0}(\omega)=A_0(\omega)\,\,(mod\,p_0),\quad
\mathbf{A}(\omega)=A(\omega)\,\,(mod\,p_0),\quad\omega\in\Omega.
\end{equation*}

\item[(3)] For the deterministic damped sine-Gordon equation with homogeneous
Neumann boundary condition, the authors proved in \cite{19} that the
random attractor is a horizontal curve provided that $\alpha$ and
$K$ are sufficiently large. Similarly, we expect that the random
attractor $\omega\mapsto A(\omega)$ of $\phi$ has the similar
property, i.e., $A(\omega)$ is a horizontal curve for each
$\omega\in\Omega$ provided that $\alpha$ and $K$ are sufficiently
large. We prove that this is true in next section.

\item[(4)] By (2),  system \eqref{main-eq}-\eqref{main-bc} is dissipative
(i.e. it possesses a random attractor). In  section 6, we will show
that \eqref{main-eq}-\eqref{main-bc} with sufficiently large
$\alpha$ and $K$ also has a rotation number and hence all the
solutions tend to oscillate with the same frequency eventually.
\end{itemize}
\end{remark}

\section{One-dimensional Random Attractor}

\qquad In this section, we apply the theory established in \cite{4}
to show that the random attractor of $Y$ (or $\phi$) is
one-dimensional provided that $\alpha$ and $K$ are sufficiently
large. This method has been used by Chow, Shen and Zhou \cite{3} to
systems of coupled noisy oscillators. Throughout this section we
assume that $p_0=2\pi\eta_0=(2\pi,0)^{\top}\in E_{1}$ and $a>4L_{F}$
(see \eqref{a-eq}  for the definition of  $a$ and see
\eqref{lipschitz-constant-f} for the upper bound of $L_F$). We
remark in the end of this section that this condition can be
satisfied provided that $\alpha$ and $K$ are sufficiently large.

\begin{definition}\label{horizontal-curve-def}
Suppose $\{\Phi^{\omega}\}_{\omega\in\Omega}$ is a family of maps
from $E_1$ to $E_2$ and $n\in\NN$. A family of graphs $\omega\mapsto
\ell(\omega)\equiv\{(p,\Phi^{\omega}(p)):p\in E_{1}\}$ is said to be
a random $np_0$-periodic horizontal curve if $\omega\mapsto
\ell(\omega)$ is a random set and
$\{\Phi^{\omega}\}_{\omega\in\Omega}$ satisfy the Lipshcitz
condition
\begin{equation*}
\|\Phi^{\omega}(p_1)-\Phi^{\omega}(p_2)\|_{E}\leq
\|p_1-p_2\|_{E}\quad\text{for all}\quad p_1,p_2\in
E_1,\,\,\omega\in\Omega
\end{equation*}
and the periodic condition
\begin{equation*}
\Phi^{\omega}(p+np_0)=\Phi^{\omega}(p)\quad\text{for all}\quad p\in
E_1,\,\,\omega\in\Omega.
\end{equation*}
\end{definition}

Clearly, for any $\omega\in\Omega$, $\ell(\omega)$ is a
deterministic $np_0$-periodic horizontal curve. When $n=1$, we
simply call it a horizontal curve.

\begin{lemma}\label{horizontal-curve-lm}
Let $a>4L_{F}$. Suppose that $\omega\mapsto \ell(\omega)$ is a
random $np_0$-periodic horizontal curve in $E$. Then, $\omega\mapsto
Y(t,\omega,\ell(\omega))$ is also a random $np_0$-periodic
horizontal curve in $E$ for all $t>0$. Moreover, $\omega\mapsto
Y(t,\theta_{-t}\omega,\ell(\theta_{-t}\omega))$ is a random
$np_0$-periodic horizontal curve for all $t>0$.
\end{lemma}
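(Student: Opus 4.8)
The plan is to show that the nonlinear flow $Y(t,\omega,\cdot)$ leaves the horizontal cone invariant and acts as a homeomorphism in the $E_1$-direction, so that the image of a Lipschitz graph over $E_1$ is again such a graph. Fix $\omega$ and write $\ell(\omega)=\{(p,\Phi^\omega(p)):p\in E_1\}$. Take two base points $p_1\neq p_2$ in $E_1$, set $Y_0^i=(p_i,\Phi^\omega(p_i))$, and let $W(t)=Y(t,\omega,Y_0^1)-Y(t,\omega,Y_0^2)$. Decomposing $W=PW+QW=:p(t)+q(t)$ with $p(t)\in E_1$ and $q(t)\in E_2$, and using that $E_1$ and $E_2$ are $C$-invariant (so $P,Q$ commute with $C$) together with $C\eta_0=0$, one gets $\dot p=PG$ and $\dot q=Cq+QG$, where $G(t)=F(\theta_t\omega,Y(t,\omega,Y_0^1))-F(\theta_t\omega,Y(t,\omega,Y_0^2))$ obeys $\|G\|_E\leq L_F\|W\|_E$ by \eqref{lipschitz-constant-f}.

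First I would derive two differential inequalities. Since $\|PG\|_E,\|QG\|_E\leq L_F\|W\|_E$ and $\langle CY,Y\rangle_E\leq-a\|Y\|_E^2$ on $D(C)\cap E_2$ by Lemma \ref{projection-lm}, a direct computation of $\tfrac{d}{dt}\|p\|_E^2$ and $\tfrac{d}{dt}\|q\|_E^2$ gives $\tfrac{d}{dt}\|p\|_E\geq-L_F\|W\|_E$ and $\tfrac{d}{dt}\|q\|_E\leq-a\|q\|_E+L_F\|W\|_E$; these are first established for $Y_0^i\in D(C)$, where the solutions are classical, and then extended to all of $E$ by density of $\tilde E$ and continuity of the flow in $E$. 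On the boundary of the cone, where $\|q\|_E=\|p\|_E=:\rho>0$, the triangle inequality $\|W\|_E\leq\|p\|_E+\|q\|_E=2\rho$ yields $\tfrac{d}{dt}(\|q\|_E-\|p\|_E)\leq(-a+4L_F)\rho<0$ precisely because $a>4L_F$, so the boundary is strictly inflowing and the horizontal cone $\{\|q\|_E\leq\|p\|_E\}$ is positively invariant. Inside the cone $\|W\|_E\leq2\|p\|_E$, hence $\tfrac{d}{dt}\|p\|_E\geq-2L_F\|p\|_E$ and $\|p(t)\|_E\geq e^{-2L_Ft}\|p_1-p_2\|_E>0$, so no degeneracy occurs. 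Cone invariance gives the Lipschitz bound $\|q(t)\|_E\leq\|p(t)\|_E$.

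Next I would prove the image is a genuine graph over $E_1$. Set $g:E_1\to E_1$, $g(p)=PY(t,\omega,(p,\Phi^\omega(p)))$, so that $\|g(p_1)-g(p_2)\|_E=\|p(t)\|_E$. The lower bound above makes $g$ injective, hence, on the one-dimensional space $E_1\cong\RR$, a continuous strictly monotone map. The periodicity is the decisive ingredient: by periodicity of $\Phi^\omega$ and Lemma \ref{periodicity-lm}, $Y(t,\omega,Y_0+np_0)=Y(t,\omega,Y_0)+np_0$, whence $g(p+np_0)=g(p)+np_0$; writing $g(p)-p$ this shows $g-\mathrm{id}$ is $np_0$-periodic and bounded, so $g$ is onto $E_1$ and thus a homeomorphism. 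I then define $\Phi^{\omega,t}:E_1\to E_2$ by $\Phi^{\omega,t}=QY(t,\omega,(\,\cdot\,,\Phi^\omega(\cdot)))\circ g^{-1}$, whose graph is exactly $Y(t,\omega,\ell(\omega))$. Its Lipschitz bound is the cone estimate, and $g(p+np_0)=g(p)+np_0$ with $Q(np_0)=0$ gives $\Phi^{\omega,t}(\tilde p+np_0)=\Phi^{\omega,t}(\tilde p)$. Measurability of $Y$ in $\omega$ and of $\ell$ ensures $\omega\mapsto Y(t,\omega,\ell(\omega))$ is a random set, settling the first assertion; the "moreover" statement follows by running the same argument with $\omega$ replaced by $\theta_{-t}\omega$, the curve property holding $\omega$-wise and measurability being preserved under the measurable maps $\theta_{-t}$ and $Y(t,\cdot,\cdot)$.

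I expect the main obstacle to be showing the image is a graph — specifically the surjectivity of $g$ — which is exactly where the translation invariance of Lemma \ref{periodicity-lm} is indispensable. By contrast, the cone-invariance computation, once the regularization step justifying differentiation of the norms is in place and the threshold $a>4L_F$ is recognized as making the cone boundary strictly inflowing, is comparatively routine.
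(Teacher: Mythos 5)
Your proof is correct and follows essentially the same route as the paper's: the same $p$--$q$ decomposition of the difference of two solutions, the same differential inequalities obtained from Lemma \ref{projection-lm} and the Lipschitz bound \eqref{lipschitz-constant-f}, the same strict-inflow argument on the cone boundary $\|q\|_E=\|p\|_E$ using $a>4L_F$, the same appeal to Lemma \ref{periodicity-lm} for the periodic condition, and the same density-in-$D(C)$ and fixed-$\bar\omega=\theta_{-t}\omega$ reductions for the approximation and ``moreover'' steps. If anything, you are more complete than the paper on two points it leaves implicit: the Gronwall bound $\|p(t)\|_E\geq e^{-2L_F t}\|p_1-p_2\|_E$ inside the cone, which substantiates the paper's bare assertion that $\|p(t,\omega)\|_E\neq0$, and the monotone-plus-$np_0$-equivariant argument that the base map $g$ is onto $E_1$, so that the image is genuinely a graph over all of $E_1$ rather than merely a set satisfying the cone estimate.
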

\begin{proof}
First, since $Y$ is a random dynamical system and $\omega\mapsto
\ell(\omega)$ is a random set in $E$, $\omega\mapsto
Y(t,\omega,\ell(\omega))$ and $\omega\mapsto
Y(t,\theta_{-t}\omega,\ell(\theta_{-t}\omega))$ are random sets in
$E$ for all $t>0$. We next show the Lipschitz condition and periodic
condition.

It is sufficient to prove the Lipschitz condition and periodic
condition valid for $\omega\mapsto \ell(\omega)$ in $D(C)$ since
$D(C)$ is dense in $E$. Clearly, for $\omega\in\Omega$ and $t>0$,
\begin{equation*}
Y(t,\omega,\ell(\omega))=\{(PY(t,\omega,p+\Phi^{\omega}(p)),QY(t,\omega,p+\Phi^{\omega}(p))):p\in
E_{1}\cap D(C)\}.
\end{equation*}
For $p_1,p_2\in E_1\cap D(C)$, $p_1\neq p_2$, let
$Y_i(t,\omega)=Y(t,\omega,p_i+\Phi^\omega(p_i))$, $i=1,2$,
$p(t,\omega)=P(Y_1(t,\omega)-Y_2(t,\omega))$ and
$q(t,\omega)=Q(Y_1(t,\omega)-Y_2(t,\omega))$, where $P$, $Q$ are
defined as in section 1. We have by Lemma \ref{projection-lm}
\begin{equation*}
\begin{split}
PY_i(t,\omega)&=e^{Ct}P(p_i+\Phi^\omega(p_i))+\int_{0}^{t}e^{C(t-s)}PF(\theta_s\omega,Y_i(s,\omega))ds\\
&=P(p_i+\Phi^\omega(p_i))+\int_{0}^{t}PF(\theta_s\omega,Y_i(s,\omega))ds,
\quad i=1,2,
\end{split}
\end{equation*}
and then,
$\frac{d}{dt}PY_i(t,\omega)=PF(\theta_t\omega,Y_i(t,\omega)),\,\,
i=1,2$, it then follows that
\begin{equation}
\begin{split}
\frac{d}{dt}p(t,\omega)&=\frac{d}{dt}P(Y_1(t,\omega)-Y_2(t,\omega))\\
&=P(F(\theta_t\omega,Y_1(t,\omega))-F(\theta_t\omega,Y_2(t,\omega))).
\end{split}\label{4a}
\end{equation}
Since $p(t,\omega)+q(t,\omega)=Y_1(t,\omega)-Y_2(t,\omega)$,
\begin{equation*}
\begin{split}
\frac{d}{dt}(p(t,\omega)+q(t,\omega))
&=\frac{d}{dt}(Y_1(t,\omega)-Y_2(t,\omega))\\
&=C(Y_1(t,\omega)-Y_2(t,\omega))+F(\theta_t\omega,Y_1(t,\omega))-F(\theta_t\omega,Y_2(t,\omega)),
\end{split}
\end{equation*}
then, by the orthogonal decomposition,
\begin{equation}
\begin{split}
\frac{d}{dt}q(t,\omega)&=C(Y_1(t,\omega)-Y_2(t,\omega))+Q(F(\theta_t\omega,Y_1(t,\omega))-F(\theta_t\omega,Y_2(t,\omega)))\\
&=Cq(t,\omega)+Q(F(\theta_t\omega,Y_1(t,\omega))-F(\theta_t\omega,Y_2(t,\omega))).
\end{split}\label{4b}
\end{equation}

We find from \eqref{4a} that
\begin{equation*}
\begin{split}
\frac{d}{dt}\|p(t,\omega)\|_{E}^{2}&=2\big\langle
p(t,\omega),\frac{d}{dt}p(t,\omega)\big\rangle_{E}\\
&\geq-2\|p(t,\omega)\|_{E}\cdot\|P(F(\theta_t\omega,Y_1(t,\omega))-F(\theta_t\omega,Y_2(t,\omega)))\|_{E}\\
&\geq-2L_{F}(\|p(t,\omega)\|_{E}^2+\|p(t,\omega)\|_{E}\|q(t,\omega)\|_{E}).
\end{split}
\end{equation*}
Similarly, by \eqref{4b} and Lemma \ref{projection-lm},
\begin{equation*}
\frac{d}{dt}\|q(t,\omega)\|_{E}^{2}\leq-2a\|q(t,\omega)\|_{E}^{2}+2L_{F}(\|p(t,\omega)\|_{E}\|q(t,\omega)\|_{E}+\|q(t,\omega)\|_{E}^2).
\end{equation*}
Because $a>4L_{F}$, if there is a $t_0\geq0$ such that
$\|q(t_0,\omega)\|_{E}=\|p(t_0,\omega)\|_{E}$ and since
$\|p(t,\omega)\|_{E}\neq0$ for $t\geq0$, then
\begin{equation*}
\frac{d}{dt}\Big|_{t=t_0}\Big(\|q(t,\omega)\|_{E}^{2}-\|p(t,\omega)\|_{E}^{2}\Big)\leq(8L_{F}-2a)\|q(t_0,\omega)\|_{E}^{2}<0,
\end{equation*}
which means that there is a $\bar{t}_0>t_0$ such that for
$t\in(t_0,\bar{t}_0)$,
\begin{equation*}
\begin{split}
\|q(t,\omega)\|_{E}^2-\|p(t,\omega)\|_{E}^2&<\|q(0,\omega)\|_{E}^2-\|p(0,\omega)\|_{E}^2\\
&=\|\Phi^{\omega}(p_1)-\Phi^{\omega}(p_2)\|_{E}^2-\|p_1-p_2\|_{E}^2\\
&\leq0,
\end{split}
\end{equation*}
namely, $\|q(t,\omega)\|_{E}<\|p(t,\omega)\|_{E}$ for
$t\in(t_0,\bar{t}_0)$.

If there is another $t_1\geq \bar{t}_0$ such that
$\|q(t_1,\omega)\|_{E}=\|p(t_1,\omega)\|_{E}$, then
\begin{equation*}
\frac{d}{dt}\Big|_{t=t_1}\Big(\|q(t,\omega)\|_{E}^{2}-\|p(t,\omega)\|_{E}^{2}\Big)\leq(8L_{F}-2a)\|q(t_1,\omega)\|_{E}^{2}<0,
\end{equation*}
which means that there is a $\bar{t}_1>t_1$ such that for
$t\in(t_1,\bar{t}_1)$, $\|q(t,\omega)\|_{E}<\|p(t,\omega)\|_{E}$.
Continue this process, we have for all $t\geq0$,
$\|q(t,\omega)\|_{E}\leq\|p(t,\omega)\|_{E}$, i.e.,
\begin{equation*}
\|Q(Y_1(t,\omega)-Y_2(t,\omega))\|_{E}\leq\|P(Y_1(t,\omega)-Y_2(t,\omega))\|_{E},
\end{equation*}
which shows that $\omega\mapsto Y(t,\omega,\ell(\omega))$ satisfies
the Lipschitz condition in Definition \ref{horizontal-curve-def}.

We next show the periodic condition. We find from Lemma
\ref{periodicity-lm} that
\begin{equation*}
Y(t,\omega,p+\Phi^{\omega}(p))+np_0=Y(t,\omega,p+np_0+\Phi^{\omega}(p))..
\end{equation*}
Since $\Phi^{\omega}(p)=\Phi^{\omega}(p+np_0)$,
$Y(t,\omega,p+\Phi^{\omega}(p))+np_0=Y(t,\omega,p+np_0+\Phi^{\omega}(p+np_0))$.
It follows that
\begin{equation*}
QY(t,\omega,p+\Phi^{\omega}(p))=QY(t,\omega,p+np_0+\Phi^{\omega}(p+np_0)).
\end{equation*}
Consequently, $\omega\mapsto Y(t,\omega,\ell(\omega))$ is a random
$np_0$-periodic horizontal curve for all $t>0$.

Moreover, for any fixed $\omega\in\Omega$ and $t>0$,
$\bar{\omega}=\theta_{-t}\omega\in\Omega$ is fixed. Then,
$Y(t,\bar{\omega},\ell(\bar{\omega}))$ is a deterministic
$np_0$-periodic horizontal curve, which yields the assertion.
\end{proof}

Choose $\gamma\in(0,\frac{a}{2})$ such that
\begin{equation}\label{4c}
\frac{2}{\alpha}\Bigg(\frac{1}{\gamma}+\frac{1}{a-2\gamma}\Bigg)<1,
\end{equation}
where $\frac{2}{\alpha}$ is the upper bound of the Lipschitz
constant of $F$ (see \eqref{lipschitz-constant-f}). We remark in the
end of this section that such a $\gamma$ exists provided that
$\alpha$ and $K$ are sufficiently large. We next show the main
result in this section.

\begin{theorem}\label{one-dimension-thm}
Assume that $a>4L_{F}$ and that there is a
$\gamma\in(0,\frac{a}{2})$ such that \eqref{4c} holds. Then the
random attractor $\omega\mapsto A_0(\omega)$ of the random dynamical
system $Y$ is a random horizontal curve.
\end{theorem}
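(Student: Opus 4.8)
The plan is to realize $A_0(\omega)$ as the limit, under the pullback graph transform, of a reference random horizontal curve, and then to identify that limit with the attractor. I would start from the trivial curve $\ell_0(\omega)=\{(p,0):p\in E_1\}$, the graph of $\Phi^\omega\equiv 0$, which is manifestly a random $p_0$-periodic horizontal curve. By Lemma \ref{horizontal-curve-lm} (here the hypothesis $a>4L_F$ is what keeps us inside the class), each pullback image $\ell_t(\omega):=Y(t,\theta_{-t}\omega,\ell_0(\theta_{-t}\omega))$ is again a random $p_0$-periodic horizontal curve, the graph of a $1$-Lipschitz, $p_0$-periodic map $\Phi_t^\omega:E_1\to E_2$. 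The whole problem then reduces to showing that $\{\Phi_t^\omega\}_{t>0}$ converges, uniformly in $p$ as $t\to\infty$, to a limit $\Phi_\infty^\omega$, and that the resulting curve $\ell_\infty(\omega)=\{(p,\Phi_\infty^\omega(p)):p\in E_1\}$ equals $A_0(\omega)$.

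For the convergence I would run the Lyapunov–Perron fixed-point scheme adapted to the splitting $E=E_1\oplus E_2$. Writing a trajectory on a horizontal curve as $Y(s)=p(s)+q(s)$ with $p(s)\in E_1$ and $q(s)=\Phi^{\theta_s\omega}(p(s))\in E_2$, Lemma \ref{projection-lm} supplies $e^{Ct}P=P$ on the base and $\|e^{Ct}Q\|\le e^{-at}$ on the fiber, so the base motion is driven only by $PF$ while the fiber motion contracts at rate $a$. The variation-of-constants formula represents $\Phi^\omega$ through an integral operator whose Lipschitz constant, in an exponentially weighted ($e^{\gamma s}$) norm, is bounded by $\frac{2}{\alpha}\big(\frac{1}{\gamma}+\frac{1}{a-2\gamma}\big)$, where $L_F\le\frac{2}{\alpha}$ is \eqref{lipschitz-constant-f}, the $\frac{1}{\gamma}$ comes from the (center) base integral and the $\frac{1}{a-2\gamma}$ from the contracting fiber integral. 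Condition \eqref{4c} makes this constant strictly less than $1$, so the pullback graph transform is a contraction on the complete metric space of $1$-Lipschitz $p_0$-periodic graphs. This yields both a unique fixed point and $\|\Phi_t^\omega-\Phi_\infty^\omega\|_\infty\to 0$; the uniform limit $\Phi_\infty^\omega$ is again $1$-Lipschitz, $p_0$-periodic, and defined on all of $E_1$, so $\ell_\infty(\omega)$ is a random horizontal curve in the sense of Definition \ref{horizontal-curve-def}, and it is invariant, $Y(t,\omega,\ell_\infty(\omega))=\ell_\infty(\theta_t\omega)$.

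It remains to identify $\ell_\infty(\omega)$ with $A_0(\omega)$, which I would do by a two-sided convergence argument on the torus $\mathbf E$. On one hand $\mathbf{\ell_0}(\omega)$ is bounded, hence tempered, so the attraction property of $\mathbf{A_0}$ (Theorem \ref{existence-attractor-thm}, via absorption into the pseudo-ball $B_0(\omega)$ of Lemma \ref{existence-attracting-pseudo-ball-lm}) gives $d_H(\mathbf Y(t,\theta_{-t}\omega,\mathbf{\ell_0}(\theta_{-t}\omega)),\mathbf{A_0}(\omega))\to 0$. On the other hand the uniform convergence $\Phi_t^\omega\to\Phi_\infty^\omega$ over the compact $\mathbb{T}^1$ gives $\mathbf{\ell_t}(\omega)\to\mathbf{\ell_\infty}(\omega)$ in Hausdorff distance. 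Since a Hausdorff limit is unique, $\mathbf{\ell_\infty}(\omega)=\mathbf{A_0}(\omega)$, whence $A_0(\omega)=\ell_\infty(\omega)$ is a random horizontal curve. Note that surjectivity onto $\mathbb{T}^1$ and single-valuedness, which one would otherwise have to argue separately, are here automatic because $\ell_\infty$ is by construction a graph over all of $E_1$.

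The main obstacle is the contraction estimate of the second paragraph. Its difficulty is that the base and fiber motions are coupled through $F$: the point of $\ell_t(\omega)$ lying over a prescribed $p\in E_1$ originates from a base point that itself drifts, so the fiber distance at fixed $p$ cannot be estimated directly and must be tracked through the flow. Choosing the weight $\gamma\in(0,\tfrac{a}{2})$ and balancing the forward base integral against the contracting fiber integral so that their combined Lipschitz contribution falls below $1$ is exactly what \eqref{4c} encodes; making the two exponential integrals and the constant $\frac{2}{\alpha}$ line up is the technical heart of the proof. Everything else — measurability of $\omega\mapsto\ell_\infty(\omega)$, periodicity, and the $1$-Lipschitz bound — is inherited from Lemma \ref{horizontal-curve-lm} and the fixed-point construction.
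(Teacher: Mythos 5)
Your overall strategy is recognizably a cousin of the paper's: the paper also builds the limiting curve by a weighted-norm fixed-point argument with operator bound $\frac{1}{\gamma}+\frac{1}{a-2\gamma}$ and Lipschitz constant $\frac{2}{\alpha}$ (this is exactly what \eqref{4c} is for), and it also closes by taking the pullback limit of a constant horizontal curve and invoking Lemma \ref{horizontal-curve-lm}. But your identification of $\ell_\infty(\omega)$ with $A_0(\omega)$ has a genuine gap. The attraction property of the random attractor is a statement about the Hausdorff \emph{semi}-distance: $d_H(\mathbf{Y}(t,\theta_{-t}\omega,\mathbf{\ell_0}(\theta_{-t}\omega)),\mathbf{A_0}(\omega))\rightarrow 0$ only says the pullback curves eventually lie in small neighborhoods of $\mathbf{A_0}(\omega)$. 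Combined with your full Hausdorff convergence $\mathbf{\ell_t}(\omega)\rightarrow\mathbf{\ell_\infty}(\omega)$ this gives the inclusion $\mathbf{\ell_\infty}(\omega)\subset\mathbf{A_0}(\omega)$, not equality; ``uniqueness of Hausdorff limits'' does not apply, because nothing you have proved shows $d_H(\mathbf{A_0}(\omega),\mathbf{\ell_t}(\omega))\rightarrow 0$. For the reverse inclusion you must show that \emph{arbitrary} points --- in particular the points of $\mathbf{A_0}(\omega)$ itself, which a priori need not lie on any horizontal curve --- are attracted to the limiting curve. This is precisely where the paper does its real work: it constructs the stable foliation $\{W_s(\omega,\xi+h(\omega,\xi))\}$ by solving the second integral equation \eqref{4h} (Theorem 3.4 of \cite{4}) and derives the exponential tracking estimate \eqref{4m}, which says every initial condition exponentially shadows a trajectory on the invariant curve $W(\omega)$; only after this does $A_0(\omega)=W(\omega)$ follow, and only then does the closing step (the pullback limit of a reference curve fills out $A_0$ because $A_0$ is already known to be a graph over $E_1$) go through.

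The gap is fixable inside your framework, but it needs an explicit argument rather than the limit-uniqueness appeal: for instance, take a complete orbit $\mathbf{y}=\mathbf{Y}(t,\theta_{-t}\omega,\mathbf{y}_{-t})$ with $\mathbf{y}_{-t}\in\mathbf{A_0}(\theta_{-t}\omega)$, pass through each $\mathbf{y}_{-t}$ the constant horizontal curve at height $Q\mathbf{y}_{-t}$, and use your contraction \emph{uniformly} over initial curves with tempered fiber bounds (available since $\mathbf{A_0}$ is tempered) to conclude $d(\mathbf{y},\mathbf{\ell_\infty}(\omega))\le C(\omega)e^{-\kappa t}\rightarrow 0$. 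Note also that the contraction estimate itself --- that the time-$t$ pullback graph transform contracts in the sup metric with constant governed by $\frac{2}{\alpha}\big(\frac{1}{\gamma}+\frac{1}{a-2\gamma}\big)$ --- is asserted rather than proved in your writeup; as you yourself note, tracking fiber distances over drifting base points is the technical heart, and the paper sidesteps exactly this bookkeeping by formulating the problem as the Lyapunov--Perron equations \eqref{4d} and \eqref{4h} on exponentially weighted spaces and quoting \cite{4} for their solvability.
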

\begin{proof}
By the equivalent relation between $\phi$ and $Y$, we mainly focus
on equation \eqref{2c}, which can be viewed as a deterministic
system with a random parameter $\omega\in\Omega$. We write it here
as \eqref{2c}$_{\omega}$ for some fixed $\omega\in\Omega$.

Observe that the linear part of \eqref{2c}$_{\omega}$, i.e.
\begin{equation}\label{aux-eqq3}
\dot Y=CY
\end{equation}
has a one-dimensional center space $E^c={\rm span}\{(1,0)\}=E_1$ and
a one co-dimensional stable space $E^s=E_2$. We first show that \eqref{2c}$_{\omega}$
has a one-dimensional invariant manifold, denoted by $W(\omega)$,  and will show later that $W(\omega)$
exponentially attracts all the solutions of \eqref{2c}$_{\omega}$.

Let $F^{\omega}(t,Y)=F(\theta_{t}\omega, Y)$, $\omega\in\Omega$. For
fixed $\omega\in\Omega$, consider the following integral equation
\begin{equation}\label{4d}
\tilde{Y}(t)=e^{Ct}\xi+\int_{0}^{t}e^{C(t-s)}PF^{\omega}(s,\tilde{Y}(s))ds+\int_{-\infty}^{t}e^{C(t-s)}QF^{\omega}(s,\tilde{Y}(s))ds,\quad
t\leq0,
\end{equation}
where $\xi=P\tilde{Y}(0)\in E_1$. For $g:(-\infty,0]\rightarrow E$
such that $\sup_{t\leq0}\|e^{\gamma t}g(t)\|_{E}<\infty$, define
\begin{equation*}
(Lg)(t)=\int_{0}^{t}e^{C(t-s)}Pg(s)ds+\int_{-\infty}^{t}e^{C(t-s)}Qg(s)ds,
\quad t\leq0.
\end{equation*}
It is easy to see that
\begin{equation*}
\sup_{t\leq0}\|e^{\gamma
t}(Lg)(t)\|_{E}\leq\Bigg(\frac{1}{\gamma}+\frac{1}{a-\gamma}\Bigg)\sup_{t\leq0}\|e^{\gamma
t}g(t)\|_E\leq\Bigg(\frac{1}{\gamma}+\frac{1}{a-2\gamma}\Bigg)\sup_{t\leq0}\|e^{\gamma
t}g(t)\|_E,
\end{equation*}
which means that $\|L\|\leq\frac{1}{\gamma}+\frac{1}{a-2\gamma}$.
Then, Theorem 3.3 in \cite{4} shows that for any $\xi\in E_1$,
equation \eqref{4d} has a unique solution
$\tilde{Y}^{\omega}(t,\xi)$ satisfying $\sup_{t\leq0}\|e^{\gamma
t}\tilde{Y}^{\omega}(t,\xi)\|_{E}<\infty$. Let
\begin{equation*}
h(\omega,\xi)=Q\tilde{Y}^{\omega}(0,\xi)=\int_{-\infty}^{0}e^{-Cs}QF^{\omega}(s,\tilde{Y}^{\omega}(s,\xi))ds,\quad
\omega\in\Omega.
\end{equation*}
Let
\begin{equation*}
\begin{split}
W(\omega)=\{\xi+h(\omega,\xi):\xi\in E_1\},\quad\omega\in\Omega.
\end{split}
\end{equation*}
For any $\epsilon\in(0,\gamma)$ in Lemma \ref{tempered-lm1} and
Corollary \ref{tempered-cor1}, we have
\begin{equation}\label{4e}
\|h(\theta_{-t}\omega,\xi)\|_E\leq\frac{1}{a-\epsilon}(a_1r(\omega)+r'(\omega))e^{\epsilon
t}+\frac{a_2}{a},\quad t\geq0.
\end{equation}

Observe that
\begin{equation*}
\begin{split}
\tilde{Y}^{\omega}(t,\xi)&=e^{Ct}\xi+\int_{0}^{t}e^{C(t-s)}PF^{\omega}(s,\tilde{Y}^{\omega}(s,\xi))ds+\int_{-\infty}^{t}e^{C(t-s)}QF^{\omega}(s,\tilde{Y}(s,\omega,\xi))ds\\
&=e^{Ct}(\xi+h(\omega,\xi))+\int_{0}^{t}e^{C(t-s)}F^{\omega}(s,\tilde{Y}^{\omega}(s,\xi))ds,
\end{split}
\end{equation*}
i.e., $\tilde{Y}^{\omega}(t,\xi)$ is the solution of \eqref{2c} with
initial data $\xi+h(\omega,\xi)$ for $t\leq0$. Thus, for
$Y_0(\omega)=\xi+h(\omega,\xi)\in W(\omega)$, there is a
negative continuation of $Y(t,\omega,Y_0(\omega))$, i.e.,
\begin{equation}\label{4f}
Y(t,\omega,Y_0(\omega))=\tilde{Y}^{\omega}(t,\xi),\quad t\leq0.
\end{equation}
Moreover, for $t\geq0$, we obtain from \eqref{2d} and \eqref{4f}
that
\begin{equation*}
\begin{split}
&Y(t,\omega,Y_0(\omega))\\
&\quad\quad=e^{Ct}(\xi+h(\omega,\xi))+\int_{0}^{t}e^{C(t-s)}F(\theta_s\omega,Y(s,\omega,Y_0(\omega)))ds\\
&\quad\quad=e^{Ct}\xi+\int_{0}^{t}e^{C(t-s)}F(\theta_s\omega,Y(s,\omega,Y_0(\omega)))ds+\int_{-\infty}^{0}e^{C(t-s)}QF^{\omega}(s,\tilde{Y}^{\omega}(s,\xi))ds\\
&\quad\quad=e^{Ct}\xi+\int_{0}^{t}e^{C(t-s)}F(\theta_s\omega,Y(s,\omega,Y_0(\omega)))ds+\int_{-\infty}^{0}e^{C(t-s)}QF(\theta_s\omega,Y(s,\omega,Y_0(\omega)))ds\\
&\quad\quad=e^{Ct}\xi+\int_{0}^{t}e^{C(t-s)}PF(\theta_s\omega,Y(s,\omega,Y_0(\omega)))ds+\int_{-\infty}^{t}e^{C(t-s)}QF(\theta_s\omega,Y(s,\omega,Y_0(\omega)))ds\\
&\quad\quad=e^{Ct}\Big(\xi+\int_{0}^{t}e^{-Cs}PF(\theta_s\omega,Y(s,\omega,Y_0(\omega)))ds\Big)+\int_{-\infty}^{0}e^{-Cs}QF(\theta_{t+s}\omega,Y(t+s,\omega,Y_0(\omega)))ds.
\end{split}
\end{equation*}
Then by the uniqueness of solution of \eqref{4d} for fixed
$\omega\in\Omega$, we have
\begin{equation*}
\begin{split}
&h\Big(\theta_{t}\omega,e^{Ct}\Big(\xi+\int_{0}^{t}e^{-Cs}PF(\theta_s\omega,Y(s,\omega,Y_0(\omega)))ds\Big)\Big)\\
&\quad\quad=\int_{-\infty}^{0}e^{-Cs}QF(\theta_{t+s}\omega,Y(t+s,\omega,Y_0(\omega)))ds,
\end{split}
\end{equation*}
and then for $t\geq0$,
\begin{equation}\label{4g}
Y(t,\omega,W(\omega))=W(\theta_t\omega).
\end{equation}
By \eqref{4f} and \eqref{4g}, $W(\omega)$ is an invariant manifold
of \eqref{2c}$_{\omega}$.

Next, we show that $W(\omega)$ attracts the solutions of
\eqref{2c}$_{\omega}$, more precisely, for the given
$\omega\in\Omega$, we prove the existence of a stable foliation
$\{W_s(\omega,Y_0):Y_0\in W(\omega)\}$ of the invariant manifold
$W(\omega)$ of \eqref{2c}$_{\omega}$. Consider the following
integral equation
\begin{equation}
\begin{split}
\hat{Y}(t)&=e^{Ct}\eta+\int_{0}^{t}e^{C(t-s)}Q\Big(F^{\omega}(s,\hat{Y}(s)+Y^{\omega}(s,\xi+h(\omega,\xi)))\\
&\quad\quad\quad\quad\quad\quad\quad\quad\quad-F^{\omega}(s,Y^{\omega}(s,\xi+h(\omega,\xi)))\Big)ds\\
&\quad+\int_{\infty}^{t}e^{C(t-s)}P\Big(F^{\omega}(s,\hat{Y}(s)+Y^{\omega}(s,\xi+h(\omega,\xi)))\\
&\quad\quad\quad\quad\quad\quad\quad-F^{\omega}(s,Y^{\omega}(s,\xi+h(\omega,\xi)))\Big)ds,\quad
t\geq0,
\end{split}\label{4h}
\end{equation}
where $\xi+h(\omega,\xi)\in W(\omega)$, $\eta=Q\hat{Y}(0)\in
E_2$ and
$Y^{\omega}(t,\xi+h(\omega,\xi)):=Y(t,\omega,\xi+h(\omega,\xi))$,
$t\geq0$ is the solution of \eqref{2c} with initial data
$\xi+h(\omega,\xi)$ for fixed $\omega\in\Omega$. Theorem 3.4 in
\cite{4} shows that for any $\xi\in E_1$ and $\eta\in E_2$, equation
\eqref{4h} has a unique solution $\hat{Y}^{\omega}(t,\xi,\eta)$
satisfying $\sup_{t\geq0}\|e^{\gamma
t}\hat{Y}^{\omega}(t,\xi,\eta)\|_{E}<\infty$ and for any $\xi\in
E_1$, $\eta_1,\,\eta_2\in E_2$,
\begin{equation}\label{4i}
\sup_{t\geq0}e^{\gamma
t}\|\hat{Y}^{\omega}(t,\xi,\eta_1)-\hat{Y}^{\omega}(t,\xi,\eta_2)\|_{E}\leq
M\|\eta_1-\eta_2\|_{E}.
\end{equation}
where
$M=\frac{1}{1-\frac{2}{\alpha}\big(\frac{1}{\gamma}+\frac{1}{a-2\gamma}\big)}$.
Let
\begin{equation*}
\begin{split}
\hat{h}(\omega,\xi,\eta)&=\xi+P\hat{Y}^{\omega}(0,\xi,\eta)\\
&=\xi+\int_{\infty}^{0}e^{-Cs}P\Big(F^{\omega}(s,\hat{Y}^{\omega}(s,\xi,\eta)+Y^{\omega}(s,\xi+h(\omega,\xi)))\\
&\quad\quad\quad\quad\quad\quad\quad-F^{\omega}(s,Y^{\omega}(s,\xi+h(\omega,\xi)))\Big)ds.
\end{split}
\end{equation*}
Then,
$W_{s}(\omega,\xi+h(\omega,\xi))=\{\eta+h(\omega,\xi)+\hat{h}(\omega,\xi,\eta):\eta\in
E_2\}$ is the stable foliation of $W(\omega)$ at
$\xi+h(\omega,\xi)$.

Observe that
\begin{equation}
\begin{split}
&\hat{Y}^{\omega}(t,\xi,\eta)+Y^{\omega}(t,\xi+h(\omega,\xi))-Y^{\omega}(t,\xi+h(\omega,\xi))\\
&\quad\quad=\hat{Y}^{\omega}(t,\xi,\eta)\\
&\quad\quad=e^{Ct}(\eta+h(\omega,\xi)+\hat{h}(\omega,\xi,\eta)-\xi-h(\omega,\xi))\\
&\quad\quad\quad+\int_{0}^{t}e^{C(t-s)}\Big(F^{\omega}(s,\hat{Y}^{\omega}(s,\xi,\eta)+Y^{\omega}(s,\xi+h(\omega,\xi)))\\
&\quad\quad\quad\quad\quad\quad\quad\quad\quad-F^{\omega}(s,Y^{\omega}(s,\xi+h(\omega,\xi)))\Big)ds
\end{split}\label{4j}
\end{equation}
and
\begin{equation}
\begin{split}
&Y^{\omega}(t,\eta+h(\omega,\xi)+\hat{h}(\omega,\xi,\eta))-Y^{\omega}(t,\xi+h(\omega,\xi))\\
&\quad\quad=e^{Ct}(\eta+h(\omega,\xi)+\hat{h}(\omega,\xi,\eta)-\xi-h(\omega,\xi))\\
&\quad\quad\quad+\int_{0}^{t}e^{C(t-s)}\Big(F^{\omega}(s,Y^{\omega}(s,\eta+h(\omega,\xi)+\hat{h}(\omega,\xi,\eta)))\\
&\quad\quad\quad\quad\quad\quad\quad\quad\quad-F^{\omega}(s,Y^{\omega}(s,\xi+h(\omega,\xi)))\Big)ds.
\end{split}\label{4k}
\end{equation}
Comparing \eqref{4j} with \eqref{4k}, we find that
\begin{equation}\label{4l}
\hat{Y}^{\omega}(t,\xi,\eta)=Y^{\omega}(t,\eta+h(\omega,\xi)+\hat{h}(\omega,\xi,\eta))-Y^{\omega}(t,\xi+h(\omega,\xi)),\quad
t\geq0.
\end{equation}
In addition, if $\eta=0$, then by the uniqueness of solution of
\eqref{4h}, $\hat{Y}^{\omega}(t,\xi,0)\equiv0$ for $t\geq0$, which
associates with \eqref{4i} and \eqref{4l} show that
\begin{equation}\label{4m}
\sup_{t\geq0}e^{\gamma
t}\|Y^{\omega}(t,\eta+h(\omega,\xi)+\hat{h}(\omega,\xi,\eta))-Y^{\omega}(t,\xi+h(\omega,\xi))\|_{E}\leq
M\|\eta\|_E
\end{equation}
for any $\xi\in E_1$ and $\eta\in E_2$.

We now claim that $\omega\mapsto W(\omega)$ is the random
attractor of $Y$. Let $\omega\mapsto B(\omega)$ be any
pseudo-tempered random set in E. For any $\omega\mapsto
Y_0(\omega)\in\omega\mapsto B(\omega)$, there is
$\omega\mapsto\xi(\omega)\in E_1$ such that
\begin{equation*}
\begin{split}
Y_0(\theta_{-t}\omega)\in
W_s(\theta_{-t}\omega,\xi(\theta_{-t}\omega)+h(\theta_{-t}\omega,\xi(\theta_{-t}\omega))).
\end{split}
\end{equation*}
Let
$\eta(\theta_{-t}\omega)=QY_0(\theta_{-t}\omega)-h(\theta_{-t}\omega,\xi(\theta_{-t}\omega))$.
By \eqref{4e}, it is easy to see that
\begin{equation*}
\begin{split}
\sup_{Y_0(\theta_{-t}\omega)\in
B(\theta_{-t}\omega)}\|\eta(\theta_{-t}\omega)\|\leq\sup_{Y_0(\theta_{-t}\omega)\in
B(\theta_{-t}\omega)}\|QY_0(\theta_{-t}\omega)\|+\frac{1}{a-\epsilon}(a_1r(\omega)+r'(\omega))e^{\epsilon
t}+\frac{a_2}{a}.
\end{split}
\end{equation*}
It then follows from \eqref{4m} and the fact that $\omega\mapsto
QB(\omega)$ is tempered that
\begin{equation*}
\begin{split}
&\sup_{Y_0(\theta_{-t}\omega)\in
B(\theta_{-t}\omega)}\|Y(t,\theta_{-t}\omega,Y_0(\theta_{-t}\omega))-Y(t,\theta_{-t}\omega,\xi(\theta_{-t}\omega)+h(\theta_{-t}\omega,\xi(\theta_{-t}\omega)))\|_{E}\\
&\quad\quad\leq Me^{-\gamma t}\sup_{Y_0(\theta_{-t}\omega)\in
B(\theta_{-t}\omega)}\|\eta(\theta_{-t}\omega)\|_E\\
&\quad\quad\leq Me^{-\gamma t}\sup_{Y_0(\theta_{-t}\omega)\in
B(\theta_{-t}\omega)}\|QY_0(\theta_{-t}\omega)\|+\frac{M}{a-\epsilon}(a_1r(\omega)+r'(\omega))e^{(\epsilon-\gamma)t}+\frac{a_2M}{a}e^{-\gamma t}\\
&\quad\quad\rightarrow0\quad\text{as}\quad t\rightarrow\infty,
\end{split}
\end{equation*}
which associates with \eqref{4g} lead to
\begin{equation*}
\begin{split}
d_{H}(Y(t,\theta_{-t}\omega,B(\theta_{-t}\omega)),W(\omega))\rightarrow0\quad\text{as}\quad
t\rightarrow\infty.
\end{split}
\end{equation*}
Therefore, $A_0(\omega)=W(\omega)$ for $\omega\in\Omega$. Next, we
show that $\omega\mapsto A_0(\omega)$ is a random horizontal curve.
In fact, for some random horizontal curve
$\omega\mapsto\ell(\omega)$ in $E$, for example,
$\ell(\omega)\equiv\{(p,\Phi^{\omega}(p)):\Phi^{\omega}(p)=c, p\in
E_{1}\}$, $\omega\in\Omega$, where $c\in E_2$ is constant, it must
be contained in some pseudo-tempered random set, for example
$\omega\mapsto B_{2\|c\|_{E}}(\omega)$, where
$B_{2\|c\|_{E}}(\omega)$ is a pseudo-ball with radius $2\|c\|_{E}$.
Then, for $\omega\in\Omega$,
\begin{equation*}
\begin{split}
d_{H}(Y(t,\theta_{-t}\omega,\ell(\theta_{-t}\omega)),A_0(\omega))\rightarrow0\quad\text{as}\quad
t\rightarrow\infty,
\end{split}
\end{equation*}
which means that
$\lim_{t\rightarrow\infty}Y(t,\theta_{-t}\omega,\ell(\theta_{-t}\omega))\subset
A_0(\omega)$. Since $A_0(\omega)$ is one-dimensional, we have for
$\omega\in\Omega$,
\begin{equation*}
\begin{split}
A_0(\omega)=\lim_{t\rightarrow\infty}Y(t,\theta_{-t}\omega,\ell(\theta_{-t}\omega)).
\end{split}
\end{equation*}
It then follows from Lemma \ref{horizontal-curve-lm}
that $\omega\mapsto A_0(\omega)$ is a random horizontal curve.
\end{proof}

\begin{corollary}
\label{one-dimension-cor} Assume that $a>4L_{F}$ and that there is a
$\gamma\in(0,\frac{a}{2})$ such that \eqref{4c} holds. Then the
random attractor $\omega\mapsto A(\omega)$ of the random dynamical
system $\phi$ is a random horizontal curve.
\end{corollary}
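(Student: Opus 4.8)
The plan is to deduce the result directly from Theorem \ref{one-dimension-thm} by exploiting the fact that $\phi$ and $Y$ are conjugate through a random translation. From \eqref{2e} one reads off $\phi(t,\omega,\phi_0)=Y(t,\omega,\phi_0-\tilde{z}(\omega))+\tilde{z}(\theta_t\omega)$, with $\tilde{z}(\theta_t\omega)=(0,z(\theta_t\omega))^{\top}$, so $\phi(t,\omega,\cdot)$ is $Y(t,\omega,\cdot)$ precomposed with the translation $v\mapsto v-\tilde{z}(\omega)$ and postcomposed with $v\mapsto v+\tilde{z}(\theta_t\omega)$. Using the invariance $Y(t,\omega,A_0(\omega))=A_0(\theta_t\omega)$ of the attractor of $Y$, a one-line computation gives $\phi(t,\omega,A_0(\omega)+\tilde{z}(\omega))=Y(t,\omega,A_0(\omega))+\tilde{z}(\theta_t\omega)=A_0(\theta_t\omega)+\tilde{z}(\theta_t\omega)$, so the set $A_0(\omega)+\tilde{z}(\omega)$ is $\phi$-invariant; together with Corollary \ref{existence-attractor-cor} and Remark \ref{existence-attractor-rk}(2) this identifies $\omega\mapsto A(\omega)=A_0(\omega)+\tilde{z}(\omega)$ as the random attractor of $\phi$. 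Hence it suffices to prove that translating a random horizontal curve by the vector $\tilde{z}(\omega)$ again yields a random horizontal curve.

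By Theorem \ref{one-dimension-thm} I may write $A_0(\omega)=\{p+\Phi^{\omega}(p):p\in E_1\}$, where $\{\Phi^{\omega}\}$ obeys the Lipschitz and $p_0$-periodicity conditions of Definition \ref{horizontal-curve-def}. I then split the translation along $E=E_1\oplus E_2$ as $\tilde{z}(\omega)=P\tilde{z}(\omega)+Q\tilde{z}(\omega)$; since $\tilde{z}(\omega)=(0,z(\omega))^{\top}$ one computes $P\tilde{z}(\omega)=\alpha^{-1}\overline{z(\omega)}\,\eta_0\in E_1$ and $Q\tilde{z}(\omega)\in E_2$, both tempered random elements by Lemma \ref{tempered-lm1}. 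Reparametrizing by $q:=p+P\tilde{z}(\omega)\in E_1$ turns the translated set into a graph over $E_1$,
\begin{equation*}
A(\omega)=\{q+\widetilde{\Phi}^{\omega}(q):q\in E_1\},\qquad \widetilde{\Phi}^{\omega}(q):=\Phi^{\omega}\big(q-P\tilde{z}(\omega)\big)+Q\tilde{z}(\omega).
\end{equation*}

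It then remains only to verify the three requirements of Definition \ref{horizontal-curve-def} for $\widetilde{\Phi}^{\omega}$. The Lipschitz bound is immediate because the two constant shifts cancel in $\widetilde{\Phi}^{\omega}(q_1)-\widetilde{\Phi}^{\omega}(q_2)=\Phi^{\omega}(q_1-P\tilde{z}(\omega))-\Phi^{\omega}(q_2-P\tilde{z}(\omega))$, giving $\|\widetilde{\Phi}^{\omega}(q_1)-\widetilde{\Phi}^{\omega}(q_2)\|_{E}\leq\|q_1-q_2\|_{E}$; the $p_0$-periodicity follows the same way from $\Phi^{\omega}(q+p_0-P\tilde{z}(\omega))=\Phi^{\omega}(q-P\tilde{z}(\omega))$, since $p_0\in E_1$ and the $Q\tilde{z}(\omega)$-shift does not depend on $q$; and measurability of $\omega\mapsto A(\omega)$ is inherited from that of $\omega\mapsto A_0(\omega)$ and of the tempered, $\PP$-a.s.\ continuous variable $\omega\mapsto\tilde{z}(\omega)$. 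I expect no genuine obstacle here: the computation is an exercise in translation-invariance, and the only point deserving care is bookkeeping the $E_1$-component $P\tilde{z}(\omega)$, which acts as a random horizontal shift of the base $E_1$ and must be matched to the period $p_0$; because $P\tilde{z}(\omega)\in E_1$ this shift is harmless and the horizontal-curve structure is preserved.
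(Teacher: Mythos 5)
Your proposal is correct and follows essentially the same route as the paper, whose proof of this corollary is simply the citation of Corollary \ref{existence-attractor-cor}, Remark \ref{existence-attractor-rk}(2) and Theorem \ref{one-dimension-thm}; you have merely made explicit the translation argument the paper leaves implicit, including the correct identification $P\tilde{z}(\omega)=\alpha^{-1}\overline{z(\omega)}\,\eta_0$ (obtained from the unique decomposition of $(0,\bar{z})^{\top}$ along $E_1\oplus E_{-1}$) and the verification that a shift by $\tilde{z}(\omega)$ preserves the Lipschitz, periodicity and measurability requirements of Definition \ref{horizontal-curve-def}.
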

\begin{proof}
It follows from Corollary \ref{existence-attractor-cor},  Remark
\ref{existence-attractor-rk}
 and Theorem \ref{one-dimension-thm}.
\end{proof}

\begin{remark}
\label{one-dimension-rk}
At the beginning of this section, we assume that $a>4L_{F}$. Since
$a=\frac{\alpha}{2}-|\frac{\alpha}{2}-\frac{\delta\lambda_1}{\alpha}|$
and $L_{F}\leq\frac{2}{\alpha}$, we can take $\alpha$, $\lambda_1$
satisfying
$\frac{\alpha}{2}-\Big|\frac{\alpha}{2}-\frac{\delta\lambda_1}{\alpha}\Big|>\frac{8}{\alpha}$,
where $\lambda_1$ is the smallest positive eigenvalue of $A$ and its
value is determined by the diffusion coefficient $K$. On the other
hand, we need some $\gamma\in(0,\frac{a}{2})$ such that \eqref{4c}
holds. Note that
\begin{equation*}
\begin{split}
\min_{\gamma\in(0,\frac{a}{2})}\Bigg(\frac{1}{\gamma}+\frac{1}{a-2\gamma}\Bigg)=\Bigg(\frac{1}{\gamma}+\frac{1}{a-2\gamma}\Bigg)\Bigg|_{\gamma=\frac{(2-\sqrt{2})a}{2}}=\frac{\sqrt{2}}{(3\sqrt{2}-4)a},
\end{split}
\end{equation*}
which implies that there exist $\alpha$, $\lambda_1$ satisfying
\begin{equation}\label{4n}
\frac{\alpha}{2}-\Big|\frac{\alpha}{2}-\frac{\delta\lambda_1}{\alpha}\Big|>\frac{2\sqrt{2}}{(3\sqrt{2}-4)\alpha}>\frac{8}{\alpha}.
\end{equation}
Indeed, let $c=\frac{2\sqrt{2}}{3\sqrt{2}-4}$, then for any
$\alpha>\sqrt{2c}$ and $\lambda_1>c$, there is a $\delta>0$
satisfying
\begin{equation*}
\begin{split}
\frac{c}{\lambda_1}<\delta<\min\Big\{\frac{\alpha^2-c}{\lambda_1},1\Big\}
\end{split}
\end{equation*}
such that \eqref{4n} holds.
\end{remark}

\section{Rotation Number}

\qquad In this section, we study the phenomenon of frequency
locking, i.e., the existence of a rotation number of the stochastic
damped sine-Gordon equation \eqref{main-eq}-\eqref{main-bc}, which
characterizes the speed that the solution of
\eqref{main-eq}-\eqref{main-bc} moves around the one-dimensional
random attractor.

\begin{definition}
\label{rotation-number-def} The stochastic damped sine-Gordon
equation \eqref{main-eq} with boundary condition \eqref{main-bc} is
said to have a rotation number $\rho\in\mathbb{R}$ if, for
$\PP$-a.e. $\omega\in\Omega$ and each $\phi_0=(u_0,u_1)^{\top}\in
E$, the limit
$\lim_{t\rightarrow\infty}\frac{P\phi(t,\omega,\phi_0)}{t}$ exists
and
\begin{equation*}
\lim\limits_{t\rightarrow\infty}\frac{P\phi(t,\omega,\phi_0)}{t}=\rho\eta_0,
\end{equation*}
where $\eta_0=(1,0)^{\top}$ is the basis of $E_{1}$.
\end{definition}

We remark that the rotation number of \eqref{main-eq}-\eqref{main-bc} (if exists) is unique.
In fact, assume that $\rho_1$ and $\rho_2$ are rotation numbers of \eqref{main-eq}-\eqref{main-bc}.
Then there is $\omega\in\Omega$ such that for any $\phi_0\in E$,
$$\rho_1\eta_0=\lim_{t\to\infty}\frac{ P\phi(t,\omega,\phi_0)}{t}=\rho_2\eta_0.
$$
Therefore, $\rho_1=\rho_2$ and then the rotation number of \eqref{main-eq}-\eqref{main-bc} (if exists) is unique.

From \eqref{2e}, we have
\begin{equation}\label{5a}
\frac{P\phi(t,\omega,\phi_0)}{t}=\frac{PY(t,\omega,Y_0(\omega))}{t}+\frac{P(0,z(\theta_t\omega))^{\top}}{t},
\end{equation}
where $\phi_0=(u_0,u_1)^{\top}$ and
$Y_0(\omega)=(u_0,u_1-z(\omega))^{\top}$. By Lemma 2.1 in \cite{8},
it is easy to prove that
$\lim_{t\rightarrow\infty}\frac{P(0,z(\theta_t\omega))^{\top}}{t}=(0,0)^{\top}$.
Thus, it sufficient to prove the existence of the rotation number of
the random system (\ref{2c}).

By the random dynamical system $\mathbf{Y}$ defined in (\ref{3a}),
we define the corresponding skew-product semiflow
$\mathbf{\Theta}_t:\Omega\times\mathbf{E}\rightarrow\Omega\times\mathbf{E}$
for $t\geq0$ by setting
\begin{equation*}
\mathbf{\Theta}_t(\omega,\mathbf{Y_0})=(\theta_t\omega,\mathbf{Y}(t,\omega,\mathbf{Y_0})).
\end{equation*}
Obviously,
$(\Omega\times\mathbf{E},\,\mathcal{F}\times\mathcal{B},\,(\mathbf{\Theta}_t)_{t\geq0})$
is a measurable dynamical system, where
$\mathcal{B}=\mathcal{B}(\mathbf{E})$ is the Borel $\sigma$-algebra
of $\mathbf{E}$.

\begin{lemma}\label{invariant-measure-lm}
There is a measure $\mu$ on
$\Omega\times\mathbf{E}$ such that
$(\Omega\times\mathbf{E},\,\mathcal{F}\times\mathcal{B},\,\mu,\,(\mathbf{\Theta}_t)_{t\geq0})$
becomes an ergodic metric dynamical system.
\end{lemma}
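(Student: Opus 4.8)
The plan is to obtain $\mu$ as an ergodic invariant probability measure of the skew-product semiflow $(\mathbf{\Theta}_t)_{t\geq 0}$, concentrated on the random attractor $\omega\mapsto\mathbf{A_0}(\omega)$ produced in Theorem \ref{existence-attractor-thm}. Throughout I would work inside the class of probability measures on $\Omega\times\mathbf{E}$ whose marginal on $\Omega$ is the Wiener measure $\PP$; by disintegration such a measure has the form $\mu=\int_\Omega \delta_\omega\otimes\mu_\omega\,d\PP(\omega)$, and $\mathbf{\Theta}_t$-invariance is equivalent to the cocycle identity $\mathbf{Y}(t,\omega,\cdot)_*\mu_\omega=\mu_{\theta_t\omega}$ for $\PP$-a.e. $\omega$ and all $t\geq 0$. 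The task then splits into producing at least one such invariant measure and afterwards selecting an ergodic one.

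First I would establish existence by a Krylov--Bogolyubov argument adapted to the cocycle. Fixing a point $y_0\in\mathbf{E}$ and forming the time averages $\frac{1}{T}\int_0^T (\mathbf{\Theta}_t)_*(\PP\otimes\delta_{y_0})\,dt$, I would extract a weak-$*$ limit along a sequence $T_n\to\infty$. The decisive issue is tightness, which is where the infinite-dimensionality of $E_2$ enters, since $\mathbf{E}=\mathbb{T}^1\times E_2$ is not compact. This obstacle is removed by the compact random attractor: the fiber of $(\mathbf{\Theta}_t)_*(\PP\otimes\delta_{y_0})$ over $\omega$ is the Dirac mass at $\mathbf{Y}(t,\theta_{-t}\omega,y_0)$, which by Theorem \ref{existence-attractor-thm} approaches the compact set $\mathbf{A_0}(\omega)$ as $t\to\infty$, so all but an arbitrarily small fraction of the mass of every time-averaged measure is carried by a single compact set, uniformly in $T$. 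Since $\PP$ is tight on the Polish space $\Omega$, the averaged measures are tight, and any weak-$*$ limit $\mu$ is $\mathbf{\Theta}_t$-invariant with $\Omega$-marginal $\PP$ because each $\theta_t$ preserves $\PP$. Equivalently, one may first build an invariant measure $\rho$ for the Markov transition semigroup $(P_t\psi)(y)=\int_\Omega\psi(\mathbf{Y}(t,\omega,y))\,d\PP(\omega)$ on $\mathbf{E}$ and then lift it by the pullback limit $\mu_\omega=\lim_{t\to\infty}\mathbf{Y}(t,\theta_{-t}\omega,\cdot)_*\rho$, whose existence is guaranteed by the attraction property; see \cite{1}.

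Next I would upgrade invariance to ergodicity. Let $\mathcal{I}_\PP$ be the set of $\mathbf{\Theta}_t$-invariant probability measures on $\Omega\times\mathbf{E}$ with $\Omega$-marginal $\PP$; by the previous step it is nonempty, and by the same attractor-based tightness, together with the closedness of the invariance constraint under weak-$*$ limits, it is convex and weak-$*$ compact. By the Krein--Milman theorem $\mathcal{I}_\PP$ has an extreme point $\mu$, and it remains to check that an extreme point of $\mathcal{I}_\PP$ is ergodic for $(\mathbf{\Theta}_t)$. Suppose it were not; then there is a $\mathbf{\Theta}_t$-invariant set $S\subset\Omega\times\mathbf{E}$ with $0<\mu(S)<1$. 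Here the hypothesis that the base $(\Omega,\mathcal{F},\PP,(\theta_t)_{t\in\mathbb{R}})$ is ergodic is essential: the $\Omega$-projection of $S$ is $\theta_t$-invariant and hence has $\PP$-measure $0$ or $1$, which forces both conditional measures $\mu(\cdot\mid S)$ and $\mu(\cdot\mid S^c)$ to retain $\Omega$-marginal $\PP$. These lie in $\mathcal{I}_\PP$ and display $\mu$ as a nontrivial convex combination, contradicting extremality. Hence $\mu$ is ergodic, and $(\Omega\times\mathbf{E},\mathcal{F}\times\mathcal{B},\mu,(\mathbf{\Theta}_t)_{t\geq0})$ is an ergodic metric dynamical system.

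The step I expect to be most delicate is the tightness/weak-$*$ compactness underlying both the Krylov--Bogolyubov construction and the compactness of $\mathcal{I}_\PP$: because $E_2$ is infinite-dimensional, no compactness is available a priori and it must be imported entirely from the compact random attractor of Theorem \ref{existence-attractor-thm}, with care that the concentration on compact sets is uniform in the averaging parameter and compatible with the measurable $\omega$-dependence of the fibers $\mathbf{A_0}(\omega)$. By contrast, the passage from extremality to ergodicity is structurally routine once ergodicity of $\PP$ is used to control the $\Omega$-marginals of the conditional measures.
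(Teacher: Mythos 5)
Your proposal follows essentially the same route as the paper's proof, which is itself only a pair of citations: the paper works with the isomorphism between random measures on $\mathbf{E}$ and measures on $\Omega\times\mathbf{E}$ with marginal $\PP$ (Propositions 3.3 and 3.6 of Crauel \cite{6}, your disintegration step), obtains a nonempty convex set $\Gamma$ of invariant random measures supported on the attractor (Corollary 6.13 of \cite{6}, whose proof is exactly the Krylov--Bogolyubov/pullback construction you sketch, with compactness imported from $\mathbf{A_0}$), and then selects an extremal point, which is ergodic by Lemma 6.19 of \cite{6} (your Krein--Milman step). So the difference is not of route but of level of detail: you reconstruct the two results the paper outsources.

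Two points in your reconstruction need repair. First, in the ergodicity step, the inference ``the $\Omega$-projection of $S$ is $\theta_t$-invariant, hence of measure $0$ or $1$, which forces the conditional measures to retain marginal $\PP$'' is not valid as stated: the projection of $S$ having full measure says nothing about the marginal of $\mu(\cdot\mid S)$, which is $\mu(S)^{-1}\mu_\omega(S_\omega)\,d\PP(\omega)$ where $S_\omega$ is the $\omega$-section. What you actually need is that $\omega\mapsto\mu_\omega(S_\omega)$ is $\PP$-a.s. constant; this follows because invariance of $\mu$ and of $S$ gives $\mu_{\theta_t\omega}(S_{\theta_t\omega})=\mu_\omega\big(\mathbf{Y}(t,\omega,\cdot)^{-1}S_{\theta_t\omega}\big)=\mu_\omega(S_\omega)$ a.s., so the fiber-mass function is $\theta_t$-invariant and base ergodicity applies --- to that function, not to the projection of $S$. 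Second, weak-$*$ compactness of the full set $\mathcal{I}_{\PP}$ of invariant measures with marginal $\PP$ does not follow ``by the same attractor-based tightness'' unless you first know that \emph{every} invariant measure is carried by $\mathbf{A_0}(\omega)$ --- a true but nontrivial support theorem; relatedly, in infinite dimensions a neighborhood of a compact set is not compact, so your uniform-in-$T$ concentration claim needs the narrow-topology compactness of random measures supported by a compact random set rather than a naive Prokhorov argument. The paper sidesteps both issues by taking extreme points only of the subset $\Gamma$ of invariant random measures with $\mu_\omega(\mathbf{A_0}(\omega))=1$ a.s., for which convexity and compactness are directly available. Restricting to $\Gamma$ as the paper does, or invoking the support theorem explicitly, closes the gap; with these repairs your argument is correct.
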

\begin{proof}
Let $Pr_{\Omega}(\mathbf{E})$ be the set of all random probability
measures on $\mathbf{E}$ and $Pr_{\PP}(\Omega\times\mathbf{E})$ be
the set of all probability measures on $\Omega\times\mathbf{E}$ with
marginal $\PP$. We know from Proposition 3.3 and Proposition 3.6 in
\cite{6} that $Pr_{\Omega}(\mathbf{E})$ and
$Pr_{\PP}(\Omega\times\mathbf{E})$ are isomorphism. Moreover, both
$Pr_{\Omega}(\mathbf{E})$ and $Pr_{\PP}(\Omega\times\mathbf{E})$ are
convex, and the convex structure is preserved by this isomorphism.

Let $\Gamma=\{\omega\mapsto\mu_{\omega}\in
Pr_{\Omega}(\mathbf{E}):\PP\text{-a.s.}
\mu_{\omega}(\mathbf{A_0}(\omega))=1,\,\,\omega\mapsto\mu_{\omega}\,\,
\text{is invariant for}\,\,\mathbf{Y}\}$. Clearly, $\Gamma$ is
convex. Since $\omega\mapsto\mathbf{A_0}(\omega)$ is the random
attractor of $\mathbf{Y}$, we obtain from Corollary 6.13 in \cite{6}
that $\Gamma\neq\emptyset$. Let $\omega\mapsto\mu_{\omega}$ be an
extremal point of $\Gamma$. Then, by the isomorphism between
$Pr_{\Omega}(\mathbf{E})$ and $Pr_{\PP}(\Omega\times\mathbf{E})$ and
Lemma 6.19 in \cite{6}, the corresponding measure $\mu$ on
$\Omega\times\mathbf{E}$ of $\omega\mapsto\mu_{\omega}$ is
$(\mathbf{\Theta}_t)_{t\geq0}$-invariant and ergodic. Thus,
$(\Omega\times\mathbf{E},\,\mathcal{F}\times\mathcal{B},\,\mu,\,(\mathbf{\Theta}_t)_{t\geq0})$
is an ergodic metric dynamical system.
\end{proof}

We next show a simple lemma which will be used. For any
$p_i=(s_i,0)^{\top}\in E_1$, $i=1,2$, we define
\begin{equation*}
p_1\leq p_2\quad\text{if}\quad s_1\leq s_2.
\end{equation*}
Then we have
\begin{lemma}\label{orientation-preserving}
Suppose that $a>4L_F$. Let $\ell$ be any deterministic
$np_0$-periodic horizontal curve ($\ell$ satisfies the Lipschitz and
periodic condition in Definition \ref{horizontal-curve-def}). For
any $Y_1,\,\,Y_2\in\ell$ with $PY_1\leq PY_2$, there holds
\begin{equation}\label{orientation inequality}
PY(t,\omega,Y_1)\leq PY(t,\omega,Y_2)\quad\text{for}\,\,
t>0,\,\,\omega\in\Omega.
\end{equation}
\end{lemma}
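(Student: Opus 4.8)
The plan is to reduce the vector inequality \eqref{orientation inequality} to a scalar sign condition and then rule out sign changes by a continuity argument. Fix $\omega\in\Omega$, write $Y_i(t)=Y(t,\omega,Y_i)$ for $i=1,2$, and set $p(t)=P(Y_1(t)-Y_2(t))\in E_1$. Since $E_1=\mathrm{span}\{\eta_0\}$ we may write $p(t)=s(t)\eta_0$ for a scalar function $s(t)$, which is continuous in $t$ because $Y$ is continuous in $t$; the conclusion \eqref{orientation inequality} is precisely the statement $s(t)\le 0$ for $t>0$. At $t=0$ we have $s(0)\eta_0=PY_1-PY_2$, so the hypothesis $PY_1\le PY_2$ gives $s(0)\le 0$.

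First I would dispose of the degenerate case $PY_1=PY_2$: since the horizontal curve $\ell$ is the graph of a single-valued map over $E_1$ (Definition \ref{horizontal-curve-def}), equal $E_1$-projections force the initial points to coincide, $Y_1=Y_2$, and \eqref{orientation inequality} holds trivially with equality. It remains to treat $PY_1<PY_2$, i.e.\ $s(0)<0$ and $Y_1\ne Y_2$. The heart of the argument is to show that $s(t)\ne 0$ for all $t\ge 0$. Because $a>4L_F$, Lemma \ref{horizontal-curve-lm} applies, and its proof furnishes the cone estimate $\|Q(Y_1(t)-Y_2(t))\|_E\le\|p(t)\|_E$ for all $t\ge0$. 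Feeding this into the lower bound $\frac{d}{dt}\|p(t)\|_E^2\ge-2L_F\big(\|p(t)\|_E^2+\|p(t)\|_E\,\|Q(Y_1(t)-Y_2(t))\|_E\big)$, which comes from \eqref{4a}, yields $\frac{d}{dt}\|p(t)\|_E^2\ge-4L_F\|p(t)\|_E^2$, and hence $\|p(t)\|_E^2\ge\|p(0)\|_E^2\,e^{-4L_Ft}>0$ by Gronwall, the positivity of the right-hand side following from $p(0)=(s_1-s_2)\eta_0\ne 0$. Since $\|p(t)\|_E=|s(t)|\,\|\eta_0\|_E$, this forces $s(t)\ne 0$ for every $t\ge0$.

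To finish, I would invoke the intermediate value theorem: $s$ is continuous, $s(0)<0$, and $s$ never vanishes, so $s$ cannot cross zero and $s(t)<0$ for all $t\ge0$, which is exactly \eqref{orientation inequality} (indeed with strict inequality). The only substantive step is the non-vanishing of $s$, and the obstacle there is entirely absorbed by Lemma \ref{horizontal-curve-lm}: it is the persistence of the Lipschitz cone structure under the flow of \eqref{2c}, together with the uniqueness of solutions, that prevents the two trajectories from sharing an $E_1$-projection without coinciding. Everything after that is a one-line continuity argument.
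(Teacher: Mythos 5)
Your proof is correct, and it reaches the conclusion by a genuinely different mechanism than the paper's. The paper argues by contradiction: if the order were ever violated, continuity of $Y$ in $t$ would produce a time $t_0>0$ at which $PY(t_0,\omega,Y_1)=PY(t_0,\omega,Y_2)$; since both points lie on $Y(t_0,\omega,\ell)$, which by Lemma \ref{horizontal-curve-lm} is again a deterministic $np_0$-periodic horizontal curve and hence a $1$-Lipschitz graph over $E_1$, equal projections force $Y(t_0,\omega,Y_1)=Y(t_0,\omega,Y_2)$, ``which leads to a contradiction''---implicitly a contradiction with injectivity (backward uniqueness) of the solution map $Y(t_0,\omega,\cdot)$, available here because $C$ generates a $C_0$-group and $F^{\omega}$ is globally Lipschitz, but never spelled out in the paper. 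You invoke the same Lemma \ref{horizontal-curve-lm} but extract a different consequence: the cone estimate $\|q(t)\|_E\le\|p(t)\|_E$ along the evolved curve (this in fact follows from the \emph{statement} of the lemma, not only its proof, since $Y_1(t)$ and $Y_2(t)$ lie on the evolved $1$-Lipschitz graph), which combined with the differential inequality from \eqref{4a} yields $\frac{d}{dt}\|p(t)\|_E^2\ge-4L_F\|p(t)\|_E^2$ and hence $\|p(t)\|_E\ge e^{-2L_Ft}\|p(0)\|_E>0$ by Gronwall; the projections can then never meet, and continuity freezes the sign of $s(t)$. A further small advantage of your route: no density-in-$D(C)$ argument is needed for the $P$-component, since by Lemma \ref{projection-lm}(3) one has $p(t)=p(0)+\int_0^t P\bigl(F(\theta_s\omega,Y_1(s))-F(\theta_s\omega,Y_2(s))\bigr)ds$, which is $C^1$ for arbitrary initial data in $E$. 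What your approach buys is a quantitative, explicit exponential lower bound on the gap (hence strict order preservation) and complete avoidance of backward uniqueness, the one ingredient the paper's terse contradiction leaves unjustified; the cost is essentially nil, since both arguments already rest on Lemma \ref{horizontal-curve-lm}. The only slightly misleading sentence in your write-up is the closing attribution of the non-vanishing of $s$ partly to ``uniqueness of solutions''---your own argument never uses it.
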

\begin{proof}
Clearly, if $PY_1=PY_2$, then \eqref{orientation inequality} holds.
We now prove that \eqref{orientation inequality} holds for
$PY_1<PY_2$. If not, then by the continuity of $Y$ with respect to
$t$, there is a $t_0>0$ such that
$PY(t_0,\omega,Y_1)=PY(t_0,\omega,Y_2)$, which implies that
$Y(t_0,\omega,Y_1)=Y(t_0,\omega,Y_2)$ since $Y(t_0,\omega,Y_1)$ and
$Y(t_0,\omega,Y_2)$ belong to the same deterministic $np_0$-periodic
horizontal curve $Y(t_0,\omega,\ell)$, which leads to a
contradiction. The lemma is thus proved.
\end{proof}

We now show the main result in this section.
\begin{theorem}\label{existence-rotation-number-thm}
Assume that $a>4L_{F}$. Then the
rotation number of \eqref{2c} exists.
\end{theorem}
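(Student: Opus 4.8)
The plan is to express $PY(t,\omega,Y_0)$ as a time-average of a bounded ``instantaneous displacement'' over the ergodic system of Lemma~\ref{invariant-measure-lm}, to read off the rotation number as the space-average of that displacement for $\mu$-a.e.\ initial datum, and then to promote this to \emph{every} initial datum and $\PP$-a.e.\ $\omega$ by using one-dimensionality of the attractor together with Lemma~\ref{orientation-preserving} and the exponential attraction \eqref{4m}. Since $e^{Ct}P=P$ (Lemma~\ref{projection-lm}), projecting \eqref{2d} onto $E_1=\mathrm{span}\{\eta_0\}$ gives
\begin{equation*}
PY(t,\omega,Y_0)=PY_0+\int_0^t PF(\theta_s\omega,Y(s,\omega,Y_0))\,ds .
\end{equation*}
Because $F$ is $p_0$-periodic in $Y$ (see the proof of Lemma~\ref{periodicity-lm}), the map $G(\omega,\mathbf{Y_0}):=PF(\omega,Y_0)$ descends to a function on $\Omega\times\mathbf{E}$; writing $G=g\,\eta_0$ with $g$ scalar, the projection onto $E_1$ kills all gradient contributions and leaves only spatial averages, so $|g|\le C(1+\|z(\omega)\|)$ and hence $g\in L^1(\mu)$, as $\mu$ has marginal $\PP$ and $z(\omega)$ is integrable.

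The decisive identity is that, by the cocycle property, $PF(\theta_s\omega,Y(s,\omega,Y_0))=g(\mathbf{\Theta}_s(\omega,\mathbf{Y_0}))\,\eta_0$, whence
\begin{equation*}
\frac{PY(t,\omega,Y_0)}{t}=\frac{PY_0}{t}+\Bigl(\frac1t\int_0^t g(\mathbf{\Theta}_s(\omega,\mathbf{Y_0}))\,ds\Bigr)\eta_0 .
\end{equation*}
Applying the Birkhoff ergodic theorem for the ergodic semiflow $(\mathbf{\Theta}_t)_{t\ge0}$ of Lemma~\ref{invariant-measure-lm} gives, for $\mu$-a.e.\ $(\omega,\mathbf{Y_0})$,
\begin{equation*}
\lim_{t\to\infty}\frac{PY(t,\omega,Y_0)}{t}=\rho\,\eta_0,\qquad \rho:=\int_{\Omega\times\mathbf{E}}g\,d\mu .
\end{equation*}
Disintegrating $\mu$ over $\PP$ and recalling $\mu_\omega(\mathbf{A_0}(\omega))=1$, I conclude that for $\PP$-a.e.\ $\omega$ there is at least one point $Y_0^{*}\in A_0(\omega)$ at which this continuous-time limit equals $\rho\eta_0$.

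It remains to remove the dependence on the initial datum. Fix such an $\omega$ and base point $Y_0^{*}$, and let $Y_1\in A_0(\omega)$ be arbitrary; choosing its representative with $PY_0^{*}\le PY_1\le PY_0^{*}+p_0$ and using that $A_0(\omega)$ is a horizontal curve (hence a $p_0$-periodic set, with $PY(t,\omega,Y_0^{*}+p_0)=PY(t,\omega,Y_0^{*})+p_0$ by Lemma~\ref{periodicity-lm}), Lemma~\ref{orientation-preserving} applied twice on $A_0(\omega)$ yields the squeeze
\begin{equation*}
PY(t,\omega,Y_0^{*})\le PY(t,\omega,Y_1)\le PY(t,\omega,Y_0^{*})+p_0 ,\qquad t>0,
\end{equation*}
so $t^{-1}PY(t,\omega,Y_1)\to\rho\eta_0$ as well. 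For a general $Y_0\in E$ I invoke the stable foliation estimate \eqref{4m}: $Y_0$ lies on a stable fiber over a base point $Y_0^{*}\in A_0(\omega)=W(\omega)$ with $\|Y(t,\omega,Y_0)-Y(t,\omega,Y_0^{*})\|_E\le Me^{-\gamma t}\|\eta\|_E$, so $t^{-1}\|PY(t,\omega,Y_0)-PY(t,\omega,Y_0^{*})\|_E\to0$ and the limit for $Y_0$ is again $\rho\eta_0$. This establishes $\lim_{t\to\infty}t^{-1}PY(t,\omega,Y_0)=\rho\eta_0$ for all $Y_0$ and $\PP$-a.e.\ $\omega$, i.e.\ the rotation number of \eqref{2c}; combined with \eqref{5a} it also yields the rotation number of \eqref{main-eq}-\eqref{main-bc}. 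The main obstacle is precisely this last passage: Birkhoff's theorem delivers only a $\mu$-a.e.\ statement, and turning it into a limit valid for \emph{all} $Y_0$ and $\PP$-a.e.\ $\omega$ is possible only because the attractor is one-dimensional, the flow on it is order-preserving (Lemma~\ref{orientation-preserving}), and the stable foliation \eqref{4m} contracts every other orbit onto it.
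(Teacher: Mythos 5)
Your first half is exactly the paper's argument: the projection identity via $e^{Ct}P=P$, the descent of $PF$ to the quotient $\Omega\times\mathbf{E}$ by $p_0$-periodicity, integrability with respect to $\mu$, and the Birkhoff ergodic theorem for the skew-product $(\mathbf{\Theta}_t)$ of Lemma \ref{invariant-measure-lm}, yielding a deterministic $\rho$ and a $\mu$-a.e.\ limit. Where you diverge is the promotion from ``$\mu$-a.e.\ $(\omega,Y_0)$'' to ``all $Y_0$, $\PP$-a.e.\ $\omega$'': the paper does this with only Lemma \ref{periodicity-lm} and Lemma \ref{orientation-preserving}, by choosing $n_0(\omega)$ with $PY_0^{*}(\omega)-n_0p_0\le PY_0\le PY_0^{*}(\omega)+n_0p_0$, placing the three points $Y_0^{*}\pm n_0p_0$, $Y_0$ on a common $n_0p_0$-periodic horizontal curve, and squeezing $PY(t,\omega,Y_0)$ between $PY(t,\omega,Y_0^{*})\pm n_0p_0$, whose limits are $\rho\eta_0$ by \eqref{5c}. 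You instead route through the attractor-as-horizontal-curve and the stable foliation.

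This creates a genuine gap relative to the theorem as stated. The estimate \eqref{4m} with finite $M=\bigl(1-\tfrac{2}{\alpha}\bigl(\tfrac{1}{\gamma}+\tfrac{1}{a-2\gamma}\bigr)\bigr)^{-1}$, the existence of the invariant manifold $W(\omega)$, its identification with $A_0(\omega)$, and the horizontal-curve structure of $A_0(\omega)$ (Theorem \ref{one-dimension-thm}) all require, beyond $a>4L_F$, the existence of $\gamma\in(0,\tfrac{a}{2})$ satisfying \eqref{4c}: the fixed-point constructions from \cite{4} underlying $h(\omega,\xi)$ and $\hat h(\omega,\xi,\eta)$ are contractions only when $L_F\bigl(\tfrac{1}{\gamma}+\tfrac{1}{a-2\gamma}\bigr)<1$. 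By Remark \ref{one-dimension-rk}, after optimizing over $\gamma$, \eqref{4c} amounts to $a>\frac{2\sqrt{2}}{(3\sqrt{2}-4)\alpha}\approx\frac{11.7}{\alpha}$, which does \emph{not} follow from $a>4L_F$: when $L_F$ equals its bound $\tfrac{2}{\alpha}$, any $a$ with $\tfrac{8}{\alpha}<a\le\frac{2\sqrt{2}}{(3\sqrt{2}-4)\alpha}$ satisfies the theorem's hypothesis but not \eqref{4c}, and $L_F$ may be smaller still. So your argument proves the theorem only under the stronger hypotheses of Theorem \ref{one-dimension-thm}/Corollary \ref{one-dimension-cor}, not under $a>4L_F$ alone. (Within the hypotheses of section 5 your two steps --- the squeeze along $A_0(\omega)$ between $Y_0^{*}$ and $Y_0^{*}+p_0$, and the transfer via $t^{-1}\lVert PY(t,\omega,Y_0)-PY(t,\omega,\xi+h(\omega,\xi))\rVert_E\to 0$ --- are sound, and your disintegration step even sharpens the paper's, which only takes $Y_0^{*}(\omega)\in E$ rather than on the attractor.) The repair is to replace the attractor/foliation machinery by the paper's elementary squeeze: it needs only the order-preservation of Lemma \ref{orientation-preserving}, which is available under $a>4L_F$.
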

\begin{proof}
Note that
\begin{equation*}
\frac{PY(t,\omega,Y_0)}{t}=\frac{PY_0}{t}+\frac{1}{t}\int_{0}^{t}PF(\theta_s\omega,Y(s,\omega,Y_0))ds.
\end{equation*}
Since
$F(\theta_s\omega,Y(s,\omega,Y_0)+kp_0)=F(\theta_s\omega,Y(s,\omega,Y_0)),\,\,\forall
k\in\mathbb{Z}$, we can identify
$F(\theta_s\omega,\mathbf{Y}(s,\omega,\mathbf{Y_0}))$ with
$F(\theta_s\omega,Y(s,\omega,Y_0))$. Precisely, define
$h:E\rightarrow\mathcal{E}$, $Y\mapsto\{Y\}$, where $\mathcal{E}$ is
the collection of all singleton sets of $E$, i.e.
$\mathcal{E}=\{\{Y\}:Y\in E\}$ (see Remark \ref{interpret-of-E} for
more details of the space $\mathcal{E}$). Clearly, $h$ is a
homeomorphism from $E$ to $\mathcal{E}$. Then,
\begin{equation*}
F(\theta_s\omega,Y(s,\omega,Y_0))=h^{-1}(F(\theta_s\omega,\mathbf{Y}(s,\omega,\mathbf{Y_0}))).
\end{equation*}
Thus,
\begin{equation}
\begin{split}
\frac{PY(t,\omega,Y_0)}{t}&=\frac{PY_0}{t}+\frac{1}{t}\int_{0}^{t}Ph^{-1}(F(\theta_s\omega,\mathbf{Y}(s,\omega,\mathbf{Y_0})))ds\\
&=\frac{PY_0}{t}+\frac{1}{t}\int_{0}^{t}\mathbf{F}(\mathbf{\Theta}_s(\omega,\mathbf{Y_0}))ds.\\
\end{split}\label{5b}
\end{equation}
where $\mathbf{F}=P\circ h^{-1}\circ F\in
L^{1}(\Omega\times\mathbf{E},\,\mathcal{F}\times\mathcal{B},\,\mu)$.
Let $t\rightarrow\infty$ in (\ref{5b}),
$\lim_{t\rightarrow\infty}\frac{PY_0}{t}=(0,0)^{\top}$ and by Lemma
5.2 and Ergodic Theorems in \cite{1}, there exist a constant
$\rho\in\mathbb{R}$ such that
\begin{equation*}
\lim_{t\rightarrow\infty}\frac{1}{t}\int_{0}^{t}\mathbf{F}(\mathbf{\Theta}_s(\omega,\mathbf{Y_0}))ds=\rho\eta_0,
\end{equation*}
which means
\begin{equation*}
\lim_{t\rightarrow\infty}\frac{PY(t,\omega,Y_0)}{t}=\rho\eta_0
\end{equation*}
for $\mu$-a.e.$(\omega,Y_0)\in\Omega\times E$.  Thus, there is
$\Omega^*\subset\Omega$ with $\PP(\Omega^*)=1$ such that for any
$\omega\in\Omega^*$,
 there is $Y_0^{*}(\omega)\in E$ such that
\begin{equation*}
\lim_{t\rightarrow\infty}\frac{PY(t,\omega,Y_0^{*}(\omega))}{t}=\rho\eta_0.
\end{equation*}
By Lemma
\ref{periodicity-lm}, we have that for any $n\in\mathbb{N}$ and $\omega\in\Omega^*$,
\begin{equation}\label{5c}
\lim_{t\rightarrow\infty}\frac{PY(t,\omega,Y^{*}_0(\omega)\pm
np_0)}{t}=\lim_{t\rightarrow\infty}\frac{PY(t,\omega,Y^{*}_0(\omega))\pm
np_0}{t}=\rho\eta_0.
\end{equation}

Now for any $\omega\in\Omega^*$ and any $Y_0\in E$, there is
$n_0(\omega)\in\mathbb{N}$ such that
\begin{equation*}
PY^{*}_0(\omega)-n_0(\omega)p_0\leq PY_0\leq
PY^{*}_0(\omega)+n_0(\omega)p_0
\end{equation*}
and there is a $n_0(\omega)p_0$-periodic horizontal curve $l_0(
\omega)$ such that $Y^*_0(\omega)-n_0(\omega)p_0$, $Y_0$, $Y^*_0(\omega)+n_0(\omega)p_0\in l_0(\omega)$.
Then by Lemma \ref{orientation-preserving}, we have
\begin{equation*}
PY(t,\omega,Y^{*}_0(\omega)-n_0(\omega)p_0)\leq
PY(t,\omega,Y_0)\leq
PY(t,\omega,Y^{*}_0(\omega)+n_0(\omega)p_0),
\end{equation*}
which together with \eqref{5c} implies that for  any
$\omega\in\Omega^*$ and any $Y_0\in E$,
\begin{equation*}
\lim_{t\rightarrow\infty}\frac{PY(t,\omega,Y_0)}{t}=\rho\eta_0.
\end{equation*}
Consequently, for any a.e. $\omega\in\Omega$ and any $Y_0\in E$,
\begin{equation*}
\lim_{t\rightarrow\infty}\frac{PY(t,\omega,Y_0)}{t}=\rho\eta_0.
\end{equation*}
The theorem is thus proved.
\end{proof}

\begin{corollary}\label{existence-rotation-number-cor}
Assume that $a>4L_{F}$. Then the rotation number of the stochastic
damped sine-Gordon equation \eqref{main-eq} with the boundary
condition \eqref{main-bc} exists.
\end{corollary}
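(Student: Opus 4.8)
The plan is to obtain the corollary as an immediate consequence of Theorem \ref{existence-rotation-number-thm} by transferring the rotation number of the abstract system \eqref{2c} back to the random dynamical system $\phi$ generated by \eqref{main-eq}-\eqref{main-bc}. The bridge between the two is already recorded in \eqref{2e} and \eqref{5a}: for $\phi_0=(u_0,u_1)^{\top}$ and the associated $Y_0(\omega)=(u_0,u_1-z(\omega))^{\top}$ one has $\tfrac{P\phi(t,\omega,\phi_0)}{t}=\tfrac{PY(t,\omega,Y_0(\omega))}{t}+\tfrac{P(0,z(\theta_t\omega))^{\top}}{t}$, so it suffices to control the two summands separately.

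First I would invoke the sublinear growth of the Ornstein--Uhlenbeck term: by Lemma 2.1 in \cite{8} (as already noted just below \eqref{5a}), $\lim_{t\to\infty}\tfrac{P(0,z(\theta_t\omega))^{\top}}{t}=(0,0)^{\top}$ for $\PP$-a.e. $\omega$. Since $a>4L_F$ by hypothesis, Theorem \ref{existence-rotation-number-thm} applies and furnishes a constant $\rho\in\RR$ together with a full-measure set of $\omega$ on which $\lim_{t\to\infty}\tfrac{PY(t,\omega,Y_0)}{t}=\rho\eta_0$ for \emph{every} $Y_0\in E$. For each fixed $\omega$ the assignment $\phi_0\mapsto Y_0(\omega)$ is a bijection of $E$, so this limit in particular holds for the $Y_0(\omega)$ attached to any prescribed $\phi_0$.

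Combining the two limits on the intersection of the two full-measure sets yields $\lim_{t\to\infty}\tfrac{P\phi(t,\omega,\phi_0)}{t}=\rho\eta_0$ for $\PP$-a.e. $\omega$ and every $\phi_0\in E$, which is exactly the condition in Definition \ref{rotation-number-def} for $\rho$ to be the rotation number of \eqref{main-eq}-\eqref{main-bc}. I would close by recalling that uniqueness of $\rho$ has already been established in the discussion following Definition \ref{rotation-number-def}.

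The argument is essentially bookkeeping, so I do not expect a genuine obstacle; the only two substantive points are handled upstream, namely the uniformity of the limit over all initial data $Y_0$ (supplied by Theorem \ref{existence-rotation-number-thm}, which in turn leans on the order-preserving Lemma \ref{orientation-preserving} and the ergodic invariant measure from Lemma \ref{invariant-measure-lm}) and the sublinearity of the additive-noise correction (supplied by \cite{8}). The one subtlety worth flagging is simply to intersect the two exceptional null sets so that both limits hold simultaneously for a.e. $\omega$.
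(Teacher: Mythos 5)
Your proposal is correct and matches the paper's own argument, which derives the corollary in one line from the decomposition \eqref{5a}, the sublinearity $\lim_{t\to\infty}P(0,z(\theta_t\omega))^{\top}/t=(0,0)^{\top}$ (Lemma 2.1 of \cite{8}), and Theorem \ref{existence-rotation-number-thm}. Your added care in noting that $\phi_0\mapsto Y_0(\omega)$ is a bijection for each fixed $\omega$ and in intersecting the two null sets simply makes explicit what the paper leaves implicit.
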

\begin{proof}
It follows from \eqref{5a} and Theorem
\ref{existence-rotation-number-thm}.
\end{proof}

\begin{remark}\label{interpret-of-E}
We first note that the space $\mathcal{E}=\{\{Y\}:Y\in E\}$ in the
proof of Theorem \ref{existence-rotation-number-thm} is a linear
space according to the linear structure defined by
\begin{equation*}
\begin{split}
\alpha\{X\}+\beta\{Y\}=\{\alpha X+\beta
Y\},\quad\text{for}\,\,\alpha,\beta\in\mathbb{R},\,\,\{X\},\{Y\}\in\mathcal{E}.
\end{split}
\end{equation*}
Also, for $\{X\},\{Y\}\in\mathcal{E}$, we define
\begin{equation}\label{inner-on-E}
\begin{split}
\langle\{X\},\{Y\}\rangle_{\mathcal{E}}=\langle X,Y\rangle_{E}.
\end{split}
\end{equation}
It is easy to verify that the functional
$\langle\cdot,\cdot\rangle_{\mathcal{E}}:\mathcal{E}\times\mathcal{E}\rightarrow\mathbb{R}$
defined by \eqref{inner-on-E} is bilinear, symmetric and positive,
thus defining the scalar product in $\mathcal{E}$ over $\mathbb{R}$.
Moreover, the completeness of $\mathcal{E}$ is from the completeness
of $E$. Hence, $\mathcal{E}$ is a Hilbert space.
\end{remark}

\begin{remark} In the proof of Theorem \ref{existence-rotation-number-thm}, we used an ergodic invariant measure
$\mu$ of
$(\Omega\times\mathbf{E},\,\mathcal{F}\times\mathcal{B},\,\mu,\,(\mathbf{\Theta}_t)_{t\geq0})$.
It should be pointed out that
 the measure $\mu$ on $\Omega\times\mathbf{E}$ that makes
$(\Omega\times\mathbf{E},\,\mathcal{F}\times\mathcal{B},\,\mu,\,(\mathbf{\Theta}_t)_{t\geq0})$
becomes an ergodic metric dynamical system may not be unique, because
the convex set $\Gamma$ in the proof of Lemma
\ref{invariant-measure-lm} may have more than one extremal points.
However, as mentioned above, the rotation number in Theorem
\ref{existence-rotation-number-thm} and Corollary
\ref{existence-rotation-number-cor} are independent of  $\mu$ and are unique.
\end{remark}

\vskip 0.5cm

\noindent \textbf{Acknowledgement}. We would like to thank the
referee for carefully reading the manuscript and making very useful
suggestions.

\end{document}